\documentclass[12pt,pdftex,a4paper]{amsart}

\selectfont

\usepackage{qtree}
\usepackage{forest}
\usepackage{mathtools} % small fixes & extras over amsmath

\usepackage{caption}

\usepackage{etoolbox}
\usepackage{epic,eepic,ecltree}
\usepackage{graphicx}
\usepackage{tikz}
\usepackage{tikz-cd}

\usetikzlibrary{decorations.text}
\usepackage{amssymb,color, euscript, enumerate}
\usepackage{amsthm}
\usepackage{amsmath}
\usepackage{braket}
\usepackage{amscd}
\usepackage{txfonts}
\usepackage{comment}
\usepackage{bm}
\usepackage{booktabs}
\usepackage{array}

\usepackage{wrapfig}
\usepackage{graphicx}

\usepackage{amscd}
\numberwithin{equation}{section}

\makeatletter
\let\old@tocline\@tocline
\let\section@tocline\@tocline
\newcommand{\subsection@dotsep}{4.5}
\newcommand{\subsubsection@dotsep}{4.5}
\patchcmd{\@tocline}
  {\hfil}
  {\nobreak
     \leaders\hbox{$\m@th
        \mkern \subsection@dotsep mu\hbox{.}\mkern \subsection@dotsep mu$}\hfill
     \nobreak}{}{}
\let\subsection@tocline\@tocline
\let\@tocline\old@tocline

\patchcmd{\@tocline}
  {\hfil}
  {\nobreak
     \leaders\hbox{$\m@th
        \mkern \subsubsection@dotsep mu\hbox{.}\mkern \subsubsection@dotsep mu$}\hfill
     \nobreak}{}{}
\let\subsubsection@tocline\@tocline
\let\@tocline\old@tocline

\let\old@l@subsection\l@subsection
\let\old@l@subsubsection\l@subsubsection

\def\@tocwriteb#1#2#3{%
  \begingroup
    \@xp\def\csname #2@tocline\endcsname##1##2##3##4##5##6{%
      \ifnum##1>\c@tocdepth
      \else \sbox\z@{##5\let\indentlabel\@tochangmeasure##6}\fi}%
    \csname l@#2\endcsname{#1{\csname#2name\endcsname}{\@secnumber}{}}%
  \endgroup
  \addcontentsline{toc}{#2}%
    {\protect#1{\csname#2name\endcsname}{\@secnumber}{#3}}}%

\newlength{\@tocsectionindent}
\newlength{\@tocsubsectionindent}
\newlength{\@tocsubsubsectionindent}
\newlength{\@tocsectionnumwidth}
\newlength{\@tocsubsectionnumwidth}
\newlength{\@tocsubsubsectionnumwidth}
\newcommand{\settocsectionnumwidth}[1]{\setlength{\@tocsectionnumwidth}{#1}}
\newcommand{\settocsubsectionnumwidth}[1]{\setlength{\@tocsubsectionnumwidth}{#1}}
\newcommand{\settocsubsubsectionnumwidth}[1]{\setlength{\@tocsubsubsectionnumwidth}{#1}}
\newcommand{\settocsectionindent}[1]{\setlength{\@tocsectionindent}{#1}}
\newcommand{\settocsubsectionindent}[1]{\setlength{\@tocsubsectionindent}{#1}}
\newcommand{\settocsubsubsectionindent}[1]{\setlength{\@tocsubsubsectionindent}{#1}}

\renewcommand{\l@section}{\section@tocline{1}{\@tocsectionvskip}{\@tocsectionindent}{}{\@tocsectionformat}}%
\renewcommand{\l@subsection}{\subsection@tocline{2}{\@tocsubsectionvskip}{\@tocsubsectionindent}{}{\@tocsubsectionformat}}%
\renewcommand{\l@subsubsection}{\subsubsection@tocline{3}{\@tocsubsubsectionvskip}{\@tocsubsubsectionindent}{}{\@tocsubsubsectionformat}}%
\newcommand{\@tocsectionformat}{}
\newcommand{\@tocsubsectionformat}{}
\newcommand{\@tocsubsubsectionformat}{}
\expandafter\def\csname toc@1format\endcsname{\@tocsectionformat}
\expandafter\def\csname toc@2format\endcsname{\@tocsubsectionformat}
\expandafter\def\csname toc@3format\endcsname{\@tocsubsubsectionformat}
\newcommand{\settocsectionformat}[1]{\renewcommand{\@tocsectionformat}{#1}}
\newcommand{\settocsubsectionformat}[1]{\renewcommand{\@tocsubsectionformat}{#1}}
\newcommand{\settocsubsubsectionformat}[1]{\renewcommand{\@tocsubsubsectionformat}{#1}}
\newlength{\@tocsectionvskip}
\newcommand{\settocsectionvskip}[1]{\setlength{\@tocsectionvskip}{#1}}
\newlength{\@tocsubsectionvskip}
\newcommand{\settocsubsectionvskip}[1]{\setlength{\@tocsubsectionvskip}{#1}}
\newlength{\@tocsubsubsectionvskip}
\newcommand{\settocsubsubsectionvskip}[1]{\setlength{\@tocsubsubsectionvskip}{#1}}

\patchcmd{\tocsection}{\indentlabel}{\makebox[\@tocsectionnumwidth][l]}{}{}
\patchcmd{\tocsubsection}{\indentlabel}{\makebox[\@tocsubsectionnumwidth][l]}{}{}
\patchcmd{\tocsubsubsection}{\indentlabel}{\makebox[\@tocsubsubsectionnumwidth][l]}{}{}

\newcommand{\@sectypepnumformat}{}
\renewcommand{\contentsline}[1]{%
  \expandafter\let\expandafter\@sectypepnumformat\csname @toc#1pnumformat\endcsname%
  \csname l@#1\endcsname}
\newcommand{\@tocsectionpnumformat}{}
\newcommand{\@tocsubsectionpnumformat}{}
\newcommand{\@tocsubsubsectionpnumformat}{}
\newcommand{\setsectionpnumformat}[1]{\renewcommand{\@tocsectionpnumformat}{#1}}
\newcommand{\setsubsectionpnumformat}[1]{\renewcommand{\@tocsubsectionpnumformat}{#1}}
\newcommand{\setsubsubsectionpnumformat}[1]{\renewcommand{\@tocsubsubsectionpnumformat}{#1}}
\renewcommand{\@tocpagenum}[1]{%
  \hfill {\mdseries\@sectypepnumformat #1}}

\let\oldappendix\appendix
\renewcommand{\appendix}{%
  \leavevmode\oldappendix%
  \addtocontents{toc}{%
    \protect\settowidth{\protect\@tocsectionnumwidth}{\protect\@tocsectionformat\sectionname\space}%
    \protect\addtolength{\protect\@tocsectionnumwidth}{2em}}%
}
\makeatother

\makeatletter
\settocsectionnumwidth{2em}
\settocsubsectionnumwidth{2.5em}
\settocsubsubsectionnumwidth{3em}
\settocsectionindent{1pc}%
\settocsubsectionindent{\dimexpr\@tocsectionindent+\@tocsectionnumwidth}%
\settocsubsubsectionindent{\dimexpr\@tocsubsectionindent+\@tocsubsectionnumwidth}%
\makeatother

\settocsectionvskip{10pt}
\settocsubsectionvskip{0pt}
\settocsubsubsectionvskip{0pt}
    
\settocsectionformat{\bfseries}
\settocsubsectionformat{\mdseries}
\settocsubsubsectionformat{\mdseries}
\setsectionpnumformat{\bfseries}
\setsubsectionpnumformat{\mdseries}
\setsubsubsectionpnumformat{\mdseries}

\let\oldtableofcontents\tableofcontents
\renewcommand{\tableofcontents}{%
  \vspace*{-\linespacing}% Default gap to top of CONTENTS is \linespacing.
  \oldtableofcontents}

\newcommand{\msl}{\mathrm{sl}}

\newcommand{\zee}{\bar{\zeta}}

\newcommand{\Harm}{\mathrm{Harm}}

\newcommand{\Embh}{\mathrm{Mfld}_{2,\mathrm{HS}}^{\mathrm{CO}}}

\newcommand{\Emb}{\mathrm{Mfld}_{2,\mathrm{Emb}}^{\mathrm{CO}}}

\newcommand{\Embcd}{\mathrm{Mfld}_{d,\mathrm{emb}}^{\mathrm{CO}}}

\newcommand{\Embc}{\mathrm{Mfld}_2^{\mathrm{CO}}}

\newcommand{\Disk}{\mathrm{Disk}_2^{\mathrm{CO}}}

\newcommand{\Diskc}{\mathrm{Disk}_{2,\mathrm{emb}}^{\mathrm{CO}}}

\newcommand{\Hilb}{{\mathrm{Hilb}}}
\newcommand{\Ind}{{\mathrm{Ind}}}

\newcommand{\fG}{\mathbb{G}}
\newcommand{\bB}{\mathbb{B}}

\newcommand{\Hom}{{\mathrm{Hom}}}

\newcommand{\dn}{{\frac{d-2}{2}}}

\newcommand{\Cc}{C_c^\infty}

\newcommand{\cC}{{\mathcal C}}

\newcommand{\N}{\mathbb{N}}
\newcommand{\Z}{\mathbb{Z}}
\newcommand{\R}{\mathbb{R}}
\newcommand{\C}{\mathbb{C}}

\newcommand{\Om}{\Omega}

\newcommand{\Conf}{\mathrm{Conf}}

\newcommand{\sod}{\mathrm{so}(d+1,1)}

\newcommand{\alg}{\text{alg}}

\newcommand{\std}{{\text{std}}}

\newcommand{\va}{\bm{1}}

\newcommand{\id}{{\mathrm{id}}}

\newcommand{\z}{{\bar{z}}}

\newcommand{\w}{{\bar{w}}}

\newcommand{\m}{{\rho}}

\newcommand{\pa}{{\partial}}

\newcommand{\Vect}{{\underline{\text{Vect}}_\C}}

\newcommand{\cN}{{\mathcal{N}}}

\newcommand{\al}{\alpha}
\newcommand{\ep}{\epsilon}
\newcommand{\be}{\beta}

\newcommand{\ze}{\zeta}

\newcommand{\om}{\omega}

\newcommand{\si}{\sigma}

\newcommand{\hf}{\hat{h}}
\newcommand{\cl}{\mathrm{cl}}

\newcommand{\fh}{{\hat{\mathfrak{h}}}}

\newcommand{\ft}{\frac{1}{2}}

\newcommand{\ba}{\bar{a}}

\newcommand{\Ld}{{\overline{L}}}

\newcommand{\Sym}{\mathrm{Sym}}

\newcommand{\Hf}{{\mathcal{H}}}

\newcommand{\End}{\mathrm{End}}

\newcommand{\CE}{\mathbb{CE}_2}
\newcommand{\CEc}{\mathbb{CE}_{2}^\mathrm{emb}}
\newcommand{\CEcd}{\mathbb{CE}_{d}^\mathrm{emb}}
\newcommand{\CEh}{\mathbb{CE}_2^{\mathrm{HS}}}

\newcommand{\CEd}{\mathbb{CE}_d}
\newcommand{\CEf}{\mathbb{CE}}

\newcommand{\J}{\mathcal{J}}

\newcommand{\hotimes}{\hat{\otimes}}

\newcommand{\bD}{\mathbb{D}}

\newcommand{\bs}{\hat{S}}

\newcommand{\norm}[1]{\left\lVert #1 \right\rVert}

\newtheorem{thm}{Theorem}[section]
\newtheorem{dfn}[thm]{Definition}
\newtheorem{lem}[thm]{Lemma}
\newtheorem{prop}[thm]{Proposition}
\newtheorem{cor}[thm]{Corollary}
\newtheorem{rem}[thm]{Remark}

\theoremstyle{remark}

\newtheorem{mainthm}{Theorem}

\begin{document}

\begin{center}
{{\LARGE \bf Bergman space, Conformally flat 2-disk operads and affine Heisenberg vertex algebra}
} \par \bigskip

\renewcommand*{\thefootnote}{\fnsymbol{footnote}}
{\normalsize
Yuto Moriwaki \footnote{email: \texttt{moriwaki.yuto (at) gmail.com}}
}
\par \bigskip
{\footnotesize Interdisciplinary Theoretical and Mathematical Science Program (iTHEMS)\\
Wako, Saitama 351-0198, Japan}

\par \bigskip
\end{center}

\noindent

\vspace{5mm}

\begin{center}
\textbf{\large Abstract}
\end{center}
In this paper we consider the operad of holomorphic disk embeddings 
of the unit disk $\mathbb D \subset \mathbb C$.
We introduce a suboperad $\mathbb{CE}_2^{HS}$ defined by square-integrability conditions and show that the symmetric algebra
$\mathrm{Sym} A^{2}(\mathbb D)$ of the Bergman space carries a natural $\mathbb{CE}_2^{HS}$-algebra structure.
Conformally flat factorization homology with coefficients in $\mathrm{Sym} A^{2}(\mathbb D)$ then yields metric-dependent invariants of
two-dimensional Riemannian manifolds.
Moreover, $\mathrm{Sym} A^{2}(\mathbb D)$ is identified with the ind-Hilbert space completion of the affine Heisenberg vertex operator algebra.

\vspace{12mm}

%
%この論文では我々は単位円盤 $\bD=\{z\in \C \mid |z|<1\}$の正則埋め込みのなすoperadの二乗可積分性で特徴付けられる部分operad $\CEh$を導入する。$\CEh(1)$は unit disk 上の Grunsky operator が Hilbert-Schmidt になる正則単射のなすモノイドである。我々は
%Bergman空間の対称積が$\CEh$代数の構造を持つことを示す。このとき$\Sym(A^2(\bD))$は左kan拡張により、2次元リーマン多様体の計量に依存した不変量を定める。 $\Sym(A^2(\bD))$はaffine heisenberg VOAのind-Hilbert spaceとしての完備化になっており、
%共形平坦2-disk代数とunitary full VOA との間の関係を提案する。
%
%\vspace{3mm}

%In this paper we study the operad of holomorphic embeddings of the unit disk
%$\mathbb{D}=\{z\in\mathbb C\mid |z|<1\}$.
%We introduce a suboperad $\mathbb{CE}_2^{HS}$, characterized by square-integrability conditions.
%Its unary part $\mathbb{CE}_2^{HS}(1)$ is the monoid of univalent maps on $\mathbb{D}$ whose Grunsky operator is Hilbert-Schmidt.
%We prove that the symmetric algebra of the Bergman space carries a $\mathbb{CE}_2^{HS}$-algebra structure,
%whose left Kan extension yields metric-dependent invariants of two-dimensional Riemannian manifolds.
%Moreover, $\Sym A^{2}(\mathbb{D})$ is identified with an ind-Hilbert space completion of the affine Heisenberg vertex operator algebra.

\tableofcontents

\clearpage

\begin{center}
\textbf{\large Introduction}
\end{center}

The local structure of a two-dimensional chiral conformal field theory is encoded by a \emph{vertex operator algebra} (VOA) with  holomorphic product \cite{BPZ,B1,FLM}
\begin{align}
Y(\bullet,z):V \otimes V \rightarrow V((z)),\qquad 
Y(a,z)b = \sum_{n\in\Z}a(n)b\, z^{-n-1}.
\label{intro_product_z}
\end{align}
Such local data can be globalized to Riemann surfaces of arbitrary genus via conformal blocks, using methods from algebraic analysis and algebraic geometry \cite{TUY,Zh,DGT}.
The Monster moonshine phenomenon is one manifestation of this local-to-global principle in conformal field theory \cite{CN,B2}.

%The local structure of a two-dimensional chiral conformal field theory is encoded by a {\it vertex operator algebra} (VOA) equipped with a ``holomorphic product'' \cite{B1,FLM}
%\begin{align}
%Y(\bullet,z):V \otimes V \rightarrow V((z)),\qquad Y(a,z)b = \sum_{n\in\Z}a(n)b\, z^{-n-1}.
%\label{intro_product_z}
%\end{align}
%The product \eqref{intro_product_z} is a formal power series satisfying a certain algebraic identity. 
%Such formal (local) data are lifted to global data on Riemann surfaces of arbitrary genus via algebraic analysis methods using conformal blocks \cite{TUY,Zh,DGT}.
%%By methods of algebraic analysis, the conformal blocks of Tsuchiya-Ueno-Yamada
%%% and the chiral homology of Beilinson--Drinfeld, 
%% such formal (local) data can be lifted to global data on Riemann surfaces of arbitrary genus \cite{TUY,Zh,DGT}.
%The Monster moonshine phenomenon was one manifestation of this local-to-global principle in conformal field theory.

On the other hand, a two-dimensional non-chiral conformal field theory in general possesses a ``real-analytic product'' \cite{MS,FRS,HK,M1}
\begin{align*}
Y(-,z,\z):F \otimes F \rightarrow F((z,\z,|z|^\R)), \qquad Y(a,z,\z)b=\sum_{{r,s\in\R}} a(r,s)b\, z^{-r-1}\z^{-s-1},
%\label{intro_vertex}
\end{align*}
and VOAs appear 
as two canonical subalgebras, corresponding to the holomorphic and anti-holomorphic parts of $F$, through the exceptional isomorphism in two dimensions,
\begin{align}
\mathrm{so}(3,1)_\C \cong \mathrm{sl}_2\C\oplus \mathrm{sl}_2\C
\label{intro_exception}
\end{align}
(see \cite[Proposition 3.12]{M1}).
In dimensions $d \geq 3$, the Lie algebra $\mathrm{so}(d+1,1)$ does not split as in \eqref{intro_exception}.
Accordingly, conformal field theories in higher dimensions, as well as non-chiral full conformal field theories in $d=2$, are typically described in terms of Hilbert spaces, distributions, probability measures, or von Neumann algebras
%and not holomorphic functions
 \cite{LM,AGT,AMT,DKRV,BKLR}.
 In such settings, one cannot expect to describe the local-to-global principle by purely algebro-geometric methods.
Therefore, a framework that captures this principle without relying on {holomorphicity} is required.
%Conformal field theories in dimension $d \geq 3$, where \eqref{intro_exception} does not exist, as well as non-chiral full conformal field theories  in $d=2$, are described in terms of Hilbert spaces, distributions, probability measures or von Neumann algebras \cite{LM,AGT, AMT,DKRV,BKLR}.

As a tool for this purpose, we introduced {\it conformally flat factorization homology} in \cite{MLeft}.
Conformally flat factorization homology is a conformal Riemannian-geometric variation of  factorization homology \cite{Lurie2,AF1}, and its input data are given by {\it conformally flat $d$-disk algebras}, which form an ind-Hilbert space refinement of Costello-Gwilliam {\it factorization algebras} \cite{CG1,CG2}.

The first aim of this paper is to show that the completion of the affine Heisenberg vertex operator algebra as an ind-Hilbert space carries the structure of a conformally flat $2$-disk algebra structure.
We realize this structure on the symmetric algebra of the Bergman space $A^2(\bD)$ using the factorization algebra associated with the conformal Laplacian \cite{Mfactorization}.
%二次元のカイラル共形場理論の局所的な構造は正則な積を持つ代数
%\begin{align}
%Y(\bullet,z):V \otimes V \rightarrow V((z)),\qquad Y(a,z)b = \sum_{n\in\Z}a(n)b\, z^{-n-1},
%\label{intro_product_z}
%\end{align}
%頂点作用素代数によって記述される\cite{B1,FLM}。積\eqref{intro_product_z}はある代数的恒等式 (Borcherds identity) を満たす形式的級数であるが、
%こうした形式的(局所的)なデータは代数解析/代数幾何的な手法により土屋上野山田の共形ブロックによって、一般の種数のリーマン面上の大域的なデータへと持ち上げられる\cite{TUY,BD,Zhu}。
%モンスター・ムーンシャイン現象はこうした共形場理論の局所大域的な対応の一つの現れであった。
%ただし2次元共形場理論は一般には実解析的な積
%\begin{align*}
%Y(-,z,\z):F \otimes F \rightarrow F((z,\z,|z|^\R)), \qquad Y(a,z,\z)b=\sum_{\substack{r,s\in\R\\r-s \in\Z}} a(r,s)b\, z^{-r-1}\z^{-s-1}
%%\label{intro_vertex}
%\end{align*}
%を持ち\cite{HK}、VOA は$2$次元における例外的な同型
%\begin{align}
%\mathrm{so}(3,1)_\C \cong \mathrm{sl}_2\C\oplus \mathrm{sl}_2\C\label{intro_exception}
%\end{align}
%を通じて、$F$の正則と反正則な部分からなる二つのcanonical な部分代数として現れる\cite[Proposition 3.12]{M1}。
%\eqref{intro_exception}の存在しない$d \geq 3$の共形場理論や、$d=2$であってもカイラルでない full 共形場理論は実解析的な関数やヒルベルト空間、超関数、確率測度、作用素環を用いて記述されるため\cite{AMT,DKRV,KRV,GKRV1,BKLR}、代数解析を使って局所大域的な対応を記述することは望めない。そこで正則性を用いずに共形場理論の局所大域的な対応を記述する枠組みが必要である。
%
%こうした一般のCFTの局所大域的な対応を記述するための道具として、我々は\cite{MLeft}において共形平坦因子化ホモロジーを導入した。
%共形平坦因子化ホモロジーは、Lurie や AFの因子化ホモロジー\cite{Lurie2,AF1}の共形リーマン幾何的なバリエーションであり、このホモロジー理論の input data はCostello-Gwilliam の因子化代数の refinementである 共形平坦d-disk algebra によって与えられる。
%
%
%この論文の第一の目的は affine Heisenberg vertex operator algebra の ind-Hilbert space としての完備化に共形平坦 d-disk algebra の構造が入ることを
%Bergman 空間$A^2(\bD)$の対称積$\Sym A^2(\bD)$と共形ラプラシアンに付随する因子化代数\cite{Mfactorization}を用いて証明することである。

\begin{table}[h]
\centering
\renewcommand{\arraystretch}{1.15}
\setlength{\tabcolsep}{8pt}
\begin{tabular}{@{}p{0.25\linewidth} p{0.3\linewidth} p{0.4\linewidth}@{}}
\toprule
 & \textbf{Vertex algebra} & \textbf{$\CEh$-algebra} \\
\midrule
Vector space
  & $M(0) = \mathrm{Ind}_{\hf_+}^{\hf}\C \va$
  & $\bigoplus_{p=0}^\infty \Sym^p A^2(\bD)$
\\
translation
  & $Y(h(-n-1)\va,\zeta)\va$
  & $\frac{(n+1)z^n}{(1-\zeta z)^{n+2}} \in A^2(\bD)$
%$1$-particle states
%  & $h(-n-1)\va$
%  & $(n+1)z^n$.
\\
$p$-particle states
  & $h(-k_1-1)\cdots h(-k_p-1)\va$
  & $\sqrt{p!}\bs^p((k_1+1)z^{k_1} \otimes \cdots \otimes (k_p+1)z^{k_p})$.
\\
%Dilation by $r\in (0,1]$
%  & $r^{L(0)}$
%  & $r^p f(rz_1,\dots,rz_p)$
%\\
vertex operator
  & $Y(r^{L(0)}a,\zeta)s^{L(0)}b$
  & $\m_{B_{\zeta,r},B_{s,0}}(a,b)$
\\
\bottomrule
\end{tabular}
\label{tab:dictionary}
\end{table}

%「係数環」であるBergman空間 $\Sym A^2(\bD)$ を左カン拡張することで2次元リーマン多様体の計量に依存した不変量が構成される。
%標語的に言えば「一般に unitary (full) VOA の完備化は計量に依存したホモロジー理論の係数環である」と期待され、
%共形場理論の正則性に依存しない局所大域的な対応を与える。
%
%この論文のもう一つの目的は2次元の共形幾何を記述する $2$-disk operad として、
%\cite{MLeft}で導入した$\CE$を拡大したより精密な operad $\CEh$を導入することである。
%後で議論するように$\CEh$はTakhtanjan-TeoによるHilbert manifold を用いた universal Teihmuller 空間の精密化と密接に関係している。
%共形ブロックの理論では頂点作用素代数からリーマン面の「代数幾何的な」 moduli 空間の上の sheaf を与えるが\cite{DGT}、こうした関係性は我々の枠組みではunitary full VOA から「Hilbert 解析的な」 Teihmuller 空間の理論が得られると期待される。
%
%以下では、まず頂点作用素代数の立場から共形平坦$2$-disk algebra とその有界性の問題がどのように現れるかを述べ、その後 Bergman 空間の例について説明する。

Conformally flat factorization homology with coefficients in $\Sym A^{2}(\bD)$
defines a metric-dependent invariant of two-dimensional Riemannian manifolds.
Heuristically, the completion of a unitary (full) vertex operator algebra should provide such coefficients,
thereby yielding a local-to-global principle for conformal field theory that does not rely on holomorphicity.

%
%
%By taking the left Kan extension with
%the conformally flat $2$-disk algebra $\Sym A^2(\bD)$, one obtains a metric-dependent invariant of two-dimensional Riemannian manifolds.
%Heuristically, the completion of a unitary (full) vertex operator algebra should provide the coefficient system for conformally flat factorization homology thus giving a local-to-global principle in conformal field theory beyond holomorphic methods.

%From this construction, one obtains in particular a metric-dependent invariant of two-dimensional Riemannian manifolds
%$\mathrm{H}_{\mathrm{CF}}((M,g),\Sym A^2(\bD))$ with coefficients in the Bergman space.
%In general, we expect that the completion of a unitary (full) VOA provides the coefficient system of the metric-dependent homology theory, thereby giving a local-to-global principle in conformal field theory that does not rely on holomorphicity.

Another aim of this paper is to introduce a refined operad $\CEh$, extending the conformally flat 2-disk operad $\CE$ introduced in \cite{MLeft},
and show that $\Sym A^2(\bD)$ inherits a $\CEh$-algebra structure.
As will be discussed later, $\CEh$ is closely related to the refinement of the universal Teichmüller space as a Hilbert manifold due to  Takhtajan-Teo \cite{TT}.
In the theory of conformal blocks, a vertex operator algebra gives rise to a sheaf over the algebro-geometric moduli space of Riemann surfaces \cite{DGT}.
In contrast, within our framework we expect that the completion of a unitary full VOA gives rise to a {Hilbert-analytic analogue of conformal blocks} over Teichm\"uller space.
%we expect that the completion of a unitary full VOA gives rise to a Hilbert-analytic framework for Teichm\"{u}ller space.

In what follows, we first explain how the issue of (un)boundedness arises from the viewpoint of vertex operator algebras, and 
then illustrate the operad $\CEh$ and its action in the Bergman space example.
%
%In what follows, we first explain 
%the issue of (un)boundedness arises from the perspective of vertex operator algebras, and then illustrate the definition of the operad $\CEh$ and the construction in the example of the Bergman space.

\vspace{3mm}

\noindent
\begin{center}
{\bf 0.1. Hilbert space completions of VOAs and (un)boundedness}
\end{center}

%
%
%\vspace{3mm}
%
%
%\textbf{Hilbert space completions of vertex operator algebras and (un)boundedness}

A bridge between VOAs and algebraic QFT (the formulation of QFT in terms of von Neumann algebras) was constructed by Carpi-Kawahigashi-Longo-Weiner, who considered the Hilbert space completion $H_V$ of a unitary VOA \cite{CKLW}.
This line of work was subsequently developed in joint work with Adamo and Tanimoto, leading to a relation between non-chiral unitary full VOAs and axiomatic QFT \cite{AMT}.
%This line of work was subsequently developed further by the author and Adamo-Tanimoto into a relation between non-chiral unitary full VOAs and axiomatic QFT \cite{AMT}.
%こうした試みは著者とAdamo-Tanimotoにより
%non-chiral な unitary full VOA と axiomatic QFT (ヒルベルト空間とその上の非有界作用素) との間の関係に発展した\cite{AMT}。
Let $V$ be a unitary (full) VOA and let $a_i \in V$ be quasi-primary vectors. 
If $1>|\ze_1|>|\ze_2|>\cdots>|\ze_n|>0$, then the product of vertex operators converges absolutely and defines a vector in 
$H_V$:
%the Hilbert space $H_V$:
\begin{align}
Y(a_1,\ze_1,\zee_1)\cdots Y(a_n,\ze_n,\zee_n)\va \in H_V
\label{eq_comp}
\end{align}
(see \cite[Section 3.3]{AMT}).
Physically, the expression \eqref{eq_comp} corresponds to a state obtained by inserting $a_i\in V$ at the points $\ze_i \in \bD=\{|z|<1\}$, and the Hilbert space $H_V$ may thus be viewed as the space of states on the unit disk $\bD$.
However, as we shall see below, $Y(\bullet,z,\z)$ is in general unbounded with respect to the norm on $H_V$, and this unboundedness, which is ubiquitous in quantum field theory, creates substantial analytic difficulties \cite{GW}.

In this paper we consider the affine Heisenberg vertex operator algebra $M(0)$ generated by $h \in M(0)_1$.
Its (analytically continued) two-point function is given by
\begin{align*}
(\va, Y(L(-1)^n h,\zeta_1)Y(L(-1)^m h,\zeta_2)\va ) 
= (-1)^n \frac{(n+m+1)!}{(\zeta_1-\zeta_2)^{n+m+2}},
\end{align*}
and even after normalization using $\norm{L(-1)^n h}=\sqrt{n+1}\,n!$, one encounters divergence through the binomial coefficient
$\binom{n+m}{n}\frac{1}{(\zeta_1-\zeta_2)^{n+m+2}}$.

\begin{wrapfigure}{r}{0.18\textwidth}
  \centering
 \vspace{-0.8\baselineskip} % 必要なら微調整
  \includegraphics[width=0.18\textwidth]{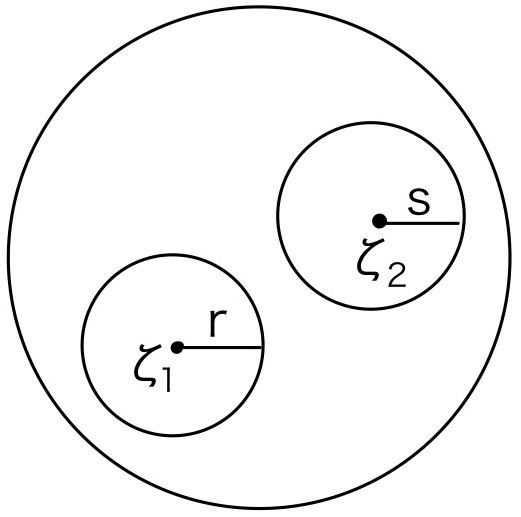}
%  \caption{A configuration in the operad $\CE$.}
  \label{fig_intro}
  \vspace{-0.8\baselineskip}
\end{wrapfigure}
Let $B_r(\zeta)$ denote the open disk of radius $r$ centered at $\zeta$. 
Our key observation is the following. 
Choose $1>r>0$ and $1>s>0$ such that
\begin{align}
\overline{B_r(\zeta_1)} \cap \overline{B_s(\zeta_2)}=\emptyset,
\quad\text{and}\quad 
B_r(\zeta_1),B_s(\zeta_2)\subset \bD.
\label{intro_geom_sep}
\end{align}
Consider the radius-normalized two-point function
\begin{align*}
\left\langle 
\va,
Y\!\left(r^{L(0)} \frac{L(-1)^n h}{\norm{L(-1)^n h}},\zeta_1\right)
Y\!\left(s^{L(0)} \frac{L(-1)^m h}{\norm{L(-1)^m h}},\zeta_2\right)
\va
\right\rangle
\sim 
\binom{n+m}{n}
\frac{r^n s^m}{(\zeta_1-\zeta_2)^{n+m}}.
\end{align*}
From \eqref{intro_geom_sep} we obtain
\[
\sigma
=
\frac{r}{|\zeta_1-\zeta_2|}
+
\frac{s}{|\zeta_1-\zeta_2|}
<1,
\]
which implies $
\binom{n+m}{n}
\frac{r^n s^m}{(\zeta_1-\zeta_2)^{n+m}}
\le
\sigma^{\,n+m}$.
Hence the radius-normalized two-point function decays exponentially. 
By contrast, when the two disks are tangent ($\sigma=1$), the two-point function becomes unbounded.
Generalizing the geometric configuration \eqref{intro_geom_sep}, we consider the operad of holomorphic disk embeddings
\begin{align}
\CEc(n)
=
\left\{
\sqcup_{i=1}^n \phi_i:\sqcup_n \bD \to \bD \mid
\text{injective holomorphic maps}
\right\}.
\label{intro_hol_emb}
\end{align}
For $B_r(\zeta)\subset\bD$, the holomorphic map
\begin{align}
B_{\zeta,r}:\bD \to \bD,
\qquad
B_{\zeta,r}(z)=rz+\zeta
\label{intro_trans}
\end{align}
defines an element $(B_{\zeta_1,r},B_{\zeta_2,s})\in\CEc(2)$ corresponding to the configuration \eqref{intro_geom_sep}. 
Note that the naive definition $\CEc$ also includes configurations with $\sigma=1$, where the (normalized) two-point function is unbounded.

In dimension $d \geq 3$, 
%these observations were extended to $d\ge3$.
a necessary and sufficient condition for the boundedness of the two-point function associated with conformal open embeddings of disks was established \cite[Theorem 2.31]{MLeft}, leading to a refined operad $\CEd$.
In dimension $d=2$, the boundedness condition is more subtle, as we shall explain below.

\vspace{3mm}

\noindent
\begin{center}
{\bf 0.2. Bergman space, Hilbert--Schmidt conditions, and Teichm\"uller spaces}
\end{center}

%\textbf{The Bergman space, Hilbert--Schmidt conditions, and the universal Teichm\"uller space}

The Bergman space $A^2(\bD)$ is the Hilbert space of square-integrable holomorphic functions on the unit disk $\bD=\{z\in\C \mid |z|<1\}$,
\begin{align*}
A^2(\bD) = L^2\left(\bD,{\pi}^{-1}d^2x\right) \cap \mathrm{Hol}(\bD),
\end{align*}
which is a closed subspace of $L^2\left(\bD,{\pi}^{-1}d^2x\right)$ with orthonormal basis $\{\sqrt{n+1}z^n\}_{n \geq 0}$.
Let $\Sym\, A^2(\bD)=\bigoplus_{p=0}^\infty \Sym^p A^2(\bD)$ be the algebraic direct sum.
For each finite $N$, the space $\bigoplus_{p=0}^N \Sym^p A^2(\bD)$ is a Hilbert space, whereas $\Sym A^2(\bD)$ is not complete; thus it defines an object of the ind category of Hilbert spaces $\Ind\Hilb$ (see \cite[Appendix B]{MLeft}).

Since $\norm{h(-n-1)\va}=\sqrt{n+1}$ and $\norm{h(-1)^p\va}=\sqrt{p!}$, the linear map
\begin{align}
\Psi:
h(-k_1-1)\cdots h(-k_p-1)\va
\mapsto \sqrt{p!}\bs^p((k_1+1)z^{k_1} \otimes \cdots \otimes (k_p+1)z^{k_p})
\label{intro_Psi}
\end{align}
defines an isometry from the affine Heisenberg VOA $M(0)$ to $\Sym A^2(\bD)$ (see Lemma \ref{lem_Psi}).
Here $\bs^p$ denotes the symmetrization operator.

In \cite{Mfactorization}, using the factorization algebra associated with the (conformal) Laplacian, we constructed an algebra over the naive operad $\CEc$ in the category of $\C$-vector spaces.
As discussed above, configurations in $\CEc$ do not in general define bounded operators.
We therefore introduce a suboperad $\CEh\subset \CEc$ on which the corresponding actions are bounded.
First, the submonoid $\CEh(1) \subset \CEc(1)$ is defined by
\begin{align}
\CEh(1) = \{\phi \in \CEc(1) \mid F_\phi(z,w) \in L^2(\bD \times \bD) \}.
\label{intro_HS}
\end{align}
Here $F_\phi(z,w)$ is the holomorphic function on $\bD \times \bD$ given by
\begin{align}
F_\phi(z,w) = \frac{\phi'(z)\phi'(w)}{(\phi(z)-\phi(w))^2} -\frac{1}{(z-w)^2}.
\label{intro_F_def}
\end{align}
The function $F_\phi(z,w)$ is a cocycle corresponding to the central charge: it describes how the two-dimensional Green's function $-(2\pi)^{-1}\log |z-w|$ changes under a local conformal transformation $\phi$, and it arises from the factorization algebra associated with the conformal Laplacian in \cite{Mfactorization}.
For $n \geq 2$, we set
\begin{align*}
\CEh(n) = \left\{(\phi_1,\dots,\phi_n)\in \CEh(1)^n\mid \phi_i(\bD)\cap \phi_j(\bD)=\emptyset \text{ and }G_{\phi_i,\phi_j}(z,w) \in L^2(\bD \times \bD)  \right\}.
\end{align*}
Here $G_{\phi_1,\phi_2}(z,w)$ is also a holomorphic function on $\bD\times \bD$,
\begin{align}
G_{\phi_1,\phi_2}(z,w) =  \frac{\phi_1'(z)\phi_2'(w)}{(\phi_1(z)-\phi_2(w))^2},
\label{intro_G}
\end{align}
introduced via factorization algebras in \cite{Mfactorization}.
Then $\CEh$ is a suboperad of $\CEc$ (Proposition~\ref{prop_monoid_HS}).
Using the contraction defined by integrating the cocycle \eqref{intro_F_def}
together with the pushforward along $\phi$, we construct a monoid homomorphism (Theorem \ref{thm_semigroup_quantum})
\begin{align}
\rho_1:\CEh(1) \rightarrow \End\,\left(\Sym\, A^2(\bD)\right),
\label{intro_monoid}
\end{align}
%and boundedness is ensured by $F_\phi(z,w) \in L^2(\bD \times \bD)$.
Here, for boundedness, it is essential that $F_\phi(z,w) \in L^2(\bD \times \bD)$.
Higher operations
\begin{align*}
\rho_n:\CEh(n) \rightarrow \Hom\,\left(\Sym\, A^2(\bD)^{\otimes n}, \Sym\, A^2(\bD)\right)
\end{align*}
are defined similarly by $F_{\phi_i}(z,w)$ and $G_{\phi_i,\phi_j}(z,w)$ in $L^2(\bD\times \bD)$.
%In this way we obtain the following :
\begin{mainthm}[Theorem~\ref{thm_CF}, Theorem~\ref{thm_CF_algebra}, and Corollary~\ref{cor_simple}]\label{thm_Bergman}
The ind-Hilbert space $\Sym A^2(\bD)$ inherits the structure of a $\CEh$-algebra.
The restriction on the suboperad $\CE\subset \CEh$ yields a symmetric monoidal functor $\Disk \rightarrow \Ind \Hilb$ satisfying \cite[Definition 1.32]{MLeft}, and its left Kan extension along $\Disk \rightarrow \Embc$,
\begin{align}
\mathrm{Lan}_{\Sym A^2(\bD)}: \Embc \rightarrow \Ind \Hilb,
\label{intro_Lan}
\end{align}
is symmetric monoidal.
Moreover, $\Sym A^2(\bD)$ has no proper closed ideal as a $\CE$-algebra.
\end{mainthm}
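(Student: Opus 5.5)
The plan has three parts, matching the three assertions.

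\emph{Step 1: the $\CEh$-algebra structure.} I first write down $\rho_n$ explicitly on $\Sym A^2(\bD)$. Regard $\Sym^p A^2(\bD)$ as symmetric holomorphic $L^2$ functions of $p$ variables. For $(\phi_1,\dots,\phi_n)\in\CEh(n)$ and $f_i\in\Sym^{p_i}A^2(\bD)$, $\rho_n(f_1\otimes\cdots\otimes f_n)$ is a sum over Wick contractions. A contraction pairs variables of the same $f_i$ using the kernel $F_{\phi_i}$, or variables of $f_i$ and $f_j$ using $G_{\phi_i,\phi_j}$, integrates against $\pi^{-1}d^2x$, and then pushes forward the remaining variables by $\phi_i$. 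The pushforward is $g(z)\mapsto (\phi^{-1})'(z)\,g(\phi^{-1}(z))\,\chi_{\phi(\bD)}$, projected to $A^2(\bD)$ (equivalently, the adjoint of the composition operator $C_\phi g=\phi'\cdot g\circ\phi$).

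Boundedness then follows from three facts. $C_\phi$ is a contraction on $A^2$ because $\phi$ is injective (change of variables). Each contraction against an $L^2(\bD\times\bD)$ kernel is Hilbert--Schmidt, so it is bounded on each finite $\bigoplus_{p\le N}\Sym^p$. The number of contractions is finite at fixed degree. So $\rho_n$ is a morphism in $\Ind\Hilb$.

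The operad axioms (composition, equivariance, unit) reduce to the cocycle identities
\begin{align*}
F_{\phi\circ\psi}(z,w)&=\psi'(z)\psi'(w)F_\phi(\psi(z),\psi(w))+F_\psi(z,w),\\
G_{\phi\circ\psi_1,\phi\circ\psi_2}(z,w)&=\psi_1'(z)\psi_2'(w)\bigl(F_\phi+(\psi_1(z)-\psi_2(w))^{-2}\bigr),
\end{align*}
together with the matching of contraction combinatorics (Wick's theorem). On the algebraic level these are exactly the identities behind the $\CEc$-algebra of \cite{Mfactorization}. I would therefore prove the identities densely, on polynomials or on the image of $\Psi$ (Lemma \ref{lem_Psi}), and extend by continuity using Proposition \ref{prop_monoid_HS} and the monoid homomorphism of Theorem \ref{thm_semigroup_quantum}.

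\emph{Step 2: the restriction to $\CE$ and the left Kan extension.} First I check $\CE\subset\CEh$. For Möbius and affine disk embeddings $F_\phi=0$. For disjoint closed disks $G$ is bounded on $\bD\times\bD$, which is the $\sigma<1$ estimate from the introduction. Next I verify the conditions of \cite[Definition 1.32]{MLeft}: continuity in the operad parameters, the unitarity or compatibility with the $\Conf$-isometries, and the required density/approximation condition. The main check is continuity of $\phi\mapsto\rho(\phi)$ in operator norm on each degree, which follows from $L^2$-continuity of the kernels $F,G$ in $\phi$. Symmetric monoidality of $\mathrm{Lan}$ is then the general theorem of \cite{MLeft} once these axioms hold.

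\emph{Step 3: simplicity.} Let $I$ be a nonzero closed ideal and take $0\neq f\in I$. I apply dilations $\rho_1(B_{0,r})$, which act on $\Sym^p$ by $r^{p+\deg}$ on monomials. Taking suitable limits, or projecting onto lowest-degree components, isolates a nonzero vector in some $\Sym^p$. I then use annihilation-type operations coming from $\rho_2$ with the vacuum, or pairing through $G$, to lower the particle number down to $\va$. From $\va$, the translated one-particle states generate a dense subspace, using the table's identification with $M(0)$, so $I$ is everything.

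I expect this last step, extracting $\va$ from an arbitrary element of $I$ using only $\CE$-operations and closedness, to be the main obstacle. The first thing I would try is the explicit contraction against $G_{B_{\zeta,r},B_{0,s}}$, whose Taylor coefficients in $\zeta$ reproduce the modes $h(n)$, $n\ge0$.
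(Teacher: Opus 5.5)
\textbf{Verdict.} There are genuine gaps in Steps 2 and 3. Step 1 follows the paper's route and is fine.

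\textbf{Step 1.} This is the paper's construction: $\rho(\phi)=\rho_\cl(\phi)\circ\exp(\pa_\phi)$ and $\rho_{g_1,\dots,g_n}=\bs\circ(\rho(g_1)\otimes\cdots\otimes\rho(g_n))\circ\exp(\sum\hat C_{g_i,g_j})$. The paper checks the cocycle identities as you propose, but on the kernels $E_a$, where $\rho_\cl(\phi)E_a=\phi'(a)E_{\phi(a)}$. It also needs the third identity $G_{g_1f_1,g_2f_2}=f_1'f_2'\,G_{g_1,g_2}(f_1,f_2)$ for the cross terms $G_{g_i,g_{n+1}h_j}$. One caution: your explicit pushforward is $T_\phi^*=\rho_\cl(\J\phi)$. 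The kernels must then be paired sesquilinearly, $P\mapsto(F_\phi,P)_B$; the bilinear integral $\int F_\phi P\,d\mu$ only sees $F_\phi(0,0)P(0,0)$. This produces the conjugate algebra $A^{\J}$. That is harmless for the statement, but it is antiholomorphic in the insertion point when you invoke the $M(0)$ dictionary.

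\textbf{Step 2.} The ``main check'' you name is false. The map $\phi\mapsto\rho(\phi)$ is not operator-norm continuous on $\Hf^k$, already on $\fG$. The rotation $R_\theta(z)=e^{i\theta}z$ acts by $z^n\mapsto e^{i(n+1)\theta}z^n$, so $\|\rho(R_\theta)-1\|$ does not tend to $0$. Likewise $\|\rho(B_{0,r})-1\|=1$ for every $r<1$. $L^2$-continuity of $F$ and $G$ says nothing about the pushforward part.

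What the paper verifies for \cite[Definition 1.32]{MLeft} in Theorem~\ref{thm_CF} is the following.
\begin{itemize}
\item[(U)] $\rho_1|_{\fG}=\rho_\cl|_{\fG}$ (because $F_g=0$ for $g\in\fG$) is a strongly continuous unitary representation on each $\Hf^k$. Unitarity comes from Proposition~\ref{prop_conf_identity} applied to the Gram matrix $(E_a,E_b)_B=(1-\bar a b)^{-2}$, and strong continuity from density of the $E_a$.
\item[(D)] The dilations $z^n\mapsto r^{n+1}z^n$ form a strongly continuous, self-adjoint, contractive semigroup with $\mathrm{tr}_A\,\rho_\cl(B_{0,r})=\prod_{n\geq1}(1-r^n)^{-1}<\infty$. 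Condition (D) is absent from your plan.
\end{itemize}

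Your check of $\CE\subset\CEh$ also covers only M\"obius/affine maps and round disks. In dimension two, $\CE(1)$ contains every univalent map extending injectively to a neighbourhood $U$ of $\overline{\bD}$. The general argument is:
\begin{itemize}
\item $F_{\tilde f}$ is holomorphic on $U\times U$, hence bounded on $\overline{\bD}^2$;
\item $|G_{g_i,g_j}|\leq|g_i'||g_j'|/\delta_{ij}^2$ with $\delta_{ij}=\mathrm{dist}(\overline{g_i(\bD)},\overline{g_j(\bD)})>0$, and $\int_{\bD}|g'|^2d\mu=\pi^{-1}\mathrm{area}(g(\bD))$.
\end{itemize}

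\textbf{Step 3.} You leave open exactly the decisive step, and the paper avoids particle-lowering altogether. It sets $I=\{v\in M(0)\mid(\va,Y(a,z)v)_M=0\ \text{for all } a\}$. This is an ideal of the vertex algebra, hence $0$ because $M(0)$ is simple. By Theorem~\ref{thm_identification} this says the algebraic two-point null space $N^{\mathrm{alg}}_{\text{2-pt}}$ of $\Sym A^2(\bD)$ vanishes, and \cite[Corollary 3.11]{MLeft} turns that into the absence of proper closed ideals.

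The mechanism you are missing is that closed ideals are stable under dilation limits.
\begin{itemize}
\item If $0\neq f\in I\cap\Hf^k$ has lowest weight $m$, then $t^{-m}\rho_1(B_{0,t})f\to f_m$. This weight-$m$ component lies in the algebraic core. It generally mixes several $\Sym^p$, so dilations do not isolate a single particle number.
\item $\rho_1(B_{0,t})u\to(\va,u)\va$ as $t\to0$.
\end{itemize}
Hence $(\va,\rho_2(a,f_m))\va\in I$ for every $a$. Nondegeneracy of the two-point pairing, which is what the dictionary with the invariant form of $M(0)$ supplies, then gives $\va\in I$. Compatibility with $\CE(0)$ gives $\rho_2(g_1,g_2)(\va,w)=\rho_1(g_2)w$, so $\rho_1(B_{0,s})w\in I$ for all $w$. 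Letting $s\to1$, using the strong continuity in (D), gives $I=A$. No density argument for translated one-particle states is needed.

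Your alternative can also be made to work. Extract the modes $h(n)$, $n\geq0$, by Cauchy integrals of $\zeta\mapsto\rho_2(B_{\zeta,r},B_{0,s})(h_0,f)$ over a circle in the admissible annulus $r+s<|\zeta|<1-r$, and combine this with irreducibility of the Fock module. It is longer and rests on the same identification.
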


The category $\Embc$ is the category of germs of compact Riemannian manifolds (possibly with boundary) in dimension $2$ with conformal open embeddings \cite{MLeft}.
The left Kan extension in the theorem,
\begin{align*}
\mathrm{H}_{\mathrm{CF}}((M,g), \Sym A^2(\bD)) = \mathrm{Lan}_{\Sym A^2(\bD)}(M,g),
\end{align*}
defines a metric-dependent invariant of manifolds with coefficients in $\Sym A^2(\bD)$.

Let $F_\phi(z,w) = \sum_{n,m \geq 0}d_{n,m}z^nw^m$ be the Taylor expansion of $F_\phi$. 
Then the infinite matrix on $\ell^2(\N)$ given by 
$\left(\frac{d_{n,m}}{\sqrt{(n+1)(m+1)}}\right)_{n,m \geq 0}$
is called the \emph{Grunsky operator}.
The condition $F_\phi(z,w) \in L^2(\bD \times \bD)$ in \eqref{intro_HS} is equivalent to the Grunsky operator being Hilbert--Schmidt,
\begin{align*}
\sum_{n,m \geq 0} \frac{1}{(n+1)(m+1)}|d_{n,m}|^2 <\infty.
\end{align*}
The Hilbert--Schmidt property of the Grunsky operator characterizes the {\it Weil--Petersson class} and plays a central role in the 
Hilbert manifold refinement of the universal Teichm\"uller space
introduced by Takhtajan and Teo 
\cite{TT}. It is also essential in the refinement of the bordism category of Riemann surfaces in \cite{RSS}.
In this way, the Hilbert-Schmidt condition provides a key analytic ingredient in the Weil-Petersson geometry of Teichm\"uller theory
(see \cite{SS}).

Factorization homology is closely related to topological field theory \cite{Lurie3,Sc,CS,AF2}.
We expect that by suitably defining a ``Hilbert'' category $\Embh$ of two-dimensional Riemannian manifolds that contains the operad $\CEh$ as a full monoidal subcategory, and by taking the left Kan extension, one can construct Segal's functorial CFT \cite{Segal}.

%
%
%$F_\phi(z,w) = \sum_{n,m \geq 0}d_{n,m}z^nw^m$ を$F_\phi$のテイラー展開とする。このとき$l^2(\N)$上の無限次元行列$\left(\frac{d_{n,m}}{\sqrt{(n+1)(m+1)}}\right)_{n,m \geq 0}$は Grunsky operator と呼ばれる。
%\eqref{intro_HS}の条件、$F_\phi(z,w) \in L^2(\bD \times \bD)$は Grunsky operator が Hilbert-Schmidt operator になること
%\begin{align*}
%\sum_{n,m \geq 0} \frac{1}{(n+1)(m+1)}|d_{n,m}|^2 <\infty
%\end{align*}
%と同値である。
%Grunsky operator の Hilbert-Schmidt 性は、Weil-Petersson class として特徴づけられ Takhtanjan-Teo の universal Teichm\"{u}ller 空間の Hilbert 多様体としての refinement において重要な役割を果たす\cite{TT} (see also \cite{Shen})。
%これは\cite{RSS}における リーマン面の bordism category の 関数解析的なrefinement でも重要であり、このようにHilbert-Schmidt 性 (Weil-Petersson class)は、Hilbert 多様体として refine された Teichm\"{u}ller 空間論の解析的な基礎をなす。
%
%因子化ホモロジーは位相的な場の量子論と密接に関係しているが\cite{Lurie3, Sc, CS, AF2}、
%%2次元リーマン多様体の germ のなす圏 $\Emb$のこうした refinement を考えることで、
%operad $\CEh$を full monoidal subcategory として持つ''Hilbert 的な''二次元のリーマン多様体の圏$\Embh$を適切に定義し、その左カン拡張\eqref{eq_intro_left}をすることで、Segal の functorial CFT \cite{Segal} を構成できるのではないかと期待している。
%
%
\vspace{3mm}

\noindent
\begin{center}
{\bf 0.3. Unitary vertex operator algebras and analytic conformal blocks}
\end{center}

Finally, we explain the relationship between $\Sym A^2(\bD)$ and the affine Heisenberg VOA.
Let $\zeta \in \bD$ and $1>r,s>0$ satisfy
\begin{align}
B_r(\zeta),\; B_s(0) \subset \bD
\quad \text{ and }\quad 
\overline{B_{r}(\zeta)} \cap \overline{B_{s}(0)} = \emptyset.
\end{align}
Then $(B_{\zeta,r},B_{0,s}) \in \CEh(2)$ (see \eqref{intro_trans}).
Under the isometry $\Psi:M(0) \rightarrow \Sym A^2(\bD)$, for any $a,b \in M(0)$ we prove the identity
\begin{align}
\m_{(B_{\zeta,r}, B_{0,s})}(\Psi(a),\Psi(b))
= \Psi\!\left(Y(r^{L(0)} a,z)s^{L(0)} b \big|_{z=\zeta}\right)
\label{intro_compare}
\end{align}
in $\Sym A^2(\bD)$ (Theorem~\ref{thm_identification}).
This may be viewed as a refinement, at the Hilbert space level, of a relationship between holomorphic factorization algebras and vertex operator algebras \cite{CG1} (see also \cite{Bru,Vic,Nis}).
Moreover, via the relationship between unitary full VOAs and axiomatic QFT developed in \cite{AMT}, it provides a bridge between factorization algebras and functional-analytic formulations of quantum field theory.
%また AMT における unitary full VOA と axiomatic QFT との関係を通じて、因子化代数と関数解析的な場の量子論の定式化を結びつける。
Many examples of unitary (full) VOAs
%vertex operator algebras 
are known, and we expect that, by considering the completion of vertex operators as in \eqref{intro_compare}, one can produce a wide range of examples of $\CEh$-algebras and metric-dependent invariants of two-dimensional Riemannian manifolds.

The monoid action \eqref{intro_monoid} constructed explicitly in this paper should be closely related, via conformal welding, to the Henriques-Tener monoid action \cite{HT1,HT2}, which is defined by exponentiating the Virasoro action. We leave this connection for future work.

Finally, we comment on the relationship with non-chiral CFT.
For a holomorphic map $\phi:\bD \rightarrow \bD$, the assignment
\begin{align*}
\J(\phi)(z) = \overline{\phi(\z)}
\end{align*}
defines an outer automorphism $\J$ of the operad $\CEh$ (Definition~\ref{def_conjugate}),
and hence one can define the $\CEh$-algebra obtained by twisting $\Sym A^2(\bD)$ by $\J$.
This corresponds to the anti-holomorphic affine Heisenberg vertex operator algebra $\overline{M(0)}$ \cite{M1}.
Moreover,
\begin{align*}
\Sym\, A^2(\bD) \otimes \Sym\, A^2(\bD)^{\J}
\end{align*}
naturally carries a $\CEh$-algebra structure (this is the full CFT arising naturally from the factorization algebra associated with the two-dimensional Laplacian; see \cite{Mfactorization}).

In this way, the notion of a $\CEh$-algebra does not rely on the holomorphic splitting coming from the exceptional isomorphism \eqref{intro_exception}, and the left Kan extension \eqref{intro_Lan} may be regarded as a real-analytic analogue of the local-to-global principle described by conformal blocks.

The organization of this paper is as follows.
In Section~\ref{sec_pre_CE}, we review from \cite{MLeft} the definition of the operad $\CEc$ and define the outer automorphism $\J$.
In Section~\ref{sec_pre_Bergman}, we recall the definition of the Bergman space and basic properties.
In Section~\ref{sec_pre_HS}, we define our suboperad $\CEh \subset \CEc$.
In Section~\ref{sec_monoid}, we construct the monoid homomorphism \eqref{intro_monoid}, and in Section~\ref{sec_wick} we define
a geometric Wick contraction.
Based on these, in Section~\ref{sec_construction} we endow $\Sym A^2(\bD)$ with a $\CEh$-algebra structure.
In Section~\ref{sec_vertex}, we review the affine Heisenberg vertex operator algebra $M(0)$ and its unitary structure, and in Section~\ref{sec_dictionary} we prove the correspondence \eqref{intro_compare} between $M(0)$ and $\Sym A^2(\bD)$.

\vspace{3mm}

\noindent
\begin{center}
{\bf Acknowledgements}
\end{center}

I express my gratitude to Yoh Tanimoto and Maria Stella Adamo for valuable discussions on 
unitary full vertex operator algebras.
%,
%to Mayuko Yamashita for discussions on topological quantum field theory,
%to Masahito Yamazaki and Slava Rychkov for discussions on physical aspects,
%and to Hiro-Lee Tanaka, Takumi Maegawa, and Vladimir Sosnilo for discussions related to category theory.
%I also wish to express my gratitude to Yuji Tachikawa, 
%Masaki Natori, Naruki Masuda, Tomohiro Asano, Masahiro Futaki,
%Hiroshi Ooguri, Tomoyuki Arakawa, Shintaro Yanagida, Atsushi Matsuo, Hiroshi Yamauchi, Yoshihisa Saito, Toshiro Kuwabara for valuable discussions and comments. 
This work is supported by Grant-in Aid for Early-Career Scientists (24K16911).

\vspace{3mm}
\begin{center}
\textbf{\large Notations}
\end{center}

\vspace{3mm}
We will use the following notations:
\begin{itemize}
%\item[$\text{[}n\text{]}$:] the finite set $\{1,2,\dots,n\}$ for $n \geq 1$.
\item[$\bD$:]$=\{z\in\C \mid |z|<1 \}$, the unit disk in $\C$.
\item[$\CEc$:] an operad of holomorphic disk embeddings, \S \ref{sec_pre_CE}.
\item[$\fG$:]$=\{e^{i\theta}\frac{z-a}{1-\bar{a}z}\}_{\theta\in\R,a \in \bD}$ the maximal subgroup of $\CEc(1)$, \S \ref{sec_pre_CE}.
\item[$\CE$:] a suboperad of $\CEc$ which extend to a neighborhood, \S \ref{sec_pre_HS}.
\item[$\CEh$:] a suboperad of $\CEc$ satisfying a square-integrability condition, \S \ref{sec_pre_HS}.
%\item[$\CEh$:] 二乗可積分性を持つ正則 disk 埋め込みのなす operad
\item[$\Vect$:] the category of $\C$-vector spaces.
\item[$\Hilb$:] the category of Hilbert spaces with bounded linear operators.
\item[$\Ind \Hilb$:] the category of ind-objects in $\Hilb$.
\item[$\otimes$:] the algebraic tensor product of vector spaces.
\item[$\hotimes$:] the tensor product of Hilbert spaces.
\item[$A^2(\bD)$:] the Bergman space, \S \ref{sec_pre_Bergman}.
\item[$d\mu = \frac{1}{\pi} dx_1dx_2$:] the normalized Lebesgue measure, \S \ref{sec_pre_Bergman}.
\item[$E_a(z)$:]$ = \frac{1}{(1-az)^2}$ for $a\in \bD$, \S \ref{sec_pre_Bergman}.
\end{itemize}

\vspace{5mm}

\section{Preliminary}\label{sec_pre}
%\ref{sec_pre_CE}章では\cite{MLeft}で導入した operad $\CEc$の定義を振り返る。
%\ref{sec_pre_Bergman}では二乗可積分な正則関数のなすヒルベルト空間であるBergman空間とその再生核ヒルベルト空間としての性質を振り返る。
%\ref{sec_pre_HS}では、$\CEc$の suboperadである $\CEh$を定義し、\cite{MLeft}で導入した別のsuboperad $\CE \subset \CEc$との関係を調べる。
In Section~\ref{sec_pre_CE}, we recall the definition of the operad $\CEc$ introduced in \cite{MLeft}.  
In Section~\ref{sec_pre_Bergman}, we review the Bergman space, the Hilbert space of square-integrable holomorphic functions, and its structure as a reproducing kernel Hilbert space.  
In Section~\ref{sec_pre_HS}, we define the suboperad $\CEh \subset \CEc$ and examine its relationship with another suboperad $\CE \subset \CEc$ introduced in \cite{MLeft}.

\subsection{Conformally flat 2-disk operad and its complex conjugate}\label{sec_pre_CE}

In this section, following \cite[Section 1]{MLeft}, we review the definition of the conformally flat $2$-disk operad $\CEc$ and show that the complex conjugation defines an automorphism of the operad.
% We also recall some elementary results on M\"{o}bius transformations.

Let $\Emb$ be the category whose objects are two-dimensional oriented Riemannian manifolds without boundary and whose morphisms are orientation-preserving conformal open embeddings.
Let $g_\std=dx_1^2+dx_2^2$ be the Riemannian metric on $\R^2$.
Then the unit disk
\begin{align*}
\bD=\{z\in \C \mid |z|<1\}
\end{align*}
equipped with $g_\std$ is an object of $\Emb$.
We simply write $\bD$ for the pair $(\bD,g_\std)$.
Let $\Diskc$ be the full monoidal subcategory of $\Emb$ generated by $\bD$.
The objects of $\Diskc$ consist of finite disjoint unions of disks, $\sqcup_n \bD$ ($n \geq 0$).
We define
\begin{align*}
\CEc(n) = \Hom_{\Emb}(\sqcup_n \bD,\bD)\qquad (n \geq 0)
\end{align*}
to be the operad of conformal embeddings of disks.
For a symmetric monoidal category $\cC$, there is a one-to-one correspondence between algebras over this operad, namely $\CEc$-algebras in $\cC$, and symmetric monoidal functors $\Diskc \rightarrow \cC$ (see \cite[Proposition 1.11]{MLeft}).
Since, in two dimensions, orientation-preserving conformal maps are equivalently holomorphic maps with nonvanishing derivative, we have
\begin{align*}
\CEc(1) = \{f: \bD \rightarrow \bD \mid \text{$f$ is an injective holomorphic map} \},
\end{align*}
and $\CEc(1)$ becomes an (infinite-dimensional) monoid under composition.
Moreover, $\CEc(n)$ is given by conformal open embeddings with pairwise disjoint images,
\begin{align*}
\CEc(n) = \{(f_1,\dots,f_n) \in \CEc(1)^n \mid f_i(\bD) \cap f_j(\bD) =\emptyset, \text{ for } i\neq j\}.
\end{align*}
Here $\CEc(0)$ corresponds to the unique map from the empty set, namely $*=(\emptyset \rightarrow {\bD})$.
By Schwarz's lemma, the maximal subgroup $\CEc(1)^\times = \{\text{$f$ is invertible in }\CEc(1)\}$
coincides with the subgroup of M\"{o}bius transformations
\begin{align*}
\mathrm{PSU}(1,1) = \left\{z \mapsto e^{i\theta}\frac{z-\al}{1-\bar{\al}z}\right\}_{\theta\in\R,|\al|<1},
\end{align*}
see \cite[Proposition 1.22]{MLeft}, which we denote by $\fG$.

%この章では\cite[Section 1]{MLeft}に従い conformally flat 2-disk operad $\CEc$の定義を振り返り、複素共役の作用が operad の自己同型を定めることを見る。また一次分数変換に関する初等的な結果を振り返る。
%
%$\Emb$を対象が二次元の向きづけられたリーマン多様体で射が向きを保つ共形開埋め込みのなす圏とする。
%$g_\std=dx_1^2+dx_2^2$を$\R^2$上のリーマン計量する。このとき the unit disk
%\begin{align*}
%\bD=\{z\in \C \mid |z|<1\}
%\end{align*}
%equipped with $g_\std$ is an object of $\Emb$. この論文を通じてdisk 上のリーマン計量は$g_\std$を考え、組$(\bD,g_\std)$を単に$\bD$とかく。
%$\Diskc$を$\bD$から生成される $\Emb$の full monoidal subcategory とする。$\Diskc$の対象は 有限個のdisk の disjoint unions $\sqcup_n \bD$ ($n \geq 0$)からなる。
%また
%\begin{align*}
%\CEc(n) = \Hom_{\Emb}(\sqcup_n \bD,\bD)\qquad (n \geq 0)
%\end{align*}
%を disk の共形埋め込みのなすoperad とする。
%対称モノイダル圏$\cC$に対して、operad の代数、$\CEc$-algebra in $\cC$ と、対称monoidal 関手$\Diskc \rightarrow \cC$には一対一対応がある (see \cite[Proposition ]{MLeft})。
%二次元において向きを保つ共形写像と微分がゼロでない正則関数を考えることは同値であるから
%\begin{align*}
%\CEc(1) = \{f: \bD \rightarrow \bD \mid \text{$f$ is an injective holomorphic map} \},
%\end{align*}
%であり、$\CEc(1)$は合成により(無限次元の) monoid になる。また$\CEc(n)$は像が disjoint な open embeddings
%\begin{align*}
%\CEc(n) = \{(f_1,\dots,f_n) \in \CEc(1)^n \mid f_i(\bD) \cap f_j(\bD) =\emptyset, \text{ for } i\neq j\}
%\end{align*}
%によって与えられる。ここで$\CE(0)$は 空集合からの唯一の射$*=(\emptyset \rightarrow {\bD})$ に対応する。
%またシュワルツの補題から最大部分群$\CEc(1)^\times = \{f \in \CEc(1) \mid \text{$f$ is invertible in }\CEc(1)\}$
%は、一次分数変換
%\begin{align*}
%\mathrm{PSU}(1,1) = \left\{z \mapsto e^{i\theta}\frac{z-\al}{1-\bar{\al}z}\right\}_{\theta\in\R,|\al|<1},
%\end{align*}
%と一致する \cite[?]{MLeft}. 

The operad structure on $\CEc$, 
$\circ_i:\CEc(n) \times \CEc(m) \rightarrow \CEc(n+m-1)$,
is given by the operation of composing disk embeddings in the $i$-th disk ($i \in \{1,\dots,n\}$),
\begin{align*}
(f_1,\dots,f_n) \circ_i (g_1,\dots,g_m)
= (f_1,\dots,f_{i-1},f_ig_1,\dots,f_ig_m,f_{i+1},\dots,f_n).
\end{align*}
Here the identity map $(\id_{\bD}:\bD \rightarrow \bD)\in \CEc(1)$ serves as the unit, and the composition with $\CEc(0)$,
\begin{align*}
(f_1,\dots,f_n) \circ_i * 
= (f_1,\dots,f_{i-1},f_{i+1},\dots,f_n),
\end{align*}
is the operation obtained by forgetting the $i$-th disk embedding, regarded as an element of $\CEc(n-1)$.
%The pullback of the Riemannian metric by a holomorphic function $f$ satisfies
%\begin{align*}
%f^*(g_\std) = |f'|^2 g_\std.
%\end{align*}
%Hence the conformal factor of a holomorphic map is given by the absolute value $|f'|$ of its holomorphic derivative.
%
%$\CEc$上の operad 構造$\circ_i:\CEc(n) \times \CEc(m) \rightarrow \CEc(n+m-1)$は、$i$番目の disk に対して、disk の埋め込みを合成する operation ($i \in \{1,\dots,n\}$)
%\begin{align*}
%(f_1,\dots,f_n) \circ_i (g_1,\dots,g_m) = (f_1,\dots,f_{i-1},f_ig_1,\dots,f_ig_m,f_{i+1},\dots,f_n)
%\end{align*}
%によって与えられる。ここで恒等写像$(\id_{\bD}:\bD \rightarrow \bD)\in \CE(1)$が unit であり、$\CE(0)$との合成
%\begin{align*}
%(f_1,\dots,f_n) \circ_i * = (f_1,\dots,f_{i-1},f_{i+1},\dots,f_n)
%\end{align*}
%は$i$番目の disk embedding を忘れて$\CE(n-1)$と思う operation である。
%
%正則関数$f$によるリーマン計量の引き戻しは
%\begin{align*}
%f^*(g_\std) = |f'|^2 g_\std
%\end{align*}
%を満たす。よって正則写像の共形因子は正則微分の絶対値$|f'|$で与えられる。
Let $\phi \in \CEc(1)$. Set 
\begin{align*}
\J(\phi)(z) = \overline{\phi(\z)}
\end{align*}
for any $z\in \bD$. Then, $\J:\CEc(1) \rightarrow \CEc(1)$ is a monoid homomorphism.
Define a sequence of maps
\begin{align*}
\J:\CEc(n) \rightarrow \CEc(n),\quad (\phi_1,\dots,\phi_n) \mapsto \J(\phi_1,\dots,\phi_n)=(\J(\phi_1),\dots,\J({\phi}_n)).
\end{align*}
Then, $\J$ is an automorphism of the operad $\CEc$. 
\begin{dfn}\label{def_conjugate}
For any $\CEc$-algebra $A$, given by
\begin{align*}
\rho_n:\CEc(n) \rightarrow \mathrm{Hom}(A^{\otimes n},A),\qquad (n \geq 0)
\end{align*}
a \textbf{ complex conjugate} of $A$ is a $\CEc$-algebra defined on the same object $A$ with the multiplications $(\rho_n \circ \J)_{n \geq 0}$.
We denote it by $A^\J$.
\end{dfn}

\begin{rem}
The assignment $\phi \mapsto \J(\phi)$ can be defined more generally for a conformal diffeomorphism $g:\bD \rightarrow \bD$, not necessarily orientation-preserving. If $g \in \mathrm{PSU}(1,1)$ the corresponding twist is given by an inner automorphism, and it is easy to see that $A^g$ is isomorphic to $A$. Hence, by Schwarz's lemma, the above twist provides, essentially, the only nontrivial twist.
\end{rem}

The biholomorphic automorphism group of the disk, $\CEc(1)^\times \cong \mathrm{PSU}(1,1)$, satisfies the following important identity:
%
%\begin{rem}
%$\phi \mapsto \J(\phi)$はより一般に$g:\bD \rightarrow \bD$なる向きを保つとは限らない conformal な diffeomorphism に対して定義することができる。ただし$g \in \mathrm{PSU}(1,1)$となる場合は、inner automorphism による twist であり、$A^g$が$A$と同型であることは簡単に分かる。よってSchwartz の補題から上記の twist が本質的には唯一の非自明な twist を与える。
%\end{rem}
%
%disk の双正則 automorphism $\CE(1)^\times \cong \mathrm{PSU}(1,1)$ は次の重要な恒等式を満たす:
\begin{prop}\label{prop_conf_identity}
For any $\phi(z) = e^{i\theta}\frac{z-\al}{1-\bar{\al}z} \in \mathrm{PSU}(1,1)$ ($\theta \in\R$, $|\al|<1$),
\begin{align*}
 \frac{{\phi'(z)}\overline{\phi'(w)}}{(1- \phi(z)\overline{\phi(w)})^2}=\frac{1}{(1-z\bar{w})^2}
\end{align*}
holds for any $z,w \in \bD$.
Moreover, $\J(\phi)(z) =  e^{-i\theta}\frac{z-\bar{\al}}{1-{\al}z} \in \mathrm{PSU}(1,1)$.
%\begin{align*}
%{|1-z\bar{\al}|^2}\left(1-\left|\frac{z-\al}{1-z\bar{\al}}\right|^2\right) 
%%&= (1-z\bar{\al}-\z \al + |z|^2|\al|^2 )- (|z|^2+|\al|^2 -z\bar{\al}-\z\al)\\
%&=(1-|z|^2)(1-|\al|^2)
%\end{align*}
%holds.
%Moreover, for any $\phi \in \fG$,
%\begin{align*}
% (1-2(\phi(x),\phi(y))+|\phi(x)|^2|\phi(y)|^2) =\Om_\phi(x)\Om_{\phi}(y) (1-2(x,y)+|x|^2|y|^2).
%\end{align*}
\end{prop}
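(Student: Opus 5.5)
The plan is a direct computation of the key identity $1-\phi(z)\overline{\phi(w)} = \frac{(1-|\al|^2)(1-z\bar w)}{(1-\bar\al z)(1-\al\bar w)}$, followed by substitution.

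\textbf{Step 1 (derivative).} The phase $e^{i\theta}$ cancels in $\phi(z)\overline{\phi(w)}$ and contributes $e^{i\theta}e^{-i\theta}=1$ to $\phi'(z)\overline{\phi'(w)}$, so we may reduce to $\theta=0$. By the quotient rule,
\begin{align*}
\phi'(z)=e^{i\theta}\frac{(1-\bar\al z)+\bar\al(z-\al)}{(1-\bar\al z)^2}=e^{i\theta}\frac{1-|\al|^2}{(1-\bar\al z)^2}.
\end{align*}
Hence
\begin{align*}
\phi'(z)\overline{\phi'(w)}=\frac{(1-|\al|^2)^2}{(1-\bar\al z)^2(1-\al\bar w)^2}.
\end{align*}

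\textbf{Step 2 (key factorization).} Put $1-\phi(z)\overline{\phi(w)}$ over the common denominator $(1-\bar\al z)(1-\al\bar w)$. The numerator is
\begin{align*}
(1-\bar\al z)(1-\al\bar w)-(z-\al)(\bar w-\bar\al).
\end{align*}
Expanding, the cross terms $\bar\al z$ and $\al\bar w$ cancel. What remains is $1+|\al|^2 z\bar w - z\bar w-|\al|^2$, which factors as $(1-|\al|^2)(1-z\bar w)$. This polarized version of the classical identity $1-|\phi(z)|^2=\frac{(1-|\al|^2)(1-|z|^2)}{|1-\bar\al z|^2}$ is the only real content of the proof.

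\textbf{Step 3 (conclusion).} Squaring the factorization from Step 2 and dividing the expression from Step 1 by it, the factors $(1-|\al|^2)^2$ and $(1-\bar\al z)^2(1-\al\bar w)^2$ cancel. This leaves $(1-z\bar w)^{-2}$, as claimed. All denominators are nonzero for $z,w\in\bD$, since $|\al|<1$ and $|z\bar w|<1$.

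\textbf{Second claim.} Compute $\overline{\phi(\bar z)}=e^{-i\theta}\frac{z-\bar\al}{1-\al z}$ directly by conjugating each constant. This is the Möbius map with parameters $(-\theta,\bar\al)$, and $|\bar\al|<1$, so $\J(\phi)\in\mathrm{PSU}(1,1)$.

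I expect no real obstacle. The only point needing care is the bookkeeping of conjugations in Step 2, which I would double-check by setting $w=z$ and recovering the classical identity.
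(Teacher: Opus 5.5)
Your proposal is correct and follows the paper's proof: you compute $\phi'$, put $1-\phi(z)\overline{\phi(w)}$ over the common denominator $(1-\bar\al z)(1-\al\bar w)$, and factor the numerator as $(1-|\al|^2)(1-z\bar w)$. Your direct conjugation giving the formula for $\J(\phi)$ also supplies the routine check that the paper leaves implicit.
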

\begin{proof}
Since $\phi'(z) =e^{i\theta}\frac{(1-|\al|^2)}{(1-\bar{\al}z)^2} $, we have
\begin{align*}
\frac{{\phi'(z)}\overline{\phi'(w)}}{(1- \phi(z)\overline{\phi(w)})^2}&=\frac{(1-|\al|^2)^2}{((1-\bar{\al}z)(1-\al\bar{w})-(z-\al)(\bar{w}-\bar{\al}))^2}=\frac{1}{(1-z\bar{w})^2}.
\end{align*}
\end{proof}

\subsection{Bergman spaces and their tensor products}\label{sec_pre_Bergman}
In this section, we recall some basic results on the Bergman space.
The material in this section is standard; for a more detailed account of the Bergman space, see for example \cite{HKZ}.

%この章では Bergman 空間に関する基本的な結果を振り返る.
%この章の内容は標準的であり、Bergman 空間についてのより詳しい解説はたとえば\cite{}を参照されたい。
%Let $\bD$ be the unit disk $\bD = \{z \in \C \mid |z|<1\}$
%and 
For $p \geq 1$, let $\mathrm{Hol}(\bD^p)$ be the space of holomorphic functions on $\bD^p \subset \C^p$,
and $d\mu = \frac{1}{\pi^p}d^2z_1 \dots d^2 z_p = \frac{1}{\pi^p}dx_1\cdots dx_{2p}$ a normalized Lebesgue measure on $\bD^p$.
Set
\begin{align*}
A^2(\bD^p)&= L^2(\bD^p,d\mu) \cap \mathrm{Hol}(\bD^p),\\
&=\left\{f\in \mathrm{Hol}(\bD^p) \mid \int_{\bD^p} |f(z_1,\dots,z_p)|^2 d\mu <\infty \right\},
\end{align*}
called a \textbf{Bergman space}, consisting of square-integrable holomorphic functions on $\bD^p$.
It is well-known that the Bergman space is a closed subspace of $L^2(\bD^p,d\mu)$, and thus, a Hilbert space.
Denote by $(-,-)_B$ the inner product on $A^2(\bD^p)$.
Assume $p=1$.
Since
\begin{align*}
(z^n,z^m)_B &= \int_{\bD}\overline{z^n}z^m d\mu = \int_{\bD}r^{n+m}e^{(-n+m)i\theta}r \frac{d\theta}{\pi} dr= \frac{1}{n+1}\delta_{n,m},
\end{align*}
$\{e_n(z)= {\sqrt{n+1}}z^n\}_{n \geq 0}$ is an orthonormal basis of $A^2(\bD)$.
For any $a \in \bD$, 
\begin{align*}
E_a(z)&=\sum_{n \geq 0}\overline{e_n(\bar{a})}e_n(z)=
\sum_{n \geq 0}(n+1){a}^nz^n=\frac{1}{(1- {a}z)^2} \in H(\bD)
\end{align*}
is square-integrable and $E_a(z) \in A^2(\bD)$.
Moreover, for any $f= \sum_{n\geq 0} c_n e_n(z) \in A^2(\bD)$,
\begin{align}
(E_a(z),f)_B &= \int_\bD \overline{\frac{1}{(1- {a}z)^2}}f(z) d\mu= \sum_{n \geq 0} c_n e_n(\bar{a})=f(\bar{a}).\label{eq_evaluation}
\end{align}
Hence, taking the inner product with $E_a$ corresponds to evaluation at $\bar{a}$.
%よって$E_a$との内積は$\bar{a}$を代入する写像に対応する。
By \eqref{eq_evaluation}, we have:
\begin{prop}
\label{prop_dense}
The subspace spanned by $\{E_a\}_{a\in \bD}\subset A^2(\bD)$ is dense in $A^2(\bD)$, and 
\begin{align*}
(E_a(z),E_b(z))_B = \frac{1}{(1- \bar{a}b)^2},
\end{align*}
holds for any $a,b\in\bD$.
\end{prop}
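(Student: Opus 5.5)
Both parts follow from the reproducing property \eqref{eq_evaluation}, which says that $(E_a,f)_B=f(\bar a)$ for every $f\in A^2(\bD)$ and $a\in\bD$.

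For the inner product formula, take $f=E_b$ in \eqref{eq_evaluation}. This gives
\begin{align*}
(E_a,E_b)_B = E_b(\bar a)=\frac{1}{(1-b\bar a)^2}=\frac{1}{(1-\bar a b)^2}.
\end{align*}
The same value is obtained from the orthonormal expansions $E_a=\sum_{n\ge0}\overline{e_n(\bar a)}\,e_n$ and $E_b=\sum_{n\ge0}\overline{e_n(\bar b)}\,e_n$, whose coefficients are square-summable because $|a|,|b|<1$. Then
\begin{align*}
(E_a,E_b)_B=\sum_{n\ge0}(n+1)\bar a^n b^n=\frac{1}{(1-\bar a b)^2},
\end{align*}
with the inner product taken antilinear in the first slot, as in the computation preceding \eqref{eq_evaluation}.

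For density, I use the standard Hilbert space criterion: a subspace is dense if and only if its orthogonal complement is zero. Suppose $f\in A^2(\bD)$ satisfies $(E_a,f)_B=0$ for all $a\in\bD$. By \eqref{eq_evaluation}, $f(\bar a)=0$ for all $a\in\bD$. Since $a\mapsto\bar a$ is a bijection of $\bD$, $f$ vanishes identically as a function on $\bD$. Because elements of $A^2(\bD)$ are genuine holomorphic functions, with no almost-everywhere identification issues, this means $f=0$ in $A^2(\bD)$. Hence the closure of the span of $\{E_a\}_{a\in\bD}$ is all of $A^2(\bD)$.

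There is no real obstacle here. The only point needing care is keeping the conjugate-linearity convention consistent, so that the formula comes out as $(1-\bar a b)^{-2}$ rather than its complex conjugate.
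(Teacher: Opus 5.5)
Your proof is correct and matches the paper's approach: the paper derives both claims directly from the reproducing identity \eqref{eq_evaluation}, just as you do. You set $f=E_b$ to obtain the inner product, and for density you note that any $f$ orthogonal to every $E_a$ vanishes pointwise on $\bD$.
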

\begin{lem}\label{lem_kernel_derivative}
For any $n \geq 0$ and $a\in \bD$, $\pa_a^n E_a(z) \in A^2(\bD)$. Moreover, for any $f\in A^2(\bD)$,
\begin{align*}
(\pa_a^n E_a(z), f)_B = (\pa_z^n f)(\ba).
\end{align*}
\end{lem}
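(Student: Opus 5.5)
We prove the lemma by differentiating the series expansion of $E_a$ term by term in $a$. Since $E_a(z)=\sum_{k\geq 0}(k+1)a^kz^k=\sum_{k\geq 0}\sqrt{k+1}\,a^k e_k(z)$, the natural candidate for $\pa_a^n E_a$ as an element of $A^2(\bD)$ is
\begin{align*}
\pa_a^n E_a(z)=\sum_{k\geq n}\sqrt{k+1}\,\frac{k!}{(k-n)!}\,a^{k-n}e_k(z).
\end{align*}
Its coefficients are square-summable, because $\sum_{k}(k+1)\bigl(\tfrac{k!}{(k-n)!}\bigr)^2|a|^{2(k-n)}$ is bounded by $\sum_k (k+1)^{2n+1}|a|^{2(k-n)}$, which converges for $|a|<1$. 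Hence the series defines an element of $A^2(\bD)$.

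Next we check that this element really is the pointwise derivative $\pa_a^n\frac{1}{(1-az)^2}=\frac{(n+1)!\,z^n}{(1-az)^{n+2}}$. Convergence in $A^2(\bD)$ implies locally uniform convergence in $z$, since evaluation at a point is continuous by \eqref{eq_evaluation}. Term-by-term differentiation of the power series in $a$ is legitimate for fixed $z\in\bD$, so the two descriptions agree. Alternatively, one can note directly that $\frac{z^n}{(1-az)^{n+2}}$ is bounded on $\bD$ for fixed $|a|<1$, and is therefore in $L^2(\bD,d\mu)$; this gives the first claim immediately.

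For the pairing, take $f=\sum_k c_k e_k$. The inner product is antilinear in the first slot, so
\begin{align*}
(\pa_a^n E_a,f)_B=\sum_{k\geq n}\sqrt{k+1}\,\frac{k!}{(k-n)!}\,\ba^{\,k-n}c_k .
\end{align*}
On the other hand, $f(z)=\sum_k c_k\sqrt{k+1}\,z^k$, so
\begin{align*}
(\pa_z^nf)(z)=\sum_{k\geq n}c_k\sqrt{k+1}\,\frac{k!}{(k-n)!}\,z^{k-n},
\end{align*}
and evaluating at $z=\ba$ gives the same sum. Absolute convergence follows from Cauchy--Schwarz together with the square-summability bound above.

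An equivalent and more conceptual route is to differentiate \eqref{eq_evaluation} in $\ba$. The map $a\mapsto E_a\in A^2(\bD)$ is antiholomorphic, so that $a\mapsto(E_a,f)_B=f(\ba)$ is antiholomorphic, and differentiation can be passed inside the inner product once the difference quotients are known to converge in norm. The explicit coefficient computation avoids this justification. There is no real obstacle here; the only point requiring care is the convergence estimate, together with keeping track of the conjugation convention, since it is $\ba$ and not $a$ that appears.
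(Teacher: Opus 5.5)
Your proof is correct and is essentially the paper's argument: you expand $\pa_a^n E_a$ in monomials (you use the basis $e_k$; the paper uses $z^k$, with the closed form $\frac{(n+1)!z^n}{(1-az)^{n+2}}$, whose pole lies outside $\overline{\bD}$, to get membership in $A^2(\bD)$) and compare coefficients with the Taylor series of $\pa_z^n f$ at $\ba$. One small slip is in your optional aside: $a\mapsto E_a=\sum_k\sqrt{k+1}\,a^k e_k$ is holomorphic, not antiholomorphic, as an $A^2(\bD)$-valued map, and it is the conjugate-linearity of $(-,-)_B$ in the first slot that makes $a\mapsto (E_a,f)_B=f(\ba)$ antiholomorphic.
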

\begin{proof}
Since the pole of $E_a(z)$ is located at $z = a^{-1} \in \C \setminus \overline{\bD}$, $\pa_a^n E_a(z) \in A^2(\bD)$. Since
\begin{align*}
\pa_a^n E_a(z)&= \frac{(n+1)!z^n}{(1-az)^{2+n}}
=\sum_{k \geq 0}\frac{(n+k+1)!}{k!} a^k z^{n+k},
\end{align*}
for any $f(z)=\sum_{k\geq 0}c_k z^k$ we have,
\begin{align*}
(\pa_a^n E_a(z),f) 
= \sum_{k \geq 0} \frac{(n+k)!}{k!}c_{n+k} \bar{a}^k 
= (\pa_z^n f)(\ba).
\end{align*}
%$E_a(z)$の極は$z= a^{-1} \in \C \setminus \overline{\bD}$にあることから、$\pa_a^n E_a(z) \in A^2(\bD)$は明らかである。$f(z)=\sum_{k\geq 0}c_k z^k$に対して
%\begin{align*}
%\pa_a^n E_a(z)&= \frac{(n+1)!z^n}{(1-az)^{2+n}}
%=\sum_{k \geq 0}\frac{(n+k+1)!}{k!} a^k z^{n+k}
%\end{align*}
%より
%\begin{align*}
%(\pa_a^n E_a(z),f) = \sum_{k \geq 0} \frac{(n+k)!}{k!}c_{n+k} \bar{a}^k = (\pa_z^n f)(\ba).
%\end{align*}
\end{proof}

For any $p \geq 1$, let $A^2(\bD)^{\hotimes p}$ be the tensor product of Hilbert spaces,
which is the Hilbert space completion of the algebraic tensor product.
Let $\mathrm{Sym}^p A^2(\bD)$ is the closed subspace of $A^2(\bD)^{\hotimes p}$
defined by the image of the (completed) orthogonal projection
\begin{align}
\bs^p:A^2(\bD)^{\hotimes p} \rightarrow A^2(\bD)^{\hotimes p},\quad (f_1\otimes \dots  \otimes f_n)\mapsto
\frac{1}{p!}\sum_{\si \in S_p} f_{\si(1)}\otimes \dots  \otimes f_{\si(p)}.\label{eq_symetrizer}
\end{align}
Set
\begin{align*}
H^k&=\bigoplus_{p = 0}^k \mathrm{Sym}^p A^2(\bD),\\
\Sym(A^2(\bD))& = \bigoplus_{p \geq 0} \mathrm{Sym}^p A^2(\bD),
\end{align*}
the {algebraic} direct sum. 
Here $\Sym^0 A^2(\bD)$ is a one-dimensional Hilbert space, and throughout this paper we denote its normalized basis vector by $\va$.
%ここで$\Sym^0 A^2(\bD)$は一次元のヒルベルト空間であり、この論文を通じてその正規直交基底を$\va$とかく。
Note that for any $k\geq 0$, $H^k$ is a Hilbert space, while $\Sym(A^2(\bD))$ is not complete.
%The purpose of this paper is to define a conformally flat $d$-disk algebra structure on $\Sym(A^2(\bD))$.
%この論文の目的は$\Sym(A^2(\bD))$上に、共形平坦d-disk 代数の構造を定義することである。
Let $f_1,\dots,f_p \in A^2(\bD)$. Then, by
\begin{align}
f_1 (z_1)f_2(z_2) \cdots  f_p(z_p) \in \mathrm{Hol}(\bD^p) \cap L^2(\bD^p),
\label{eq_Bergman_tensor}
\end{align}
we have a linear map $A^2(\bD)^{\otimes p} \rightarrow A^2(\bD^p)$, which is isometric. Hence, we have:
\begin{lem}\label{lem_tensor_isomorphism}
For any $p \geq 1$, $A^2(\bD)^{\hotimes p} \rightarrow A^2(\bD^p)$ defined by \eqref{eq_Bergman_tensor} is an isometric isomorphism of Hilbert spaces. Moreover, the image of $\Sym^p(A^2(\bD))$ is given by
\begin{align*}
\{ F(z_1,\dots,z_n) \in A^2(\bD^p) \mid F(z_1,\dots,z_n)=F(z_{\si(1)},\dots,z_{\si(n)}) \text{ for any }\si \in S_n  \}.
\end{align*}
\end{lem}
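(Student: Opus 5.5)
The first step is to check that the map in \eqref{eq_Bergman_tensor} is isometric on the algebraic tensor product. For this I use the orthonormal basis $\{e_n\}_{n\ge0}$ of $A^2(\bD)$. The elementary tensors $e_{n_1}\otimes\cdots\otimes e_{n_p}$ form an orthonormal basis of $A^2(\bD)^{\hotimes p}$. Their images are the functions $e_{n_1}(z_1)\cdots e_{n_p}(z_p)$. By Fubini and the one-variable computation $(z^n,z^m)_B=\frac{1}{n+1}\delta_{n,m}$, these monomials are orthonormal in $L^2(\bD^p,d\mu)$. Hence the map sends an orthonormal system to an orthonormal system, so it is isometric on the algebraic tensor product. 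It therefore extends uniquely to an isometry $A^2(\bD)^{\hotimes p}\to A^2(\bD^p)$, and its image is closed.

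The main step is surjectivity, which amounts to completeness of the monomials $\prod_i e_{n_i}(z_i)$ in $A^2(\bD^p)$. Let $F\in A^2(\bD^p)$ and expand it as $F=\sum_{n} c_n z_1^{n_1}\cdots z_p^{n_p}$, which converges locally uniformly on $\bD^p$. I integrate over the polydisk of polyradius $(r,\dots,r)$ with $r<1$ and use orthogonality of the characters $e^{i(n-m)\cdot\theta}$ on the torus. This gives
\begin{align*}
\int_{|z_i|<r}|F|^2\,d\mu=\sum_n |c_n|^2\prod_i \frac{r^{2n_i+2}}{n_i+1}.
\end{align*}
By monotone convergence as $r\to1$, $\|F\|^2=\sum_n |c_n|^2\prod_i (n_i+1)^{-1}$. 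Thus $F$ equals the $L^2$-convergent series $\sum_n c_n\prod_i(n_i+1)^{-1/2}\,e_{n_1}(z_1)\cdots e_{n_p}(z_p)$, because the partial sums converge in $L^2$ and also locally uniformly to $F$. This shows $F$ lies in the closed span of the image, so the map is an isometric isomorphism. This is the only place where real analysis (the interchange of limits) is needed, so I expect it to be the main obstacle, though it is mild.

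For the symmetric part, I let $S_p$ act on $A^2(\bD^p)$ by permuting variables, $(\sigma F)(z)=F(z_{\sigma(1)},\dots,z_{\sigma(p)})$. Under the isomorphism this action corresponds to permuting tensor factors; it suffices to check this on elementary tensors and extend by continuity. Hence $\bs^p$ corresponds to the averaging projection $\frac{1}{p!}\sum_\sigma \sigma$ on $A^2(\bD^p)$. The image of an averaging projection is exactly the set of $S_p$-invariant vectors, because averaging fixes invariant vectors and produces invariant ones. The $S_p$-invariant vectors in $A^2(\bD^p)$ are precisely the symmetric functions, which gives the stated description of the image of $\Sym^p A^2(\bD)$.
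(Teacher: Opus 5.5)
Your proposal is correct and follows the same line as the paper. The paper only records that the map \eqref{eq_Bergman_tensor} is isometric on the algebraic tensor product (by Fubini, $(f_1(z_1)\cdots f_p(z_p),\,g_1(z_1)\cdots g_p(z_p))_{L^2}=\prod_i (f_i,g_i)_B$) and leaves surjectivity and the description of the symmetric part implicit; your polydisk Parseval argument for the density of polynomials in $A^2(\bD^p)$ and your averaging-projection argument supply exactly those omitted steps.

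One step needs tightening. Orthonormality of the images of the basis tensors $e_{n_1}\otimes\cdots\otimes e_{n_p}$ only gives an isometry on their finite span, and that span does not contain a general $f_1\otimes\cdots\otimes f_p$. To know that the continuous extension agrees with \eqref{eq_Bergman_tensor} on the whole algebraic tensor product, use the Fubini identity above directly. Alternatively, observe that the $L^2$-limit and the pointwise limit of the expansion of $f_1(z_1)\cdots f_p(z_p)$ coincide.
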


\subsection{Harmonic cocycles  and Hilbert-Schmidt conditions}\label{sec_pre_HS}
Motivated by the study of factorization algebras associated with the conformal Laplacian \cite[Section 2.3]{Mfactorization}, we consider, for $\phi \in \CEc(1)$,
\begin{align}
F_\phi(z,w) = \frac{\phi'(z)\phi'(w)}{(\phi(z)-\phi(w))^2} - \frac{1}{(z-w)^2}
\end{align}
and, for $(\phi_1,\phi_2) \in \CEc(2)$,
\begin{align*}
G_{\phi_1,\phi_2}(z,w) = \frac{\phi_1'(z)\phi_2'(w)}{(\phi_1(z)-\phi_2(w))^2}.
\end{align*}
Then $F_\phi(z,w)$ extends holomorphically across the diagonal $z=w$, and defines a holomorphic function on $\bD \times \bD$ \cite[Proposition 2.10]{Mfactorization}.
Moreover, since $\phi_1(\bD) \cap \phi_2(\bD) = \emptyset$, the function $G_{\phi_1,\phi_2}(z,w)$ also defines a holomorphic function on $\bD \times \bD$.
A straightforward computation shows the following:

\begin{prop}\label{prop_FG_equality}
For any $f_1,f_2 \in \CEc(1)$ and $(g_1,g_2) \in \CEc(2)$,
\begin{align}
F_{f_1 \circ f_2}(z,w) &= F_{f_1}(f_2(z),f_2(w))f_2'(z)f_2'(w) + F_{f_2}(z,w)\label{eq_F_cocycle}\\
G_{f\circ g_1,f\circ g_2}(z,w) &= F_{f}(g_1(z),g_2(w))g_1'(z)g_2'(w) + G_{g_1,g_2}(z,w)\label{eq_G_cocycle1}\\
G_{g_1 \circ f_1,g_2\circ f_2}(z,w)&=G_{g_1,g_2}(f_1(z),f_2(w))f_1'(z)f_2'(w).
\label{eq_G_cocycle2}
\end{align}
\end{prop}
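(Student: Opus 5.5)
The identities in Proposition~\ref{prop_FG_equality} are formal consequences of the chain rule, so the plan is to verify each one by direct substitution. The one analytic point is that all functions involved are holomorphic on $\bD\times\bD$ (for $F$ after removing the diagonal singularity). We therefore first prove each identity on the open dense set where all denominators are nonzero, meaning $z\neq w$ where relevant. The identity then extends to all of $\bD\times\bD$ by continuity, using the fact quoted above from \cite[Proposition 2.10]{Mfactorization} that $F_\phi$ extends holomorphically across the diagonal.

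For \eqref{eq_F_cocycle}, set $\phi=f_1\circ f_2$, so that $\phi'(z)=f_1'(f_2(z))f_2'(z)$. Then
\begin{align*}
\frac{\phi'(z)\phi'(w)}{(\phi(z)-\phi(w))^2}=\frac{f_1'(f_2(z))f_1'(f_2(w))}{(f_1(f_2(z))-f_1(f_2(w)))^2}\,f_2'(z)f_2'(w).
\end{align*}
Write the first factor as $F_{f_1}(f_2(z),f_2(w))+(f_2(z)-f_2(w))^{-2}$. The second piece, multiplied by $f_2'(z)f_2'(w)$, gives $\frac{f_2'(z)f_2'(w)}{(f_2(z)-f_2(w))^2}$. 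Subtracting $(z-w)^{-2}$ turns this into $F_{f_2}(z,w)$, which is the claimed identity. Injectivity of $f_2$ ensures $f_2(z)\neq f_2(w)$ for $z\neq w$, so every step is legitimate off the diagonal.

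For \eqref{eq_G_cocycle1}, write $G_{f\circ g_1,f\circ g_2}(z,w)=\frac{f'(g_1(z))f'(g_2(w))}{(f(g_1(z))-f(g_2(w)))^2}g_1'(z)g_2'(w)$. Decompose the first factor as $F_f(g_1(z),g_2(w))+(g_1(z)-g_2(w))^{-2}$; the second term combines with $g_1'(z)g_2'(w)$ to give exactly $G_{g_1,g_2}(z,w)$. Since $g_1(\bD)\cap g_2(\bD)=\emptyset$, we have $g_1(z)\neq g_2(w)$ everywhere. Moreover, $f$ is injective, so $f(g_1(z))\neq f(g_2(w))$. Hence there is no diagonal issue at all, and the identity holds on all of $\bD\times\bD$.

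For \eqref{eq_G_cocycle2}, the identity is the chain rule applied to $(g_i\circ f_i)'=g_i'(f_i)f_i'$, and nothing needs to be subtracted. There is no genuine obstacle anywhere in the argument. The only step requiring care is the extension of \eqref{eq_F_cocycle} across the diagonal $z=w$, which is handled by continuity of the holomorphic extensions.
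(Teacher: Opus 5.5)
Your proposal is correct and is exactly the ``straightforward computation'' the paper invokes: the chain rule together with the decomposition $\frac{f'(u)f'(v)}{(f(u)-f(v))^2}=F_f(u,v)+\frac{1}{(u-v)^2}$, and, for \eqref{eq_F_cocycle}, an extension from $z\neq w$ to the diagonal using the holomorphic extension of $F_\phi$. The statement never quantifies $f$ in \eqref{eq_G_cocycle1}, and you rightly read it as $f\in\CEc(1)$.
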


%As observed in \cite{MLeft} for $d \geq 3$, the naive operad $\CEc$ does not allow one to define a conformally flat $d$-disk algebra structure on $\Sym(A^2(\bD))$.
%\cite{MLeft}で$d \geq 3$において観測されたように、naive なoperad$\CEc$の元に対して、対応する operad の積は$\Sym(A^2(\bD))$上で well-defined 二ならない。
As observed in \cite{MLeft} for $d \geq 3$, for elements of the naive operad $\CEc$, the corresponding operadic multiplication is not well-defined on $\Sym(A^2(\bD))$.
As we will see later, in order to define an algebra structure on $\Sym(A^2(\bD))$, it is essential that these functions be square-integrable, 
more precisely,
\begin{align}
F_\phi(z,w), G_{\phi_1,\phi_2}(z,w) \in A^2(\bD \times \bD).
\label{eq_square}
\end{align}
In this section, we introduce a suboperad $\CEh$ of $\CEc$ satisfying \eqref{eq_square}.
We also examine its relationship with the suboperad $\CE$ of $\CEc$ introduced in \cite{MLeft}.

%
%
%共形ラプラシアンに付随する因子化代数の研究 \cite{Mfactorization} に動機を受けて, 我々は$\phi \in \CEc(1)$に対して
%\begin{align}
%F_\phi(z,w) = \frac{\phi'(z)\phi'(w)}{(\phi(z)-\phi(w))^2} - \frac{1}{(z-w)^2}
%\end{align}
%and $\phi_1,\phi_2 \in \CEc(2)$に対して
%\begin{align*}
%G_{\phi_1,\phi_2}(z,w) = \frac{\phi_1'(z)\phi_2'(w)}{(\phi_1(z)-\phi_2(w))^2}
%\end{align*}
%を考える。このとき$F_\phi(z,w)$は$z=w$上でも正則になり、$\bD \times \bD$上の正則関数を定めることが分かる \cite[]{Mfactorization}. 
%また$G_{\phi_1,\phi_2}(z,w)$も$\phi_1(\bD) \cap \phi_2(\bD) = \emptyset$であることから、$\bD \times \bD$上の正則関数を定める。簡単な計算により次が成り立つ:
%\begin{prop}\label{prop_FG_equality}
%For any $f_1,f_2 \in \CEc(1)$ and $(g_1,g_2) \in \CEc(2)$,
%\begin{align}
%F_{f_1 \circ f_2}(z,w) &= F_{f_1}(f_2(z),f_2(w))f_2'(z)f_2'(w) + F_{f_2}(z,w)\label{eq_F_cocycle}\\
%G_{f\circ g_1,f\circ g_2}(z,w) &= F_{f}(g_1(z),g_2(w))g_1'(z)g_2'(w) + G_{g_1,g_2}(z,w)\label{eq_G_cocycle1}\\
%G_{g_1 \circ f_1,g_2\circ f_2}(z,w)&=G_{g_1,g_2}(f_1(z),f_2(w))f_1'(z)f_2'(w).
%\label{eq_G_cocycle2}
%\end{align}
%\end{prop}

%後で見るように、$\Sym(A^2(\bD))$上に代数構造を定義するには、これらの関数が square-integrable であること, すなわち
%\begin{align}
%F_\phi(z,w), G_{\phi_1,\phi_2}(z,w) \in A^2(\bD \times \bD)\label{eq_square}
%\end{align}
%が本質的である。この章では \eqref{eq_square}を満たす$\CEc$-operad の suboperad $\CEh$を導入する。
%また、その\cite{}で導入された $\CEc$の suboperad $\CE$ との関係を調べる。

Let $\phi \in \CEc(1)$. Then, $F_\phi(z,w)$ is a holomorphic function on $\bD \times \bD$, but not always extends onto a continuous function on $\overline{\bD}\times \overline{\bD}$ and $F_\phi(z,w) \notin A^2(\bD \times \bD)$. In fact, set
\begin{align*}
F_\phi(z,w) = \sum_{n,m \geq 0}f_{n,m}e_n(z)e_m(w).
\end{align*}
Then, the condition $F_\phi(z,w) \in L^2(\bD\times \bD)$ is equivalent to $\sum_{n,m \geq 0} |f_{n,m}|^2 <\infty$.
We set
\begin{align*}
\CEh(1) = \{\phi \in \CEc(1) \mid F_\phi(z,w) \in L^2(\bD \times \bD) \}.
\end{align*}
The following proposition is well known (see, for example, \cite[Proposition 2.10]{Mfactorization}).
%このときに$F_\phi(z,w) \in L^2(\bD\times \bD)$であることと、$\sum_{n,m \geq 0} |d_{n,m}|^2 <\infty$は同値である。
%ここで
%\begin{align*}
%\CEh(1) = \{\phi \in \CEc(1) \mid F_\phi(z,w) \in L^2(\bD \times \bD) \}
%\end{align*}
%とおく。次の命題はよく知られている (see for example \cite[]{Mfactorization}).
\begin{prop}\label{prop_cocycle_vanish}
Let $\phi \in \CEc(1)$. Then, $F_\phi(z,w)=0$ if and only if $\phi$ is a linear fractional transformation, that is, there is $\begin{pmatrix}
   a & b \\
   c & d
\end{pmatrix} \in \mathrm{SL}_2\C$ such that $\phi(z) = \frac{az+b}{cz+d}$.
\end{prop}
%\begin{proof}
%For $\phi(z) = \frac{az+b}{cz+d}$, it is easy to see that
%\begin{align*}
%\left(\frac{az+b}{cz+d}-\frac{aw+b}{cw+d}\right) = \frac{1}{(cz+d)(cw+d)}(z-w).
%\end{align*}
%Hence, $F_\phi=0$. Conversely, assume $F_\phi=0$.
%It is easy to see that $\lim_{w \to z} F_\phi(z,w)=\frac{1}{6}S_{\phi}$,
%where $S_\phi$ is the Schwarzian derivative of $\phi$. Hence, $F_\phi=0$ implies that $S_\phi=0$. Hence, $\phi$ is a linear fractional transformation.
%\end{proof}
By Proposition~\ref{prop_cocycle_vanish}, the set $\CEh(1)$ contains $\fG$; in particular, $\id_\bD \in \CEh(1)$.
%命題\ref{prop_cocycle_vanish}より$\CEh(1)$は$\mathrm{PSU}(1,1)$を含み、とくに$\id_\bD \in \CEh(1)$である。このとき次が分かる:
\begin{prop}\label{prop_monoid_HS}
For any $\phi_1, \phi_2 \in \CEh(1)$, $\norm{F_{\phi_1 \circ \phi_2}(z,w)}_{L^2} \leq \norm{F_{\phi_1}(z,w)}_{L^2}+\norm{F_{\phi_2}(z,w)}_{L^2}$.
In particular, $\CEh(1)$ is a submonoid of $\CEc(1)$.
\end{prop}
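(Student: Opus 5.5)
We start from the cocycle identity \eqref{eq_F_cocycle} of Proposition~\ref{prop_FG_equality}:
\begin{align*}
F_{\phi_1\circ\phi_2}(z,w) = F_{\phi_1}(\phi_2(z),\phi_2(w))\,\phi_2'(z)\phi_2'(w) + F_{\phi_2}(z,w).
\end{align*}
By Minkowski's inequality in $L^2(\bD\times\bD,d\mu)$, it then suffices to show that
\begin{align*}
\norm{F_{\phi_1}(\phi_2(z),\phi_2(w))\phi_2'(z)\phi_2'(w)}_{L^2(\bD\times\bD)} \le \norm{F_{\phi_1}}_{L^2(\bD\times\bD)}.
\end{align*}

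For this we use the change of variables $(u,v)=(\phi_2(z),\phi_2(w))$. Since $\phi_2$ is an injective holomorphic map $\bD\to\bD$, the map $(z,w)\mapsto(\phi_2(z),\phi_2(w))$ is a diffeomorphism of $\bD\times\bD$ onto the open set $\phi_2(\bD)\times\phi_2(\bD)$. Its real Jacobian is $|\phi_2'(z)|^2|\phi_2'(w)|^2$. Therefore
\begin{align*}
\int_{\bD\times\bD}|F_{\phi_1}(\phi_2(z),\phi_2(w))|^2|\phi_2'(z)|^2|\phi_2'(w)|^2\,d\mu
=\int_{\phi_2(\bD)\times\phi_2(\bD)}|F_{\phi_1}(u,v)|^2\,d\mu
\le \int_{\bD\times\bD}|F_{\phi_1}|^2\,d\mu .
\end{align*}
The last inequality holds because the integrand is nonnegative and $\phi_2(\bD)\subset\bD$. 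This proves the norm inequality.

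For the submonoid statement: $\id_\bD\in\CEh(1)$ by Proposition~\ref{prop_cocycle_vanish}. Closure under composition follows from the inequality just proved, since the right-hand side is finite when $\phi_1,\phi_2\in\CEh(1)$. Note also that $\phi_1\circ\phi_2$ is again injective and holomorphic, so it lies in $\CEc(1)$.

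The only delicate point is justifying the cocycle identity pointwise across the diagonal, where both terms have removable singularities. This is handled by holomorphy: the identity holds off the diagonal and both sides are holomorphic on $\bD\times\bD$, so by continuity it holds everywhere. The diagonal in any case has measure zero, so it does not affect the $L^2$ norms. The change of variables itself is routine, because injectivity makes the map a genuine diffeomorphism onto its image, so no multiplicity counting is needed.
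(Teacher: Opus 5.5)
Your proposal is correct and follows the paper's proof: the cocycle identity \eqref{eq_F_cocycle}, then the triangle inequality in $L^2$, then the change of variables $(z,w)\mapsto(\phi_2(z),\phi_2(w))$ together with $\phi_2(\bD)\times\phi_2(\bD)\subset\bD\times\bD$, exactly as in \eqref{eq_cocycle_L2}. Your remarks on the diagonal and on $\id_\bD\in\CEh(1)$ via Proposition~\ref{prop_cocycle_vanish} add details that the paper leaves implicit.
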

\begin{proof}
Let $\phi_1, \phi_2 \in \CEh(1)$. Then, by Proposition \ref{prop_FG_equality} \eqref{eq_F_cocycle}, we have
\begin{align*}
\norm{F_{\phi_1 \circ \phi_2}(z,w)}_{L^2} \leq \norm{F_{\phi_1}(\phi_2(z),\phi_2(w))\phi_2'(z)\phi_2'(w)}_{L^2}+\norm{F_{\phi_2}(z,w)}_{L^2}.
\end{align*}
Hence, the assertion follows from
\begin{align}
\begin{split}
&\int_{\bD \times \bD} |F_{\phi_1}(\phi_2(z),\phi_2(w))\phi_2'(z)\phi_2'(w)|^2 d\mu(z)d\mu(w)\\
 &=
\int_{\phi_2(\bD) \times \phi_2(\bD)} |F_{\phi_1}(z',w')|^2 d\mu(z')d\mu(w')
\leq \int_{\bD \times \bD} |F_{\phi_1}(z',w')|^2 d\mu(z')d\mu(w') = \norm{F_{\phi_1}(z,w)}_{L^2}^2.
\end{split}
\label{eq_cocycle_L2}
\end{align}
\end{proof}
%
%\begin{rem}
%行列$\{d_{nm}\}_{n,m \geq 0}$は geometric function theory において Grunsky matrix と呼ばれる重要な matrix であり、
%$\sum_{n,m} |d_{nm}|^2<\infty$であることと、行列が Hilbert-Schmidt operator であることは同値である。
%Takhtajan-Teo と Shen の独立した研究では、ある条件の下で $\phi$ に付随する Grunsky matrix が 
%Hilbert-Schmidt operator であることと、$\phi$が Weil-Petersson class であることと同値であることが示された (\cite[Chapter 2.2]{TT} and \cite[Theorem 3]{Shen})。
%Takhtajan-Teo はこうした Weil-Petersson class に入る正則写像たちを元に、
%？？？が可能な universal Teichm\"{u}ller 空間の理論を構築した。
%こうした Takhtajan-Teo の universal Teichm\"{u}ller 空間の理論、および \cite{} ?? は、
%我々の operad $\CEh$によって定義される \todo
%
%まで拡張されたより適切な圏 $\Embc$ の定式化と、その factorization homology 理論の universal Teichm\"{u}ller 空間との関係や Segal CFT
%のと関係を示唆する。こうした研究は将来の仕事とする。
%\end{rem}
%
In general, for $n \geq 0$, we set
\begin{align*}
\CEh(n) = \{(\phi_1,\dots,\phi_n) \in \CEh(1)^n \cap \CEc(n) \mid G_{\phi_i,\phi_j}(z,w) \in L^2(\bD \times \bD) \text{ for any }i\neq j \}.
\end{align*}

\begin{prop}\label{prop_CE_HS}
$\CEh$ is a suboperad of $\CEc$.
\end{prop}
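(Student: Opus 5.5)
We need to check that $\CEh$ contains the unit, is closed under symmetric group actions, and is closed under the partial compositions $\circ_i$, including composition with $\CEh(0)=\{*\}$. The unit $\id_\bD$ lies in $\CEh(1)$ by Proposition~\ref{prop_cocycle_vanish}. Permutations preserve the defining conditions, since these are symmetric in the pairs $(i,j)$. Composition with $*$ simply deletes one component, so it preserves all remaining conditions. The substance therefore lies in showing that for $(f_1,\dots,f_n)\in\CEh(n)$ and $(g_1,\dots,g_m)\in\CEh(m)$, the tuple
\[
(f_1,\dots,f_{i-1},f_ig_1,\dots,f_ig_m,f_{i+1},\dots,f_n)
\]
lies in $\CEh(n+m-1)$. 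It already lies in $\CEc(n+m-1)$, since $\CEc$ is an operad.

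Each entry lies in $\CEh(1)$: this is immediate for the $f_k$, and for $f_i\circ g_l$ it follows from Proposition~\ref{prop_monoid_HS}. For the pairwise $G$-conditions, we split into three types of pairs.

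First, pairs $(f_k,f_{k'})$ with $k,k'\neq i$ satisfy the condition by hypothesis.

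Second, consider pairs $(f_i g_l, f_i g_{l'})$ with $l\neq l'$. By \eqref{eq_G_cocycle1},
\[
G_{f_ig_l,f_ig_{l'}}(z,w)=F_{f_i}(g_l(z),g_{l'}(w))\,g_l'(z)g_{l'}'(w)+G_{g_l,g_{l'}}(z,w).
\]
The second term is in $L^2$ by hypothesis. For the first term, change variables $z'=g_l(z)$, $w'=g_{l'}(w)$, exactly as in \eqref{eq_cocycle_L2}. The Jacobians are $|g_l'|^2$ and $|g_{l'}'|^2$, and $g_l$, $g_{l'}$ are injective, so the squared $L^2$ norm equals the integral of $|F_{f_i}|^2$ over $g_l(\bD)\times g_{l'}(\bD)\subset\bD\times\bD$. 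This is bounded by $\norm{F_{f_i}}_{L^2}^2$.

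Third, consider pairs $(f_k, f_i g_l)$ with $k\neq i$, or the reversed order. Write $f_k=f_k\circ\id_\bD$. Then \eqref{eq_G_cocycle2} gives
\[
G_{f_k,f_ig_l}(z,w)=G_{f_k,f_i}(z,g_l(w))\,g_l'(w).
\]
Changing variables in $w$ alone via the injective map $g_l$ bounds its $L^2$ norm by $\norm{G_{f_k,f_i}}_{L^2}$. The reversed order is handled symmetrically, or by noting that $G_{\phi_1,\phi_2}(z,w)=G_{\phi_2,\phi_1}(w,z)$.

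The only point requiring care is the change-of-variables step. Its justification is that a holomorphic injection $g$ has real Jacobian $|g'|^2$, so the integral over $\bD$ transforms into one over the image $g(\bD)$. Since the integrand $|F|^2$ or $|G|^2$ is nonnegative, restricting to the smaller domain can only decrease the integral. This is the same argument as in Proposition~\ref{prop_monoid_HS}.
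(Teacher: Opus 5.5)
Your proposal is correct and follows the paper's own proof. You reduce to closure under $\circ_i$, use Proposition~\ref{prop_monoid_HS} for the entries $f_ig_l$, and bound the two new kinds of $G$-terms via \eqref{eq_G_cocycle1} and \eqref{eq_G_cocycle2} together with the change of variables in \eqref{eq_cocycle_L2}. You also spell out the details the paper leaves to the reader: the unit, the symmetric group action, composition with $*$, and the explicit change-of-variables bounds.
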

\begin{proof}
Let $(f_1,\dots,f_n) \in \CEh(n)$ and $(g_1,\dots,g_m) \in \CEh(m)$.
Then
$(f_1,\dots,f_n) \circ_i (g_1,\dots,g_m) = (f_1,\dots,f_{i}g_1,\dots,f_ig_m,\dots,f_n)$
is in $\CEc(n+m-1)$.
By Proposition~\ref{prop_monoid_HS}, we have $f_i g_j \in \CEh(1)$.
Hence, it suffices to show that $G_{f_ig_k,f_ig_l}(z,w)$ and $G_{f_j,f_ig_k}(z,w)$ belong to $L^2(\bD \times \bD)$, and this follows from Proposition~\ref{prop_FG_equality} and the similar computation as in \eqref{eq_cocycle_L2}.
\end{proof}

%一般に$n \geq 0$に対して、
%\begin{align*}
%\CEh(n) = \{(\phi_1,\dots,\phi_n) \in \CEh(1)^n \cap \CEc(n) \mid G_{\phi_i,\phi_j}(z,w) \in L^2(\bD \times \bD) \text{ for any }i\neq j \}
%\end{align*}
%とおく。
%
%\begin{prop}\label{prop_CE_HS}
%$\CEh$は$\CEc$の suboperad である。
%\end{prop}
%\begin{proof}
%Let $(f_1,\dots,f_n) \in \CEh(n)$ and  $(g_1,\dots,g_m) \in \CEh(m)$とする。このとき
%$(f_1,\dots,f_n) \circ_i (g_1,\dots,g_m) = (f_1,\dots,f_{i}g_1,\dots,f_ig_m,\dots,f_n) \in \CEh(n+m-1)$.
%ここで命題\ref{prop_monoid_HS}より$f_i g_j \in \CEh(1)$である。
%そこで$G_{f_ig_k,f_ig_l}(z,w), G_{f_j,f_ig_k} \in L^2(\bD \times \bD)$ を示せば十分であるが、これは
% Proposition \ref{prop_FG_equality}
%and \eqref{eq_cocycle_L2}と同様の計算から従う。
%\end{proof}
%
%
%最後に$\CEh$と、\cite{}で導入された operad $\CE$の関係を調べる。まず初めに\cite{}に従い$\CE$の定義を簡単に振り返る。

Finally, we examine the relationship between $\CEh$ and the operad $\CE$ introduced in \cite[Section 1.1]{MLeft}.
To begin, we briefly review the definition of $\CE$.
Recall that $\CE(1)$ is a subset of $\CEc(1)$ consisting of $f \in \CEc(1)$ such that
\begin{itemize}
\item
there is a open neighborhood $U$ of $\overline{\bD}$ and injective holomorphic map $\tilde{f}:U \rightarrow \C$ which satisfies $\tilde{f}|_\bD = f$.
\end{itemize}

The following lemma is proved in \cite[Proposition 1.6]{MLeft}, but we give an explicit proof:
\begin{lem}\label{lem_CE2}
Let $f,g \in \CE(1)$. Then, $f \circ g \in \CE(1)$. In particular, $\CE(1)$ is a submonoid of $\CEc$.
\end{lem}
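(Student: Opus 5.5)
We need extensions $\tilde f: U \to \C$ and $\tilde g: V \to \C$, defined on open neighborhoods $U, V$ of $\overline{\bD}$, that are injective holomorphic and restrict to $f$ and $g$ on $\bD$. The plan is to build an injective holomorphic extension of $f\circ g$ to some neighborhood $W$ of $\overline{\bD}$ by composing $\tilde f$ with $\tilde g$. The only issue is that $\tilde g$ may map points near $\overline{\bD}$ outside $U$, so $W$ must be shrunk carefully.

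First, since $g(\bD)\subset \bD$ and $\tilde g$ is continuous, we get $\tilde g(\overline{\bD}) \subset \overline{g(\bD)} \subset \overline{\bD} \subset U$. Thus $\tilde g(\overline{\bD})$ is a compact subset of the open set $U$. Then $W = \tilde g^{-1}(U)\cap V$ is open, since $\tilde g$ is continuous and $U$ is open, and it contains $\overline{\bD}$ by the inclusion just shown. So $W$ is an open neighborhood of $\overline{\bD}$. If one wants a disk, one may instead take $W = B_{1+\epsilon}(0)$ for small $\epsilon>0$, which exists by compactness of $\overline{\bD}$ (a tube-lemma argument: $\overline{\bD}$ has positive distance from the closed complement of the open set $\tilde g^{-1}(U)\cap V$).

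Next, define $h = \tilde f\circ \tilde g|_W : W \to \C$. It is holomorphic as a composition of holomorphic maps, and injective as a composition of injective maps, since $\tilde g|_W$ is injective with values in $U$ and $\tilde f$ is injective on $U$. On $\bD$ we have $\tilde g = g$ with values in $\bD$, where $\tilde f = f$, so $h|_{\bD} = f\circ g$. Also $f\circ g \in \CEc(1)$ because $\CEc(1)$ is a monoid. Hence $f\circ g\in\CE(1)$.

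Finally, $\id_{\bD}$ extends to the identity on $\C$, so it lies in $\CE(1)$; together with closure under composition this makes $\CE(1)$ a submonoid. The only slightly delicate step is the inclusion $\tilde g(\overline{\bD})\subset U$, and it follows directly from continuity and $g(\bD)\subset\bD$.
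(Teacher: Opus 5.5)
Your proof is correct and follows the paper's argument: you take the neighborhood $\tilde g^{-1}(U)\cap V$ of $\overline{\bD}$, using $\tilde g(\overline{\bD})\subset\overline{g(\bD)}\subset\overline{\bD}\subset U$, and restrict $\tilde f\circ\tilde g$ to it. You also check explicitly that $\id_{\bD}\in\CE(1)$, a point the paper leaves implicit.
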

\begin{proof}
It suffices to show that there is an extension of $f\circ g$.
Let $\tilde{f}:U \rightarrow \C$ and $\tilde{g}:V \rightarrow \C$ be the extensions.
Since $\tilde{g}(\overline{\bD}) = \overline{g(\bD)} \subset \overline{\bD}$ and $\overline{\bD} \subset U$,
$\tilde{g}^{-1}(U) \cap V$ is an open neighborhood of $\overline{\bD}$.
Then, $\tilde{f}|_{U \cap \tilde{g}(V)} \circ \tilde{g}: \tilde{g}^{-1}(U) \cap V \rightarrow \C$
is an injective holomorphic map. Hence, the assertion holds.
\end{proof}

Fo any $n \geq 0$, set
\begin{align*}
\CE(n)=\{(f_1,\dots,f_n) \in \CE(1)^n \mid \overline{f_i(\bD)}\cap \overline{f_j(\bD)}=\emptyset\text{ for }i\neq j \}.
\end{align*}
Here, $\overline{f_i(\bD)}$ is the closure of $f_i(\bD) \subset \C$. Then, $\CE$ is a suboperad of $\CEc$ (see \cite[Proposition 1.6]{MLeft}).
\begin{prop}\label{prop_sub_CEh}
As suboperads of $\CEc$, $\CE \subset \CEh$. In particular, for any $f \in \CE(1)$ and $(g_1,\dots,g_n) \in \CE(n)$,
\begin{align*}
F_{f}(z,w), \; G_{g_i,g_j}(z,w) \in L^2(\bD \times \bD)\qquad \text{ for any }i\neq j.
\end{align*}
Moreover, both $\CE$ and $\CEh$ are stable under the automorphism $\J$ of $\CEc$.
%$\CEc$の suboperads として$\CE \subset \CEh$が成り立つ。とくに任意の$f \in \CE(1)$ and $(g_1,\dots,g_n) \in \CE(n)$に対して、
%\begin{align*}
%F_{f}(z,w), G_{g_i,g_j}(z,w) \in L^2(\bD \times \bD)
%\end{align*}
%が $i \neq j$に対して成り立つ。
\end{prop}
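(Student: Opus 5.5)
The plan is to show that the relevant functions extend holomorphically (or at least continuously) to a neighborhood of $\overline{\bD}\times\overline{\bD}$. A continuous function on the compact set $\overline{\bD}\times\overline{\bD}$ is bounded, hence lies in $L^2(\bD\times\bD,d\mu)$.

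First, take $f\in\CE(1)$ with injective holomorphic extension $\tilde f:U\to\C$, where $U\supset\overline{\bD}$ is open. We may shrink $U$ to a disk $B_R(0)$ with $R>1$. Define
\[
\tilde F(z,w)=\frac{\tilde f'(z)\tilde f'(w)}{(\tilde f(z)-\tilde f(w))^2}-\frac{1}{(z-w)^2}\quad\text{on } U\times U .
\]
Injectivity of $\tilde f$ means the denominator vanishes only on the diagonal. The same local argument as in \cite[Proposition 2.10]{Mfactorization} shows that the singularity on the diagonal is removable. Expanding $\tilde f(z)-\tilde f(w)=(z-w)\tilde f'(w)(1+O(z-w))$ with $\tilde f'(w)\neq 0$, the double poles cancel. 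The residual simple-pole term vanishes by the symmetry in $z$ and $w$, since $F$ is symmetric and the coefficient of $(z-w)^{-1}$ would be antisymmetric. Alternatively, one can apply Hartogs/Riemann extension after checking local boundedness. Hence $\tilde F$ is holomorphic on $U\times U$, so it is bounded on $\overline{\bD}\times\overline{\bD}$, and therefore $F_f=\tilde F|_{\bD\times\bD}\in L^2$. This is the only mildly delicate step.

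Second, take $(g_1,\dots,g_n)\in\CE(n)$ and $i\neq j$. The sets $K_i=\overline{g_i(\bD)}=\tilde g_i(\overline{\bD})$ and $K_j$ are disjoint compact sets, so $\delta=\mathrm{dist}(K_i,K_j)>0$. Then for $z,w\in\bD$ we have $|g_i(z)-g_j(w)|\ge\delta$. The derivatives $g_i'$ and $g_j'$ are bounded on $\bD$ because they extend continuously to $\overline{\bD}$. Therefore
\[
|G_{g_i,g_j}(z,w)|\le \delta^{-2}\sup_{\bD}|\tilde g_i'|\,\sup_{\bD}|\tilde g_j'| ,
\]
so $G_{g_i,g_j}\in L^2(\bD\times\bD)$, because $\mu$ is finite. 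Together with the first step, this gives $\CE(n)\subset\CEh(n)$ for every $n$. Since both are suboperads of $\CEc$ (Proposition~\ref{prop_CE_HS} and \cite[Proposition 1.6]{MLeft}), the inclusion is one of suboperads.

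Finally, for $\J$-stability: if $\tilde f$ extends $f$ on $U$, then $z\mapsto\overline{\tilde f(\bar z)}$ is an injective holomorphic extension of $\J(f)$ on $\bar U$, which is a neighborhood of $\overline{\bD}$. Closures of images are conjugated by $z\mapsto\bar z$, so disjointness is preserved and $\CE$ is $\J$-stable. For $\CEh$, a direct computation gives
\[
F_{\J(\phi)}(z,w)=\overline{F_\phi(\bar z,\bar w)}, \qquad G_{\J(\phi_1),\J(\phi_2)}(z,w)=\overline{G_{\phi_1,\phi_2}(\bar z,\bar w)} .
\]
The measure $\mu$ is invariant under conjugation, so the $L^2$ norms are equal. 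Disjointness of images is again preserved, so $\J(\CEh(n))=\CEh(n)$.
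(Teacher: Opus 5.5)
Your proof is correct and follows essentially the same route as the paper. You use the holomorphic extension of $F_f$ across the diagonal to a neighborhood of $\overline{\bD}\times\overline{\bD}$ for boundedness, and the positive distance $\delta$ between the closed images to control $G_{g_i,g_j}$. The paper instead bounds $\int_{\bD\times\bD}|G_{g_i,g_j}|^2\,d\mu$ via $|g_i-g_j|\geq\delta$ and the area formula $\int_{\bD}|g'|^2\,d\mu=\mu(g(\bD))$, which needs no sup bound on derivatives, and it leaves the removable-singularity and $\J$-stability steps to a citation or as ``clear'', whereas you carry them out.
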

\begin{proof}
Let $\tilde{f}:U \rightarrow \C$ be the extension.
Then, 
\begin{align*}
\tilde{F}_{\tilde{f}}(z,w) = \frac{\tilde{f}'(z)\tilde{f}'(w)}{(\tilde{f}(z)-\tilde{f}(w))^2} - \frac{1}{(z-w)^2}
\end{align*}
is a holomorphic function on $U \times U$. 
In particular, $F_f(z,w)$ extends to a continuous function on $\overline{\bD} \times \overline{\bD}$. Hence $F_{f}(z,w) \in L^2(\bD \times \bD)$.
Moreover, since $\overline{f_i(\bD)}\cap \overline{f_j(\bD)}=\emptyset$, 
\begin{align*}
\delta_{ij} = \inf_{z \in \overline{f_i(\bD)}, w \in \overline{f_j(\bD)}} |z-w|
\end{align*}
satisfies $\delta_{ij}>0$. Therefore,
\begin{align*}
\int_{\bD^2}|G_{g_i,g_j}(z,w)|^2d\mu \leq \frac{1}{\delta_{ij}^2} \int_{\bD^2}|g_i'(z)g_j'(w)|^2 d\mu =\frac{1}{\delta_{ij}^2} \mathrm{vol}(g_i(\bD) \times g_j(\bD))
\end{align*}
and it follows that $G_{g_i,g_j}(z,w) \in L^2(\bD \times \bD)$.
The last claim is clear.
%とくに$F_f(z,w)$は$\overline{\bD} \times \overline{\bD}$上に連続関数として延長される。よって$F_{f}(z,w) \in L^2(\bD \times \bD)$。
%また$\overline{f_i(\bD)}\cap \overline{f_j(\bD)}=\emptyset$であるから、
%\begin{align*}
%\delta_{ij} = \inf_{z \in \overline{f_i(\bD)}, w \in \overline{f_j(\bD)}} |z-w|
%\end{align*}
%は$\delta_{ij}>0$を満たす。よって
%\begin{align*}
%\int_{\bD^2}|G_{g_i,g_j}(z,w)|^2d\mu \leq \frac{1}{\delta_{ij}^2} \int_{\bD^2}|g_i'(z)g_j'(w)|^2 d\mu =\frac{1}{\delta_{ij}^2} \mathrm{vol}(g_i(\bD) \times g_j(\bD))
%\end{align*}
%より、$G_{g_i,g_j}(z,w) \in L^2(\bD \times \bD)$が従う。
\end{proof}

\section{Conformally flat 2-disk algebra on the Bergman space}
In this section, we show that $\Sym A^2(\bD)$ carries a $\CEh$-algebra structure.
This provides a completion, in the ind-Hilbert space setting, of the factorization algebra 
associated with the conformal Laplacian studied in \cite{Mfactorization}.

\subsection{Classical and quantum monoid action}\label{sec_monoid}

In this section, we construct a monoid homomorphism
$\CEh(1)\rightarrow \mathrm{End}(\Sym A^2(\bD))$.

Let $\phi \in \CEc$ and $f \in A^2(\bD)$. Since
\begin{align}
\int_\bD |\phi'(z)f(\phi(z))|^2 dz^2 = \int_{\phi(\bD)} |f(z)|^2 dz^2 \leq \int_{\bD} |f(z)|^2 dz^2,\label{eq_contraction_D}
\end{align}
\begin{align}
T_\phi: A^2(\bD) \rightarrow A^2(\bD),\quad f(z)\mapsto \phi'(z)\,(f\circ \phi)(z)
\label{eq_contraction}
\end{align}
is a bounded linear map with operator norm at most $1$.
For $\phi_1,\phi_2 \in \CEc$, we have
$T_{\phi_1}T_{\phi_2}(f)(z)
= (\phi_2'\circ \phi_1)(z)\,\phi_1'(z)\,(f \circ \phi_2 \circ \phi_1)(z)
= T_{\phi_2 \circ \phi_1}(f)(z)$.
Hence, 
$T:\CEc \rightarrow B(A^2(\bD))$
defines an anti-homomorphism of monoid.
We denote by $T_\phi^*:A^2(\bD) \rightarrow A^2(\bD)$ the adjoint of $T_\phi$.

%\begin{align}
%T_\phi: A^2(\bD) \rightarrow A^2(\bD),\quad f(z)\mapsto \phi'(z) (f\circ \phi)(z)
%\label{eq_contraction}
%\end{align}
%は、bounded linear map であり、operator norm は1以下である。
%ここで$\phi_1,\phi_2 \in \fS$に対して、
%$
%T_{\phi_1}T_{\phi_2}(f)(z) = (\phi_2'\circ \phi_1)(z) \phi_1'(z) (f \circ\phi_2 \circ \phi_1)(z)
%= T_{\phi_2 \circ \phi_1}(f)(z)$ であるから、
%$T:\fS \rightarrow B(A^2(\bD))$
%は semigroup の anti-homomorphism である。
%$T_\phi^*:A^2(\bD) \rightarrow A^2(\bD)$を$T_\phi$の随伴作用素とする。
%すなわち
%\begin{align*}
%(T_\phi^* f,g)_B = (f,T_\phi g)_B
%\end{align*}
%が任意の$f,g \in A^2(\bD)$に対して成り立つ。
For any $a\in \bD$, since $
(T_\phi^* E_a,f)_B = (E_a,T_\phi f)_B =
 (E_a,\phi' f\circ \phi )_B = \phi'(\ba) f(\phi(\ba))$, we have
\begin{align}
T_\phi^*(E_a) = \overline{\phi'(\ba)} E_{\overline{\phi(\ba)}}.\label{eq_T_star}
\end{align}
Define
$\rho_\cl:\CEc \rightarrow \bB(A^2(\bD))$
by
\begin{align*}
\rho_\cl(\phi) = T_{\J(\phi)}^*.
\end{align*}
Since $T_{\phi_1 \phi_2}^* = (T_{\phi_2}T_{\phi_1})^* = T_{\phi_1}^*T_{\phi_2}^*$
and $\J$ is an automorphism of $\CEc(1)$,
we have:
\begin{prop}\label{prop_monoid_classical}
The map $\rho_\cl:\CEc(1) \rightarrow \bB(A^2(\bD))$ is a monoid homomorphism
 satisfying
\begin{align*}
\rho_\cl(\phi)(E_a(z)) =\phi'(a)E_{\phi(a)}
\end{align*}
for any $a\in \bD$ and $\phi \in \CEc(1)$.
Moreover, the restriction $\rho_\cl|_{\fG}:\fG \rightarrow \bB(A^2(\bD))$ is a strongly continuous unitary representation.
\end{prop}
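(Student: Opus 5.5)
The statement has three parts: the homomorphism property, the formula on reproducing kernels, and the unitarity and strong continuity on $\fG$. I would handle them in that order.

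\emph{Homomorphism and kernel formula.} First, $\rho_\cl(\phi)=T_{\J(\phi)}^*$ is bounded, since $\norm{T_\psi}\le 1$ by \eqref{eq_contraction_D}. Composing,
\[
\rho_\cl(\phi_1)\rho_\cl(\phi_2)=T^*_{\J(\phi_1)}T^*_{\J(\phi_2)}=(T_{\J(\phi_2)}T_{\J(\phi_1)})^*=T^*_{\J(\phi_1)\circ\J(\phi_2)}=T^*_{\J(\phi_1\circ\phi_2)}.
\]
Here the third equality uses the anti-homomorphism property of $T$, and the last uses that $\J$ is a monoid homomorphism. The unit is preserved because $\J(\id)=\id$ and $T_{\id}=\id$. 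For the kernel formula, I apply \eqref{eq_T_star} with $\J(\phi)$ in place of $\phi$:
\[
T^*_{\J(\phi)}E_a=\overline{\J(\phi)'(\ba)}\,E_{\overline{\J(\phi)(\ba)}}.
\]
Since $\J(\phi)(\ba)=\overline{\phi(a)}$, we get $\J(\phi)'(\ba)=\overline{\phi'(a)}$, and so $\rho_\cl(\phi)E_a=\phi'(a)E_{\phi(a)}$.

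\emph{Unitarity on $\fG$.} For $g\in\fG$, $\rho_\cl(g)$ is invertible with inverse $\rho_\cl(g^{-1})$, by the homomorphism property. It therefore suffices to show it is isometric on the dense span of $\{E_a\}$ (Proposition~\ref{prop_dense}). By Proposition~\ref{prop_dense},
\[
(\rho_\cl(g)E_a,\rho_\cl(g)E_b)_B=\overline{g'(a)}\,g'(b)\,\frac{1}{(1-\overline{g(a)}g(b))^2}.
\]
Proposition~\ref{prop_conf_identity}, applied with $z=b$ and $w=a$, shows this equals $(1-\ba b)^{-2}=(E_a,E_b)_B$. The computation extends by sesquilinearity to finite sums, so $\rho_\cl(g)$ extends to an isometry. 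Being surjective, it is unitary.

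\emph{Strong continuity.} This is the step I expect to need the most care, though it is still routine. The operators $\rho_\cl(g)$ are uniformly bounded (unitary), so it suffices to check continuity of $g\mapsto\rho_\cl(g)E_a$ for each $a$, and then use density of the span of the $E_a$. The map $g\mapsto(g'(a),g(a))$ is continuous on $\fG$ in the parameters $(\theta,\al)$. Moreover,
\[
\norm{E_b-E_c}_B^2=(1-|b|^2)^{-2}+(1-|c|^2)^{-2}-2\,\mathrm{Re}\,(1-\bar b c)^{-2},
\]
which tends to $0$ as $c\to b$ in $\bD$. Hence $g\mapsto g'(a)E_{g(a)}$ is norm-continuous, and the usual $\epsilon/3$ argument gives strong continuity on all of $A^2(\bD)$.
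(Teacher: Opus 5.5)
Your proposal is correct and follows essentially the same route as the paper. The homomorphism property and the kernel formula come from the anti-homomorphism $T$, the homomorphism $\J$ and \eqref{eq_T_star}, and unitarity and strong continuity are checked on the kernels $E_a$ via Proposition~\ref{prop_conf_identity} and density. Your surjectivity argument via $\rho_\cl(g^{-1})$ and the explicit formula for $\norm{E_b-E_c}_B^2$ fill in details the paper leaves implicit or calls ``easy to see.''
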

\begin{proof}
It suffices to prove the last claim. Let $\phi \in \fG$. Since for any $a,b \in \bD$, by Proposition \ref{prop_dense} and Proposition \ref{prop_conf_identity},
\begin{align*}
(T_{\J(\phi)}^* E_a,T_{\J(\phi)}^* E_b)
&= \frac{\overline{\phi'(a)}{\phi'(b)}}{(1- \overline{\phi(a)}{\phi(b)})^2}
= \frac{1}{(1-\ba{b})^2}
= (E_a,E_b),
\end{align*}
and since $\{E_a\}_{a\in \bD} \subset A^2(\bD)$ spans a dense subspace, it follows that $T_\phi^*$ is unitary.
Hence $\rho_\cl|_{\fG}$ is a unitary representation.
For $E_a \in A^2(\bD)$, it is easy to see that $\rho_\cl(g)E_a \to E_a$ as $g \to 1$.
Since the subspace spanned by $\{E_a\}_{a\in \bD}$ is dense, strong continuity follows.
%
%
%
%最後の主張を示せば十分。
%Let $\phi \in \fG$. Since for any $a,b \in \bD$, by Proposition \ref{prop_dense} and Proposition \ref{prop_conf_identity},
%\begin{align*}
%(T_{\J(\phi)}^* E_a,T_{\J(\phi)}^* E_b)&= \frac{\overline{\phi'(a)}{\phi'(b)}}{(1- \overline{\phi(a)}{\phi(b)})^2}=\frac{1}{(1-\ba{b})^2} = (E_a,E_b).
%\end{align*}
%Since $\{E_a\}_{a\in \bD} \subset A^2(\bD)$ spans a dense subspace, $T_\phi^*$ is unitary.
%よって$\rho_\cl|_{\fG}$は unitary 表現である。
%$E_a \in A^2(\bD)$に対して、$\rho_\cl(g)E_a \to E_a$ for $g \to 1$は簡単に分かる。$\{E_a\}_{a\in \bD}$で張られる空間が dense であることから、強連続性は従う。
\end{proof}

Let $a \in \bD$ and $1>r>0$ satisfy $B_r(a) \subset \bD$.
Then,
\begin{align}
B_{a, r}(z) = rz+a \label{eq_def_B}
\end{align}
is an element of $\CE(1)$.
\begin{lem}\label{lem_translation_basis}
For any $n \geq 0$,
\begin{align*}
\rho_\cl(B_{a,r})(\sqrt{n+1}e_n(z)) =\frac{r^{n+1}}{n!} \pa_a^n E_a(z).
\end{align*}
\end{lem}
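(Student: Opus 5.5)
The plan is to derive the formula from the action of $\rho_\cl(B_{a,r})$ on the reproducing kernels $E_b$, given by Proposition~\ref{prop_monoid_classical}, by expanding both sides as power series in $b$ and comparing coefficients. Note first that $\sqrt{n+1}\,e_n(z)=(n+1)z^n$. By Proposition~\ref{prop_monoid_classical}, for every $b\in\bD$,
\begin{align*}
\rho_\cl(B_{a,r})(E_b)=B_{a,r}'(b)\,E_{B_{a,r}(b)} = r\,E_{a+rb}.
\end{align*}

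Next I expand both sides in $b$ near $0$. On the left, $E_b=\sum_{n\ge 0}(n+1)b^nz^n=\sum_{n \geq 0} b^n\sqrt{n+1}\,e_n$. Since $\norm{\sqrt{n+1}\,e_n}_B=\sqrt{n+1}$, this series converges in $A^2(\bD)$ for $|b|<1$. Because $\rho_\cl(B_{a,r})$ is bounded, it can be applied termwise, giving
\begin{align*}
\rho_\cl(B_{a,r})(E_b)=\sum_{n \geq 0} b^n\rho_\cl(B_{a,r})\bigl(\sqrt{n+1}\,e_n\bigr).
\end{align*}
On the right, I regard $a'\mapsto E_{a'}$ as an $A^2(\bD)$-valued holomorphic map on $\bD$ and Taylor expand around $a$:
\begin{align*}
r\,E_{a+rb}=\sum_{n\ge0}\frac{r^{n+1}b^n}{n!}\,\pa_a^nE_a,
\end{align*}
with $\pa_a^nE_a\in A^2(\bD)$ by Lemma~\ref{lem_kernel_derivative}.

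The main obstacle is justifying that this Taylor series converges in the $A^2$ norm for $|b|$ small. I would handle it in one of two ways. The first is to note that $E_{a'}(z)=(1-a'z)^{-2}$ is jointly holomorphic for $|a'|<1$, $|z|<1$ with $|a'z|<1$, and uniformly bounded for $a'$ in a small closed disk around $a$ and $z\in\bD$. Expanding in $a'$ around $a$ then gives uniform convergence on $\bD$, hence convergence in $A^2(\bD)$, via Cauchy estimates with radius $\delta<1-|a|$. The second is to pair with an arbitrary $f\in A^2(\bD)$: by Lemma~\ref{lem_kernel_derivative} and \eqref{eq_evaluation}, the right-hand side gives $\sum_n \frac{r^{n+1}\bar b^n}{n!}(\pa^n f)(\ba)=r f(\overline{a+rb})$, which is the Taylor series of an antiholomorphic function of $b$. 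This yields weak equality of two power series in $\bar b$.

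Finally, once both sides are power series in $b$ that agree for all sufficiently small $b$, comparing coefficients of $b^n$ gives the claim. Comparison is valid either by uniqueness of vector-valued power series, or by pairing with each $e_m$ and using uniqueness of scalar power series. I will also check that $b\in\bD$ small keeps $a+rb\in B_r(a)\subset\bD$, so every expression above is defined.
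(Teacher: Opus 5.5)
Your proof is correct, but it takes a different route from the paper's.

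\textbf{The paper's route.} The paper does not use the kernel formula at all. It pairs the basis vector directly with an arbitrary $f\in A^2(\bD)$ and uses adjointness:
$(\rho_\cl(B_{a,r})(\sqrt{n+1}e_n),f)_B=(\sqrt{n+1}e_n,T_{B_{\ba,r}}f)_B=\int_\bD\overline{(n+1)z^n}\,r f(rz+\ba)\,d\mu$.
Orthogonality of the monomials then picks out the $n$-th Taylor coefficient $\frac{r^{n+1}}{n!}f^{(n)}(\ba)$. Lemma~\ref{lem_kernel_derivative} identifies this with $(\frac{r^{n+1}}{n!}\pa_a^nE_a,f)_B$.

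\textbf{Your route.} You start from $\rho_\cl(B_{a,r})E_b=rE_{a+rb}$ (Proposition~\ref{prop_monoid_classical}). You treat $E_b=\sum_n b^n(n+1)z^n$ as a generating function for the vectors in question and compare coefficients of $b^n$.

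\textbf{Comparison.} The paper's argument is a one-line computation. It needs no vector-valued power series, no convergence bookkeeping and no coefficient comparison. Yours reuses the action on reproducing kernels that is already established. It also makes transparent that the lemma is just the Taylor expansion of $b\mapsto rE_{a+rb}$.

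Your weak variant is exactly the paper's computation, summed against $\bar b^n$. Indeed, $(\rho_\cl(B_{a,r})E_b,f)_B=(E_b,T_{B_{\ba,r}}f)_B=rf(\ba+r\bar b)$, and its $\bar b^n$-coefficient is the paper's quantity. This variant is already complete on its own. After pairing, both sides are scalar power series in $\bar b$ that converge for $|b|<1$ (using $r\le 1-|a|$). So the norm convergence of the Taylor series of $E_{a+rb}$ is never needed.

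If you prefer the norm version, there is a shortcut. The map $a'\mapsto E_{a'}=\sum_n a'^n\sqrt{n+1}\,e_n$ is already a norm-convergent $A^2(\bD)$-valued power series on $\bD$, hence holomorphic. Its Taylor expansion at $a$ therefore converges in norm on $|a'-a|<1-|a|$, with no Cauchy estimates needed.
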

\begin{proof}
For any $f\in A^2(\bD)$, by $f(rz+a)=\sum_{k \geq 0} \frac{1}{k!}f^{(k)}(a) (rz)^k$,
\begin{align*}
(\rho_\cl(B_{a, r})(\sqrt{n+1}e_n(z)),f)
&=(T_{\J(B_{a, r})}^*(\sqrt{n+1}e_n(z)),f)\\
&=(\sqrt{n+1}e_n(z),T_{B_{\ba, r}}f)\\
&= \int_{\bD} \overline{\sqrt{n+1}e_n(z)} rf(rz+\ba) d\mu=\frac{r^{n+1}}{n! }f^{(n)}(\ba).
\end{align*}
Hence, the proposition follows from Lemma \ref{lem_kernel_derivative}.
\end{proof}

We examine dilations for $r \in (0,1]$.
By Lemma \ref{lem_translation_basis},
\begin{align*}
\rho_\cl(B_{0,r})(z^n)
=\frac{r^{n+1}}{(n+1)!} \pa_a^n e_a(z) \big|_{a=0}
= r^{n+1} z^n.
\end{align*}
Hence $\{z^n \in A^2(\bD)\}_{n \geq 0}$
forms an orthonormal basis consisting of simultaneous eigenvectors for the dilations.
Hence, we have (see \cite[Definition 1.32]{MLeft}):
\begin{lem}\label{lem_dilation}
The dilation $\rho_\cl(B_{0,r})$ defines a strongly continuous, self-adjoint, contractive monoid homomorphism
\begin{align*}
(0,1] \rightarrow \bB(A^2(\bD)),
\end{align*}
and $z^n$ is a simultaneous eigenvector satisfying $\rho_\cl(B_{0,r})z^n = r^{n+1}z^n$.
Moreover, $\rho_\cl(B_{0,r})$ is a Hilbert--Schmidt operator for $r<1$.
\end{lem}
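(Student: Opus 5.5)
The proof reduces everything to the diagonal action of $\rho_\cl(B_{0,r})$ on the orthonormal basis $\{e_n\}_{n\geq 0}$. First I will check the eigenvalue relation directly, without going through Lemma~\ref{lem_translation_basis}. Since $\J(B_{0,r})=B_{0,r}$ because $r$ is real, we have $\rho_\cl(B_{0,r})=T_{B_{0,r}}^*$. Also $T_{B_{0,r}}f(z)=r f(rz)$, so $T_{B_{0,r}}z^n=r^{n+1}z^n$. Hence $T_{B_{0,r}}$ is diagonal in the orthonormal basis $e_n=\sqrt{n+1}z^n$, with real eigenvalues $r^{n+1}$. A bounded diagonal operator with real diagonal entries is self-adjoint, so $\rho_\cl(B_{0,r})=T_{B_{0,r}}^*=T_{B_{0,r}}$ and $\rho_\cl(B_{0,r})z^n=r^{n+1}z^n$.

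The monoid homomorphism property has two sources. One is Proposition~\ref{prop_monoid_classical} together with $B_{0,r}\circ B_{0,s}=B_{0,rs}$; the other is simply $r^{n+1}s^{n+1}=(rs)^{n+1}$. The unit is also fine, since $B_{0,1}=\id_\bD$ gives the identity operator. Contractivity follows either from \eqref{eq_contraction_D} or from $\sup_n r^{n+1}=r\le 1$.

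For strong continuity on $(0,1]$, take $f=\sum_n c_n e_n$. Then
\[
\norm{\rho_\cl(B_{0,r})f-\rho_\cl(B_{0,s})f}^2=\sum_n |r^{n+1}-s^{n+1}|^2|c_n|^2 .
\]
Each term tends to $0$ as $r\to s$ and is bounded by $4|c_n|^2$, so dominated convergence for sums gives convergence to $0$. This works at every $s\in(0,1]$, including $s=1$.

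The Hilbert--Schmidt norm is $\sum_n r^{2(n+1)}=r^2/(1-r^2)$, which is finite for $r<1$, so $\rho_\cl(B_{0,r})$ is Hilbert--Schmidt. There is no real obstacle here. The only point needing care is self-adjointness: the identification $\rho_\cl(B_{0,r})=T_{B_{0,r}}$ depends on $\J(B_{0,r})=B_{0,r}$ and on the eigenvalues being real.
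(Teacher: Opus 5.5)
Your proof is correct and follows the same route as the paper: diagonalize $\rho_{\cl}(B_{0,r})$ in the monomial basis and read every claimed property off the eigenvalues $r^{n+1}$. The one difference is how the eigenvalue relation is obtained. The paper sets $a=0$ in Lemma~\ref{lem_translation_basis}. You instead compute $T_{B_{0,r}}f(z)=rf(rz)$ directly and use $\J(B_{0,r})=B_{0,r}$, together with the reality of the eigenvalues, to identify $T_{B_{0,r}}^*$ with $T_{B_{0,r}}$. You also write out the strong-continuity and Hilbert--Schmidt details that the paper leaves implicit; note that your sum $\sum_n r^{2(n+1)}$ is the squared Hilbert--Schmidt norm.
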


Denote by $\End(\Sym A^2(\bD))$ the space of all linear endomorphisms of $\Sym A^2(\bD)$.
We extend $\rho_\cl:\CEc(1)\rightarrow \bB(A^2(\bD))$ to a monoid homomorphism
\begin{align*}
\rho_\cl:\CEc(1)\rightarrow \End(\Sym A^2(\bD))
\end{align*}
as follows.
 For $p\geq 1$ and $v_1\otimes \cdots \otimes v_p \in A^2(\bD)^{\otimes p}$, set
\begin{align*}
\rho_\cl(\phi)(v_1\otimes \cdots \otimes v_p)
=(\rho_\cl(\phi)v_1)\otimes \cdots \otimes (\rho_\cl(\phi)v_p),
\end{align*}
which is uniquely extend to a bounded linear map on $\Sym^p A^2(\bD)$.
For $p=0$ define $\rho_\cl(\phi)\va=\va$.

%The monoid homomorphism $\rho_\cl:\CEc(1) \rightarrow \bB(A^2(\bD))$ を monoid 準同型
%\begin{align*}
%\rho_\cl: \CEc(1) \rightarrow \End(\Sym A^2(\bD))
%\end{align*}
%へ以下のように拡張する。$p \geq 1$の場合は、
%\begin{align*}
%\rho_\cl(v_1\otimes \cdots \otimes v_p) = (\rho_\cl(\phi)v_1)\otimes \cdots \otimes (\rho_\cl(\phi)v_p)
%\end{align*}
%によって定め、$p=0$の場合は$\rho_\cl(\phi)\va=\va$によって定める。ここで$\End(\Sym A^2(\bD))$は$\Sym A^2(\bD)$上の線形写像全体。

In what follows, we introduce a quantum correction to the monoid homomorphism $\rho_\cl$ and construct a monoid homomorphism
\begin{align*}
\rho:\CEh(1) \rightarrow \End(\Sym A^2(\bD)).
\end{align*}

%よってLemma \ref{lem_kernel_derivative}より命題が従う。
%\end{proof}
%
%最後に$r \in (0,1]$に対する、dilation を調べる。
%Lemma \ref{lem_translation_basis}より
%\begin{align*}
%\rho_\cl(B_{0,r})(z^n) =\frac{r^{n+1}}{(n+1)!} \pa_a^n e_a(z) |_{a=0}=r^{n+1} z^n.
%\end{align*}
%よって$\{z^n \in A^2(\bD) \}_{n \geq 0}$
%は dilations に対する、同時固有ベクトルからなる正規直交基底である。Hence, we have:
%\begin{lem}\label{lem_dilation}
%Dilation $\rho_\cl(B_{0,r})$は strongly continuous, self-adjoint, contraction な monoid の準同型
%\begin{align*}
%(0,1] \rightarrow B(A^2(\bD))
%\end{align*}
%を定め、$z^n$は$\rho_\cl(B_{0,r})z^n=r^{n+1}z^n$を満たす同時固有ベクトルである。とくに$\rho_\cl(B_{0,r})$は$r<1$のとき Hilbert-Schdmit operator である。
%\end{lem}
%
%
%
%
%
%以下 monoid 準同型$\rho_\cl$に量子補正を加えて monoid 準同型
%\begin{align*}
%\rho:\CEh(1) \rightarrow \End(\Sym A^2(\bD))
%\end{align*}
%を構成する。
Let $\phi\in \CEh(1)$. Then, by definition, 
\begin{align}
F_\phi(z,w) \in A^2(\bD\times \bD)\cong A^2(\bD)\hotimes A^2(\bD).
\label{eq_F_L2}
\end{align}
Define a bounded linear map $\pa_{\phi}: A^2(\bD\times \bD) \rightarrow \C$ by
\begin{align*}
\pa_{\phi}\left(P(z,w)\right)= (F_{\J\phi},P)_B=\int_{\bD\times \bD} {F_\phi(\z,\w)}P(z,w) d\mu
%\label{def_partial_phi}
\end{align*}
for $P(z,w) \in A^2(\bD \times \bD)$ (see Proposition \ref{prop_sub_CEh}). 
%Define $\{d_{n,m}\}_{n,m>0}$ by
%\begin{align}
%F_\phi(z,w) &= \sum_{n,m \geq 0}\sqrt{(n+1)(m+1)}d_{n,m}z^n w^m\\
%&= \sum_{n,m \geq 0}d_{n,m}e_n(z) e_m(w),
%\label{eq_expansion_F}
%\end{align}
%where $\{e_n(z)e_m(w)\}_{n,m \geq 0}$ is an orthonormal basis of
%$A^2(\bD \times \bD)$.
%By Lemma \ref{lem_tensor_isomorphism}, this can equivalently be regarded as a
%bounded operator $A^2(\bD) \hotimes A^2(\bD) \rightarrow \C$.

%where $\{e_n(z)e_m(w)\}_{n,m \geq 0}$ は $A^2(\bD \times \bD)$の正規直交基底。
%補題\ref{lem_tensor_isomorphism}より これは有界作用素 、とも思える:
\begin{prop}\label{lem_partial_phi_dense}
For any $\phi \in \CEh(1)$, the bilinear map $\pa_{\phi}: A^2(\bD)\hotimes  A^2(\bD) \rightarrow \C$ satisfies
\begin{align*}
|\pa_{\phi}(P)|\leq \norm{F_{\J\phi}}_B \norm{P}_B
\end{align*}
for any $P \in A^2(\bD) \hotimes A^2(\bD)$. Moreover, for any $a,b\in \bD$,
\begin{align*}
\pa_{\phi}(E_a \otimes E_b) = {F_\phi(a,b)}.
\end{align*}
\end{prop}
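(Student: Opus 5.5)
The plan is to treat $\pa_\phi$ as the inner product against a fixed vector of $A^2(\bD\times\bD)\cong A^2(\bD)\hotimes A^2(\bD)$ (Lemma~\ref{lem_tensor_isomorphism}). The bound then follows from Cauchy--Schwarz, and the evaluation formula from the two-variable reproducing property of $E_a\otimes E_b$.

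First I will check that $F_{\J\phi}\in A^2(\bD\times\bD)$ and identify its conjugate. From the definition $\J(\phi)(z)=\overline{\phi(\z)}$ we get $\J(\phi)'(z)=\overline{\phi'(\z)}$. Substituting into the formula for $F$ gives
\begin{align*}
F_{\J\phi}(z,w)=\overline{F_\phi(\z,\w)}.
\end{align*}
Hence $\overline{F_{\J\phi}(z,w)}=F_\phi(\z,\w)$, which is consistent with the integral formula in the definition of $\pa_\phi$. Since $(z,w)\mapsto(\z,\w)$ preserves $d\mu$, we also get $\norm{F_{\J\phi}}_B=\norm{F_\phi}_{L^2}<\infty$, i.e.\ $\J\phi\in\CEh(1)$ (Proposition~\ref{prop_sub_CEh}). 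So $\pa_\phi(P)=(F_{\J\phi},P)_B$ is a continuous linear functional, and Cauchy--Schwarz in the Hilbert space $A^2(\bD\times\bD)$ gives $|\pa_\phi(P)|\le\norm{F_{\J\phi}}_B\norm{P}_B$.

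For the second identity, I will first establish the two-variable reproducing property: for every $Q\in A^2(\bD\times\bD)$,
\begin{align*}
(E_{\ba}\otimes E_{\bar b},\,Q)_B=Q(a,b).
\end{align*}
On elementary tensors $Q=f\otimes g$ this is \eqref{eq_evaluation} applied in each variable. Both sides are continuous linear functionals in $Q$: the left by Cauchy--Schwarz, the right because point evaluation is bounded on a Bergman space. Since elementary tensors span a dense subspace, the identity extends to all $Q$. Alternatively, I can argue directly with the orthonormal basis $e_n(z)e_m(w)$ and expand $Q$ in it.

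Then I compute, using the Hermitian symmetry of the inner product and the identity $\overline{F_{\J\phi}(\ba,\bar b)}=F_\phi(a,b)$ from the first step:
\begin{align*}
\pa_\phi(E_a\otimes E_b)=(F_{\J\phi},E_a\otimes E_b)_B=\overline{(E_a\otimes E_b,F_{\J\phi})_B}=\overline{F_{\J\phi}(\ba,\bar b)}=F_\phi(a,b).
\end{align*}
The only delicate points are the conjugation bookkeeping and the density/continuity extension of the reproducing identity to the completed tensor product. Both are routine, so I expect no real obstacle.
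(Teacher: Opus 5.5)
Your proposal is correct and follows the same route as the paper. The bound is Cauchy--Schwarz for the pairing $(F_{\J\phi},P)_B$, and the evaluation formula is the chain $\pa_\phi(E_a\otimes E_b)=\overline{(E_a\otimes E_b,F_{\J\phi})_B}=\overline{F_{\J\phi}(\ba,\bar b)}=F_\phi(a,b)$. Your checks that $F_{\J\phi}(z,w)=\overline{F_\phi(\z,\w)}$, and that the two-variable reproducing identity extends by density to the completed tensor product, supply details the paper leaves implicit.
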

\begin{proof}
Since $\pa_{\phi}(E_a,E_b)=(F_{\J\phi}(z,w),E_a(z)E_b(w))=\overline{(E_a(z)E_b(w),F_{\J\phi}(z,w))}=\overline{F_{\J\phi}(\ba,\bar{b})}=F_{\phi}(a,b)$.
%the sum $F_a(z) = \sum_{n} \overline{e_n(a)} e_n(z)$ converges in $L^2$,
%and $\pa_\phi$ is bounded, it follows that
%%Since the sum $F_a(z) =\sum_{n} \overline{e_n(a)}e_n(z)$は$L^2$収束するので、$\pa_\phi$が有界であることから
%\begin{align*}
%\pa_{\phi}(F_a,F_b) &= \sum_{n,m\geq 0}\overline{e_n(a)} \overline{e_m(b)}\overline{d_{n+1,m+1}}
%% &= \sum_{n,m\geq 0}
%%\sqrt{(n+1)(m+1)}\bar{a}^n \bar{b}^m\overline{d_{n+1,m+1}}
%=\overline{F_{\phi}(a,b)}.
%\end{align*}
\end{proof}

Let $\phi \in \CEh(1)$. Define $\pa_\phi: (A^2(\bD))^{\hotimes n} \rightarrow ( A^2(\bD))^{\hotimes n-2}$ by
\begin{align}
\pa_\phi(a_1 \hotimes \cdots \hotimes a_n)=
\sum_{1 \leq i<j \leq n} \pa_\phi(a_i \otimes a_j) \left(a_1 \hotimes \cdots   \hat{a}_i \cdots \hat{a}_j \cdots \hotimes a_n\right)
\label{eq_pa_phi_cont}
\end{align}
for $n \geq 2$ and $\pa_\phi=0$ if $n=0,1$.
Here $a_1\hotimes\cdots \hat{a}_i \cdots \hat{a}_j \cdots \hotimes a_n$ denotes the vector in $A^2(\bD)^{\otimes (n-2)}$ obtained by omitting $a_i(z)$ and $a_j(z)$.
Since $\pa_\phi$ is an operator of degree $-2$ and is locally nilpotent,
the exponential
\begin{align}
\exp(\pa_\phi): \bigoplus_{p \geq 0} A^2(\bD)^{\hotimes p} \rightarrow \bigoplus_{p \geq 0} A^2(\bD)^{\hotimes p}
\label{eq_exp_tensor}
\end{align}
is a well-defined linear map.
\begin{lem}
\label{lem_cont_sym}
For any $p \geq 2$ and $a_1,\dots,a_p \in A^2(\bD)$,
\begin{align*}
\pa_\phi (\bs^p(a_1\otimes \cdots \otimes a_p))= \bs^{p-2}(\pa_\phi(a_1\otimes \cdots \otimes a_p)).
\end{align*}
\end{lem}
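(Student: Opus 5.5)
We need to show that $\pa_\phi$ commutes with symmetrization: $\pa_\phi\circ\bs^p=\bs^{p-2}\circ\pa_\phi$ on pure tensors. The plan is a direct combinatorial computation using only two ingredients. The first is the definition \eqref{eq_pa_phi_cont}. The second is that the bilinear form $\pa_\phi(a\otimes b)$ is symmetric, $\pa_\phi(a\otimes b)=\pa_\phi(b\otimes a)$. This symmetry follows from the symmetry $F_\phi(z,w)=F_\phi(w,z)$, which is evident from the defining formula. Equivalently, via Proposition~\ref{lem_partial_phi_dense} it reduces to $F_\phi(a,b)=F_\phi(b,a)$ on the dense span of $\{E_a\otimes E_b\}$ (Proposition~\ref{prop_dense}), extended by boundedness.

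\textbf{Step 1 (reformulation).} By symmetry of the pairing, rewrite $\pa_\phi$ on $a_1\otimes\cdots\otimes a_p$ as
\[
\pa_\phi(a_1\otimes\cdots\otimes a_p)=\tfrac12\sum_{i\neq j}\pa_\phi(a_i\otimes a_j)\,(a_1\otimes\cdots\hat a_i\cdots\hat a_j\cdots\otimes a_p),
\]
a sum over ordered pairs of distinct positions.

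\textbf{Step 2 (left-hand side).} Apply this to $\bs^p(a_1\otimes\cdots\otimes a_p)=\frac1{p!}\sum_{\sigma\in S_p}a_{\sigma(1)}\otimes\cdots\otimes a_{\sigma(p)}$. For fixed $\sigma$ and ordered positions $(i,j)$, the contracted labels are $(k,l)=(\sigma(i),\sigma(j))$. The remaining tensor is the sequence of the other labels in the order induced by $\sigma$. So $\pa_\phi\bs^p(\cdots)$ is $\frac{1}{2\,p!}\sum_{(\sigma,i,j)}\pa_\phi(a_k\otimes a_l)\,(\text{ordered remaining tensor})$.

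Now reindex the data $(\sigma,i,j)$ by the ordered label pair $(k,l)$, the ordered positions $(i,j)$, and a bijection $\tau$ from the remaining $p-2$ positions (in increasing order) to the remaining labels. This is a bijection. For fixed $(k,l)$, each $\tau\in S_{p-2}$ (after identifying the remaining labels with $\{1,\dots,p-2\}$ in increasing order) appears exactly $p(p-1)$ times, once for each choice of $(i,j)$. Hence
\[
\pa_\phi\bs^p(\cdots)=\frac{p(p-1)}{2\,p!}\sum_{k\neq l}\pa_\phi(a_k\otimes a_l)\sum_{\tau\in S_{p-2}}(\text{permuted remaining}) .
\]

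\textbf{Step 3 (right-hand side).} Similarly,
\[
\bs^{p-2}\pa_\phi(a_1\otimes\cdots\otimes a_p)=\tfrac12\sum_{k\neq l}\pa_\phi(a_k\otimes a_l)\,\frac1{(p-2)!}\sum_{\tau}(\cdots).
\]
Since $\frac{p(p-1)}{p!}=\frac1{(p-2)!}$, the two sides agree.

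The only real obstacle is the bookkeeping of the reindexing bijection in Step 2. Analytic issues are absent: both sides are finite sums in the algebraic tensor product, and each $\pa_\phi(a_i\otimes a_j)$ is a scalar. The one point to state carefully is the symmetry of the pairing, which is needed because the omitted pair may appear in either order after permutation.
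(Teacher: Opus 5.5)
Your proof is correct and follows the same route as the paper: you use the symmetry $\pa_\phi(a\otimes b)=\pa_\phi(b\otimes a)$, which the paper derives from $F_{\J(\phi)}(z,w)=F_{\J(\phi)}(w,z)$, and then expand $\bs^p$ directly and reindex. Your explicit bijection $(\sigma,i,j)\leftrightarrow((k,l),(i,j),\tau)$ and the count $p(p-1)/p!=1/(p-2)!$ spell out the bookkeeping that the paper's one-line computation leaves implicit.
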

\begin{proof}
Since $F_{\J(\phi)}(z,w) = F_{\J(\phi)}(w,z)$, $\pa_\phi(a \otimes b)=\pa_\phi(b \otimes a)$ for any $a,b \in A^2(\bD)$. Hence,
\begin{align*}
\pa_\phi (\bs^p(a_1\otimes \cdots \otimes a_p))
&=\frac{1}{p!} \sum_{\si \in S_p} \pa_\phi (a_{\si(1)} \otimes \cdots \otimes a_{\si(p)}))\\
&=\frac{1}{p!} \sum_{\si \in S_p}\sum_{i<j} \pa_\phi(a_{\si(i)},a_{\si(j)}) \left(a_{\si(1)} \otimes \cdots   \hat{a}_{\si(i)} \cdots \hat{a}_{\si(j)} \cdots \otimes a_p\right)\\
&=\bs^{p-2}(\pa_\phi(a_1\otimes \cdots \otimes a_p)).
\end{align*}
\end{proof}
Hence, the restriction of \eqref{eq_exp_tensor} on $\Sym A^2(\bD) \subset \bigoplus_{p \geq 0} A^2(\bD)^{\hotimes p}$
defines a linear map $\exp(\pa_\phi): \Sym A^2(\bD) \rightarrow \Sym A^2(\bD)$.
%Since $\pa_\phi$は次数$-2$の作用素であり、locally nilpotent であるから、
%$\exp(\pa_\phi):\Sym A^2(\bD) \rightarrow \Sym A^2(\bD)$が定義できる。
Set
\begin{align*}
\rho(\phi) = \rho_\cl(\phi)\circ \exp(\pa_\phi):\Sym A^2(\bD) \rightarrow \Sym A^2(\bD).
\end{align*}

\begin{thm}\label{thm_semigroup_quantum}
For any $\phi,\psi \in \CEh(1)$, $\rho(\phi \circ \psi) = \rho(\phi)\circ \rho(\psi)$ and $\rho(1_\bD)=\id$, that is, $\rho$ is a representation of the monoid $\CEh$ on $\Sym A^2(\bD)$. 
Moreover, for any $k \geq 0$, $\rho$ preserves $\Hf^k=\bigoplus_{p=0}^k \Sym^pA^2(\bD) \subset \Sym A^2(\bD)$ and for any $\phi \in \CEh(1)$,
\begin{align*}
\rho(\phi)|_{\Hf^k}: \Hf^k \rightarrow \Hf^k
\end{align*}
is a bounded linear operator.
Furthermore, the restriction $\rho|_{\mathrm{PSU}(1,1)}: \mathrm{PSU}(1,1) \rightarrow \mathbb{B}(\Hf^k)$ is a strongly continuous unitary representation.
%Moreover, for any $k \geq 0$, $\rho$は$\Hf^k \subset \Sym A^2(\bD)$を$\Hf^k$に送り、for any $\phi \in \CEh(1)$,
%\begin{align*}
%\rho(\phi)|_{\Hf^k}: \Hf^k \rightarrow \Hf^k
%\end{align*}
%は有界線形作用素である。さらに$\rho|_{\mathrm{PSU}(1,1)}: {\mathrm{PSU}(1,1)} \rightarrow B(H^k)$ は強連続な unitary 表現である。
\end{thm}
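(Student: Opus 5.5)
The plan is to verify the homomorphism property on the dense family of vectors built from reproducing kernels, then extend by boundedness. On $\Sym^p A^2(\bD)$ consider the vectors $\bs^p(E_{a_1}\otimes\cdots\otimes E_{a_p})$ with $a_i\in\bD$; by Proposition~\ref{prop_dense} they span a dense subspace of $\Sym^p A^2(\bD)$. Everything can be computed explicitly on these vectors. By Proposition~\ref{prop_monoid_classical}, $\rho_\cl(\phi)E_a=\phi'(a)E_{\phi(a)}$. By Proposition~\ref{lem_partial_phi_dense}, $\pa_\phi(E_a\otimes E_b)=F_\phi(a,b)$. Hence
\begin{align*}
\rho(\phi)\,\bs^p(E_{a_1}\otimes\cdots\otimes E_{a_p})=\sum_{\pi}\prod_{\{i,j\}\in\pi}F_\phi(a_i,a_j)\,\bs\Big(\bigotimes_{k\notin\pi}\phi'(a_k)E_{\phi(a_k)}\Big),
\end{align*}
where $\pi$ runs over partial pairings of $\{1,\dots,p\}$. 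This is the Wick expansion of $\exp(\pa_\phi)$; the combinatorics of $\exp$ of a degree $-2$ derivation-like operator give exactly a sum over partial matchings.

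For the composition law, I would apply $\rho(\phi)\rho(\psi)$ to the same vector. First $\rho(\psi)$ pairs some indices with weights $F_\psi(a_i,a_j)$ and sends the rest to $\psi'(a_k)E_{\psi(a_k)}$. Then $\rho(\phi)$ pairs some of the remaining indices with weights $F_\phi(\psi(a_i),\psi(a_j))\psi'(a_i)\psi'(a_j)$ and sends the rest to $\phi'(\psi(a_k))\psi'(a_k)E_{\phi\psi(a_k)}=(\phi\circ\psi)'(a_k)E_{\phi\psi(a_k)}$. A partial matching for $\phi\circ\psi$ corresponds to the union of the two stages' matchings, so each pair receives the total weight $F_\phi(\psi(a_i),\psi(a_j))\psi'(a_i)\psi'(a_j)+F_\psi(a_i,a_j)$. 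By the cocycle identity \eqref{eq_F_cocycle} this total equals $F_{\phi\circ\psi}(a_i,a_j)$. Concretely, the identity to check is $\exp(\pa_\psi)\circ(\text{conjugated }\pa_\phi)=\exp(\pa_{\phi\psi})$ after moving $\rho_\cl(\psi)$ past $\exp(\pa_\phi)$. This uses the relation $\pa_\phi\circ\rho_\cl(\psi)^{\otimes 2}=(\text{pairing with }F_\phi(\psi\cdot,\psi\cdot)\psi'\psi')$, which holds on kernels; the two pairings then commute and add in the exponent. The case $\rho(1_\bD)=\id$ follows because $F_{\id}=0$ by Proposition~\ref{prop_cocycle_vanish}.

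Boundedness and preservation of $\Hf^k$ follow because $\rho_\cl(\phi)$ has norm $\le 1$ on each $\Sym^p$ (as a tensor power of a contraction), and $\pa_\phi$ maps $\Sym^p$ into $\Sym^{p-2}$ (Lemma~\ref{lem_cont_sym}) with norm at most $\binom p2\norm{F_{\J\phi}}_B$ by Proposition~\ref{lem_partial_phi_dense}. On $\Hf^k$ the exponential is a finite sum, so it is bounded. Since both sides of the composition identity are bounded on $\Hf^k$ and agree on a dense set, they are equal. For $\phi\in\mathrm{PSU}(1,1)$ we have $F_\phi=0$, so $\rho(\phi)=\rho_\cl(\phi)^{\otimes p}$ on each summand. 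Unitarity and strong continuity then follow from Proposition~\ref{prop_monoid_classical}, since tensor powers of a strongly continuous unitary representation are again strongly continuous and unitary; strong continuity only needs to be checked on product vectors, using uniform boundedness.

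The main obstacle is making the interchange of $\rho_\cl(\psi)$ and $\pa_\phi$ rigorous on all of $A^2(\bD)^{\hotimes 2}$, not just on kernel vectors. I would handle it by noting that both $P\mapsto\pa_\phi(\rho_\cl(\psi)^{\otimes2}P)$ and the pairing against the function $\overline{F_\phi(\psi(\bar z),\psi(\bar w))\psi'(\bar z)\psi'(\bar w)}$ are bounded functionals. Boundedness of the latter holds by the computation \eqref{eq_cocycle_L2}. The two functionals agree on $E_a\otimes E_b$ and are therefore equal. The remaining combinatorial bookkeeping of matchings is routine.
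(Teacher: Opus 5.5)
Your proof is correct and is essentially the paper's argument. The paper packages your matching combinatorics into the operator identities $e^{\pa_\phi}\rho_\cl(\psi)=\rho_\cl(\psi)\,e^{\pa_\phi\circ(\rho_\cl(\psi)\hotimes\rho_\cl(\psi))}$ and $[\pa_\phi\circ(\rho_\cl(\psi)\hotimes\rho_\cl(\psi)),\pa_\psi]=0$, and it checks the cocycle identity on $E_a\otimes E_b$ and extends by boundedness (Lemma~\ref{lem_partial_cocycle}), exactly as in your last paragraph.

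In your own kernel-vector route that step is not even needed: agreement on the dense span of the $\bs^p(E_{a_1}\otimes\cdots\otimes E_{a_p})$, together with boundedness on each $\Sym^p A^2(\bD)$, already gives the identity. You should, however, cite $\phi\circ\psi\in\CEh(1)$ (Proposition~\ref{prop_monoid_HS}), since that is what makes $\rho(\phi\circ\psi)$ defined and bounded.
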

\begin{lem}\label{lem_partial_cocycle}
Let $\phi,\psi \in \CEh(1)$. Then,
\begin{align*}
\pa_{\psi}+\pa_{\phi} \circ (\rho_\cl(\psi)\hotimes\rho_\cl(\psi)) = \pa_{\phi \circ \psi}
\end{align*}
as linear maps $A^2(\bD)\hotimes A^2(\bD) \rightarrow \C$.
\end{lem}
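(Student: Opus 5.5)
Both sides of the identity are bounded linear functionals on $A^2(\bD)\hotimes A^2(\bD)$. It therefore suffices to check the identity on the vectors $E_a\otimes E_b$, $a,b\in\bD$. Their linear span is dense in $A^2(\bD)\hotimes A^2(\bD)$, since by Proposition~\ref{prop_dense} the span of $\{E_a\}$ is dense in $A^2(\bD)$ and algebraic tensors of dense subspaces are dense in the Hilbert tensor product.

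Boundedness of each term:
\begin{itemize}
\item $\pa_\psi$ and $\pa_{\phi\circ\psi}$ are bounded by Proposition~\ref{lem_partial_phi_dense}; here $\phi\circ\psi\in\CEh(1)$ by Proposition~\ref{prop_monoid_HS}.
\item $\rho_\cl(\psi)=T^*_{\J(\psi)}$ is bounded with operator norm at most $1$ by \eqref{eq_contraction_D}. Hence $\rho_\cl(\psi)\hotimes\rho_\cl(\psi)$ is bounded on the Hilbert tensor product, so $\pa_\phi\circ(\rho_\cl(\psi)\hotimes\rho_\cl(\psi))$ is a bounded functional as well.
\end{itemize}

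Now evaluate each side on $E_a\otimes E_b$.
\begin{itemize}
\item By Proposition~\ref{lem_partial_phi_dense}, $\pa_\psi(E_a\otimes E_b)=F_\psi(a,b)$ and $\pa_{\phi\circ\psi}(E_a\otimes E_b)=F_{\phi\circ\psi}(a,b)$.
\item By Proposition~\ref{prop_monoid_classical}, $\rho_\cl(\psi)E_a=\psi'(a)E_{\psi(a)}$. Since $\psi(a),\psi(b)\in\bD$, Proposition~\ref{lem_partial_phi_dense} again gives
\[
\pa_\phi\big(\rho_\cl(\psi)E_a\otimes\rho_\cl(\psi)E_b\big)=\psi'(a)\psi'(b)\,F_\phi(\psi(a),\psi(b)).
\]
\end{itemize}
The required identity on $E_a\otimes E_b$ is therefore
\[
F_\psi(a,b)+F_\phi(\psi(a),\psi(b))\,\psi'(a)\psi'(b)=F_{\phi\circ\psi}(a,b).
\]
This is exactly the cocycle identity \eqref{eq_F_cocycle} of Proposition~\ref{prop_FG_equality}, with $f_1=\phi$ and $f_2=\psi$.

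The argument is essentially bookkeeping. The one point to check carefully is the conjugation convention: $\pa_\phi$ is defined by pairing against $F_{\J\phi}$, and $\rho_\cl$ involves $T^*_{\J(\psi)}$. One must confirm these combine so that evaluation happens at $a$ and $\psi(a)$ rather than at $\ba$ or $\overline{\psi(\ba)}$. Propositions~\ref{lem_partial_phi_dense} and~\ref{prop_monoid_classical} are already stated in the unconjugated form, so this is handled. The density and extension-by-continuity step is standard once both functionals are known to be bounded.
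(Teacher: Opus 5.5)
Your proof is correct and follows the same route as the paper. You reduce by boundedness and density to the vectors $E_a\otimes E_b$, evaluate both sides using Proposition~\ref{lem_partial_phi_dense} together with $\rho_\cl(\psi)E_a=\psi'(a)E_{\psi(a)}$, and conclude from the cocycle identity \eqref{eq_F_cocycle}; you are a bit more explicit than the paper about why each term is bounded and why the $E_a\otimes E_b$ span a dense subspace.
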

\begin{proof}
Since the both sides are bounded linear operators, it suffices to show the identity on the subset $\{E_a \}_{a\in \bD} \subset A^2(\bD)$. By Lemma \ref{lem_partial_phi_dense}, for any $a,b\in \bD$
\begin{align*}
\left(\pa_{\psi}+\pa_{\phi} \circ (\rho_\cl(\psi)\hotimes\rho_\cl(\psi)) \right)(E_a,E_b)&={F_{\psi}(a,b)}+ {\psi'(a)}{\psi'(b)} {F_{\phi}(\psi(a),\psi(b))}.
\end{align*}
Hence, the assertion follows from Proposition \ref{prop_FG_equality}.
%\begin{align*}
%&F_{\psi}(a,b)+ \psi'(a){\psi'(b)}{F_{\phi}(\psi(a),\psi(b))}\\
%&=
%\left(\frac{\psi'(a)\psi'(b)}{(\psi(a)-\psi(b))^2}-\frac{1}{(a-b)^2}\right)
%+\psi'(a)\psi'(b)
%\left(\frac{\phi'(\psi(a))\phi'(\psi(b))}{(\phi(\psi(a))-\phi(\psi(b)))^2}-\frac{1}{(\psi(a)-\psi(b))^2}\right)\\
%&= F_{\phi \circ \psi}(a,b).
%\end{align*}
\end{proof}

\begin{proof}[proof of Theorem \ref{thm_semigroup_quantum})]
Since
\begin{align*}
\exp(\pa_\phi)\rho_\cl(\psi) =\rho_\cl(\psi) \exp({\pa_\phi \circ (\rho_\cl(\psi)\hotimes \rho_\cl(\psi))})
\end{align*}
as linear maps on $\mathrm{Sym}A^2(\bD)$, by Lemma \ref{lem_partial_cocycle}, we have
\begin{align*}
&\rho(\phi)\rho(\psi)=
\rho_\cl(\phi)e^{\pa_\phi} \rho_\cl(\psi) e^{\pa_\psi} 
%\rho_\cl(\phi)\rho_\cl(\psi)\rho_\cl(\psi)^{-1} e^{\pa_\phi} \rho_\cl(\psi) e^{\pa_\psi}\\
 =\rho_\cl(\phi)\rho_\cl(\psi) e^{\pa_\phi \circ (\rho_\cl(\psi)\hotimes \rho_\cl(\psi))}e^{\pa_\psi}\\
& =\rho_\cl(\phi\circ \psi) e^{\pa_\phi \circ (\rho_\cl(\psi)\hotimes \rho_\cl(\psi))+\pa_\psi}
=\rho_\cl(\phi\circ \psi) e^{\pa_{\phi \circ \psi}}=\rho(\phi \circ \psi).
\end{align*}
Here, we use $[\pa_\phi \circ (\rho_\cl(\psi)\hotimes \rho_\cl(\psi)),\pa_\psi]=0$ as linear maps on $\Sym A^2(\bD)$.
Strong continuity and unitarity follow immediately from Proposition~\ref{prop_cocycle_vanish} and Proposition~\ref{prop_monoid_classical}.
\end{proof}

\subsection{Construction of geometric wick contraction}\label{sec_wick}
Let $(\phi,\psi) \in \CEh(2)$. Then,
%Then, that is, there is an open subset $U \subset \C$ such that
%$\overline{\bD} \subset U$ and $\phi,\psi:U \rightarrow \C$ are injective holomorphic maps
%and  
%\begin{align}
%\phi(\bD), \psi(\bD) \subset \bD \text{ and }
%\overline{\phi(\bD)} \cap \overline{\psi(\bD)} =\emptyset.
%\label{eq_geometry}
%\end{align}
%Set
%\begin{align*}
%h_{\phi,\psi}(z,w) = \frac{\phi'(z)\psi'(w)}{(\phi(z)-\psi(w))^2}
%\end{align*}
%which is a holomorphic map on $\bD \times \bD$ 
%and square-integrable by \eqref{eq_geometry}. Hence,
\begin{align*}
G_{\phi,\psi}(z,w) \in A^2(\bD \times \bD).
\end{align*}
Define a bounded linear map $C_{\phi,\psi}: A^2(\bD\times \bD) \rightarrow \C$ by
\begin{align*}
C_{\phi,\psi}\left(P(z,w)\right)= (G_{\J\phi,\J\psi},P)_B=\int_{\bD\times \bD} {G_{\phi,\psi}(\z,\w)}P(z,w) d\mu
%\label{def_partial_phi}
\end{align*}
for $P(z,w) \in A^2(\bD \times \bD)$. 
Define $\{g_{n,m}\}_{n,m\geq 0}$ by
\begin{align}
G_{\phi,\psi}(z,w) =
 \sum_{n,m \geq 0}g_{n,m}z^n w^m.\label{eq_expansion}
\end{align}

Then, we have:
\begin{prop}\label{prop_G_dense}
Let $(\phi,\psi) \in \CEh(2)$. Then, there is a unique bounded linear map 
$C_{\phi,\psi}:A^2(\bD) \hotimes A^2(\bD) \rightarrow \C$
such that 
\begin{align*}
C_{\phi,\psi}(E_a \otimes E_b) = {G_{\phi,\psi}(a,b)}.
\end{align*}
for any $a,b \in \bD$. Moreover, 
$C_{\phi,\psi}$ satisfies
$C_{\phi,\psi}(\sqrt{n+1}e_n(z)\otimes \sqrt{m+1}e_m(w)) = {g_{n,m}}$ for any $n,m \geq 0$.
\end{prop}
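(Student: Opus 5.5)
Existence comes from the definition $C_{\phi,\psi}(P)=(G_{\J\phi,\J\psi},P)_B$. Here $G_{\J\phi,\J\psi}(z,w)=\overline{G_{\phi,\psi}(\z,\w)}$, because $\J$ acts by conjugating both the argument and the value, and $(\J\phi)'(z)=\overline{\phi'(\z)}$. This shows $G_{\J\phi,\J\psi}\in A^2(\bD\times\bD)$ whenever $(\phi,\psi)\in\CEh(2)$; this is the same observation that makes $\CEh$ stable under $\J$ in Proposition~\ref{prop_sub_CEh}. By Cauchy--Schwarz, $C_{\phi,\psi}$ is therefore bounded with $\norm{C_{\phi,\psi}}\le\norm{G_{\phi,\psi}}_{L^2}$. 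Through Lemma~\ref{lem_tensor_isomorphism} we regard it as a functional on $A^2(\bD)\hotimes A^2(\bD)$, where $E_a\otimes E_b$ corresponds to $E_a(z)E_b(w)$.

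Next we evaluate on kernels, exactly as in Proposition~\ref{lem_partial_phi_dense}. The function $E_a(z)E_b(w)=E_{(a,b)}$ is the reproducing kernel of $A^2(\bD^2)$: applying \eqref{eq_evaluation} in each variable gives $(E_a(z)E_b(w),P)_B=P(\ba,\bar b)$. Hence
\[
C_{\phi,\psi}(E_a\otimes E_b)=\overline{(E_aE_b,G_{\J\phi,\J\psi})_B}=\overline{G_{\J\phi,\J\psi}(\ba,\bar b)}=G_{\phi,\psi}(a,b).
\]
Uniqueness follows because, by Proposition~\ref{prop_dense}, the span of $\{E_a\}$ is dense in $A^2(\bD)$. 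Consequently the span of $\{E_a\otimes E_b\}$ is dense in the Hilbert tensor product, and two bounded functionals that agree there are equal.

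For the coefficient formula, note that $\sqrt{n+1}e_n=(n+1)z^n=\frac{1}{n!}\pa_a^nE_a|_{a=0}$, using the expansion in Lemma~\ref{lem_kernel_derivative}. The most direct route is to compute the inner product in the orthonormal basis. Writing $G_{\J\phi,\J\psi}(z,w)=\sum \bar g_{n,m}z^nw^m$, we get
\[
(G_{\J\phi,\J\psi},(n+1)z^n(m+1)w^m)_B=g_{n,m}(n+1)(m+1)\,\norm{z^n}^2\norm{w^m}^2=g_{n,m},
\]
since $(z^k,z^n)_B=\delta_{kn}/(n+1)$. Alternatively, one can differentiate $G_{\phi,\psi}(a,b)=C_{\phi,\psi}(E_a\otimes E_b)$ in $a$ and $b$ at $0$. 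The only point needing care there is passing the derivative through $C_{\phi,\psi}$, which is justified because $a\mapsto E_a$ is holomorphic as an $A^2$-valued map and $C_{\phi,\psi}$ is bounded. The basis computation avoids this issue, so it is the one to use; no step is a real obstacle.
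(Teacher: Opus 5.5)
Your proof is correct and follows the route the paper intends. The paper gives no separate proof of this proposition, but its argument for Proposition~\ref{lem_partial_phi_dense} is exactly yours: define the functional as the inner product with $G_{\J\phi,\J\psi}(z,w)=\overline{G_{\phi,\psi}(\z,\w)}$, evaluate it on the reproducing kernels $E_a(z)E_b(w)$, and obtain uniqueness from the density of their span; your orthonormal-basis computation of $g_{n,m}$ correctly gives the second claim.
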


\begin{prop}\label{prop_contraction_covariance}
Let $(\phi_1,\phi_2)\in \CEh(2)$. Then, the following properties hold as  bounded linear maps $A^2(\bD)\hotimes A^2(\bD) \rightarrow \C$:
\begin{enumerate}
\item
For any $\psi_1,\psi_2 \in \CEh(1)$,
\begin{align*}
C_{\phi_1 \circ \psi_1,\phi_2 \circ \psi_2} = C_{\phi_1, \phi_2} \circ (\rho_\cl(\psi_1) \otimes \rho_\cl(\psi_2)).
\end{align*}
\item
For any $h \in \CEh(1)$,
\begin{align*}
C_{h \circ \phi_1, h \circ \phi_2} = C_{\phi_1,\phi_2} + \pa_{h}\circ (\rho_\cl(\phi_1) \otimes \rho_\cl(\phi_2)).
\end{align*}
\end{enumerate}
\end{prop}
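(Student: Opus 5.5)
Both sides of each identity are bounded linear functionals on $A^2(\bD)\hotimes A^2(\bD)$, so the plan is to reduce to the kernel vectors and then invoke the cocycle identities. The left sides are bounded because the relevant tuples lie in $\CEh(2)$. For (1), the tuple $(\phi_1\circ\psi_1,\phi_2\circ\psi_2)=(\phi_1,\phi_2)\circ_1\psi_1\circ_2\psi_2$ lies in $\CEh(2)$ by Proposition~\ref{prop_CE_HS}; similarly for (2), since $(h\circ\phi_1,h\circ\phi_2)=h\circ_1(\phi_1,\phi_2)$. The right sides are bounded because $C_{\phi_1,\phi_2}$, $\pa_h$ (Proposition~\ref{lem_partial_phi_dense}) and $\rho_\cl(\psi_i)$ (Proposition~\ref{prop_monoid_classical}) are bounded, and the Hilbert tensor product of two bounded operators is bounded on $A^2(\bD)\hotimes A^2(\bD)$.

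Next, by Proposition~\ref{prop_dense} the span of $\{E_a\}_{a\in\bD}$ is dense in $A^2(\bD)$. Hence the algebraic span of $\{E_a\otimes E_b\}_{a,b\in\bD}$ is dense in $A^2(\bD)\hotimes A^2(\bD)$, and it suffices to check both identities on $E_a\otimes E_b$. This is the same reduction used in Lemma~\ref{lem_partial_cocycle}.

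On these vectors everything is explicit. By Proposition~\ref{prop_monoid_classical}, $\rho_\cl(\psi)E_a=\psi'(a)E_{\psi(a)}$. By Proposition~\ref{prop_G_dense}, $C_{\phi_1,\phi_2}(E_a\otimes E_b)=G_{\phi_1,\phi_2}(a,b)$. By Proposition~\ref{lem_partial_phi_dense}, $\pa_h(E_a\otimes E_b)=F_h(a,b)$. For (1) the right side evaluates to
\begin{align*}
\psi_1'(a)\psi_2'(b)\,G_{\phi_1,\phi_2}(\psi_1(a),\psi_2(b)),
\end{align*}
which equals $G_{\phi_1\circ\psi_1,\phi_2\circ\psi_2}(a,b)$ by \eqref{eq_G_cocycle2}. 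For (2) the right side evaluates to
\begin{align*}
G_{\phi_1,\phi_2}(a,b)+\phi_1'(a)\phi_2'(b)\,F_h(\phi_1(a),\phi_2(b)),
\end{align*}
which equals $G_{h\circ\phi_1,h\circ\phi_2}(a,b)$ by \eqref{eq_G_cocycle1}. In (2), $F_h$ is evaluated at $\phi_1(a)\ne\phi_2(b)$, so both sides are plain evaluations of holomorphic functions and no regularization is needed.

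The only point requiring care is the passage from pointwise evaluation to the functional identity. This needs the precise form of Proposition~\ref{prop_G_dense}, namely that $C_{\phi,\psi}$ is evaluation (not its conjugate) on $E_a\otimes E_b$. We also need that the \emph{algebraic} tensor product $\rho_\cl(\psi_1)\otimes\rho_\cl(\psi_2)$ appearing in the statement is identified with its bounded extension $\rho_\cl(\psi_1)\hotimes\rho_\cl(\psi_2)$. Once these are settled, continuity together with density gives the identities on all of $A^2(\bD)\hotimes A^2(\bD)$.
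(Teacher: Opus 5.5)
Your proposal is correct and follows the paper's proof. Like the paper, you reduce by boundedness and density to the vectors $E_a\otimes E_b$, evaluate using $\rho_\cl(\psi)E_a=\psi'(a)E_{\psi(a)}$ and the evaluation formulas for $C_{\phi_1,\phi_2}$ and $\pa_h$, and conclude from \eqref{eq_G_cocycle2} for (1) and \eqref{eq_G_cocycle1} for (2). The paper leaves implicit two checks you spell out: that the composed tuples lie in $\CEh(2)$ by Proposition~\ref{prop_CE_HS}, so the left-hand sides are defined, and that the span of the $E_a\otimes E_b$ is dense in the Hilbert tensor product.
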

\begin{proof}
It suffices to show the equalities on the subset $\{E_a\}_{a\in \bD} \subset A^2(\bD)$.
(1) follows from Proposition \ref{prop_FG_equality} and 
\begin{align*}
C_{\phi_1,\phi_2}(\rho_\cl(\psi_1)E_a\otimes \rho_\cl(\psi_2) E_b) &= {\psi_1'(a)}{\psi_2'(b)}C_{\phi_1,\phi_2}(E_{\psi_1(a)}\otimes E_{\psi_2(b)})\\
&= 
{\frac{\psi_1'(a) \phi_1'(\psi_1(a)) \psi_2'(b) \phi_2'(\psi_2(b))}{(\phi_1(\psi_1(a))-\phi_2(\psi_2(b)))^2}}=C_{\phi_1\psi_1,\phi_2\psi_2}(E_a \otimes E_b).
\end{align*}
Similarly, (2) follows from
\begin{align*}
\pa_{h}\circ (\rho_\cl(\phi_1) \otimes \rho_\cl(\phi_2))(E_a \otimes E_b) &= {\phi_1'(a)\phi_2'(b)} \pa_h(E_{\phi_1(a)} \otimes E_{\phi_2(b)})\\
&= {\phi_1'(a)\phi_2'(b)}{\left(\frac{h'(\phi_1(a))h'(\phi_2(b))}{(h\circ\phi_1(a)-h\circ\phi_2(b))^2}   -\frac{1}{(\phi_1(a)-\phi_2(b))^2}  \right)}\\
&=C_{h \circ \phi_1, h \circ \phi_2}(E_a \otimes E_b) - C_{\phi_1, \phi_2}(E_a \otimes E_b) 
\end{align*}
\end{proof}

Let $a,b\in\bD$ and $1>r,s>0$ satisfy
\begin{align*}
B_r(a),B_s(b) \subset \bD
\qquad {B_r(a)} \cap {B_s(b)}=\emptyset.
\end{align*}
Then, $(B_{a,r}(z),B_{b,s}(z)) \in \CEc(2)$ (see \eqref{eq_def_B}) and
\begin{align}
G_{(B_{a,r}(z),B_{b,s}(z))}(z,w)= \frac{rs}{(rz+a-sw-b)^2}.\label{eq_h_trans}
\end{align}
\begin{prop}\label{prop_contraction_trans}
$G_{(B_{a,r}(z),B_{b,s}(z))}(z,w) \in A^2(\bD \times \bD)$ if and only if
\begin{align*}
\overline{B_r(a)} \cap \overline{B_s(b)}=\emptyset.
\end{align*}
Moreover, in this case $(B_{a,r}(z),B_{b,s}(z)) \in \CE(2)$, and for any $n,m \geq 0$,
\begin{align*}
C_{(B_{a,r}(z),B_{b,s}(z))}(\sqrt{n+1}e_n(z),\sqrt{m+1}e_m(z)) 
&= \frac{r^{n+1}}{n!} \frac{s^{m+1}}{m!} \pa_z^n\pa_w^m\frac{1}{(z-w)^2}\Bigl|_{z-w = a-b}.
\end{align*}
\end{prop}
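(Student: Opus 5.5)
Set $c=a-b$ and $d=|a-b|$, so that $\overline{B_r(a)}\cap\overline{B_s(b)}=\emptyset$ if and only if $r+s<d$. By \eqref{eq_h_trans}, $G(z,w)=rs\,(rz-sw+c)^{-2}$. The plan is to expand $G$ in the monomial basis and to read off both the $L^2$ criterion and the coefficients $g_{n,m}$ from this expansion.

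\textbf{Expansion and coefficient formula.} For $|rz-sw|<|c|$, which always holds on $\bD\times\bD$ when $r+s<d$, Taylor expansion in $u=rz$ and $v=-sw$ around $c$ gives
\begin{align*}
G(z,w)=rs\sum_{n,m\ge0}\frac{r^n s^m}{n!\,m!}\,\pa_z^n\pa_w^m\frac{1}{(z-w)^2}\Bigl|_{z-w=c}\,z^n w^m ,
\end{align*}
where the sign from $-sw$ is absorbed because $\pa_w$ acting on a function of $z-w$ produces exactly that sign. Explicitly,
\begin{align*}
g_{n,m}=(-1)^n\frac{(n+m+1)!}{n!\,m!}\,\frac{r^{n+1}s^{m+1}}{c^{n+m+2}} .
\end{align*}
Once membership in $A^2(\bD\times\bD)$ is established, Proposition~\ref{prop_G_dense} gives $C(\sqrt{n+1}e_n\otimes\sqrt{m+1}e_m)=g_{n,m}$, which is the displayed formula in the statement.

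\textbf{Norm and the ``if'' direction.} Since $\{e_n(z)e_m(w)\}$ is an orthonormal basis and $z^nw^m=e_ne_m/\sqrt{(n+1)(m+1)}$,
\begin{align*}
\norm{G}^2=\sum_{n,m}\frac{|g_{n,m}|^2}{(n+1)(m+1)}
=\frac{r^2s^2}{d^4}\sum_{N\ge0}(N+1)^2\sum_{n+m=N}\binom{N}{n}^2\frac{1}{(n+1)(m+1)}\,\frac{r^{2n}s^{2m}}{d^{2N}} .
\end{align*}
Bound the inner factor by $1$ and use $\binom{N}{n}^2 r^{2n}s^{2m}\le\bigl(\binom{N}{n}r^ns^m\bigr)\cdot(r+s)^N$. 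Summing over $n$ then gives at most $(N+1)^2\bigl((r+s)/d\bigr)^{2N}$, which is summable when $r+s<d$. Membership in $\CE(2)$ in this case is immediate: affine maps extend to all of $\C$, and the closed disks are disjoint by assumption.

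\textbf{``Only if'' direction.} The pair lies in $\CEc(2)$, so the open disks are disjoint and $r+s\le d$. It therefore suffices to exclude the tangent case $r+s=d$. I would do this with a lower bound. Keep only the terms with $n\approx Nr/(r+s)$. For these, the multinomial estimate gives $\binom{N}{n}r^ns^m/(r+s)^N\gtrsim N^{-1/2}$, and $(n+1)(m+1)\sim N^2$. Each such term is then at least $\sim (N+1)^2\cdot N^{-1}\cdot N^{-2}=N^{-1}$, and these are not summable, so $\norm{G}=\infty$.

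Alternatively, one can argue directly. Near the tangency point $z_0$, $w_0$ (boundary points with $rz_0+a=sw_0+b$), $|G|^2$ behaves like $|rz-sw+c|^{-4}$, and the integral of $|\ell(z,w)|^{-4}$ over a product of two half-disk neighbourhoods diverges, since the integral scales like $\int_0 t^{-4}t^{3}\,dt$. I would try the coefficient argument first, since it needs only Stirling and no geometric estimates. The main obstacle is making the central binomial lower bound rigorous uniformly in $N$; the local central limit estimate for $\binom{N}{n}p^n(1-p)^{N-n}$ with $p=r/(r+s)$ suffices.
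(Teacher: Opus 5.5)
Your proof is correct, and it argues the main part differently from the paper. The paper gives no argument for the $L^2$ criterion: it says it follows by ``exactly the same argument'' as the proof of \cite[Proposition 2.26]{MLeft}. It then obtains the coefficient formula from Proposition~\ref{prop_G_dense}, exactly as you do. You replace the citation with a self-contained computation. The Taylor coefficients $g_{n,m}$ give the exact norm $\norm{G}^2=\sum_{n,m}|g_{n,m}|^2/((n+1)(m+1))$, and the criterion reduces to estimates on binomial sums. This is the rigorous version of the heuristic $\binom{n+m}{n}r^ns^m/|\zeta_1-\zeta_2|^{n+m}\le\sigma^{n+m}$ in the Introduction. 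Your route buys an explicit two-dimensional proof and an exact norm formula; the citation buys uniformity with the higher-dimensional treatment in \cite{MLeft}.

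Two simplifications are available to you. First, the ``if'' direction needs no computation: once the closed disks are disjoint, the pair lies in $\CE(2)$, and $\CE(2)\subset\CEh(2)$ by Proposition~\ref{prop_sub_CEh}. Second, the ``main obstacle'' you flag in the tangent case disappears if you use Cauchy--Schwarz instead of a local limit theorem. Put $p=r/d$, $q=s/d$ (so $p+q=1$) and $b_N(n)=\binom{N}{n}p^nq^{N-n}$. Then $\sum_{n=0}^N b_N(n)^2\ge (N+1)^{-1}$, and $(n+1)(N-n+1)\le (N+2)^2/4$. Hence the $N$-th term of your norm series is at least $\frac{r^2s^2}{d^4}\cdot\frac{4(N+1)}{(N+2)^2}$, and these terms sum to infinity.
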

\begin{proof}
The equivalence about square-integrability can be proved by exactly the same argument as in the proof of \cite[Proposition 2.26]{MLeft}.
The final claim follows from Proposition \ref{prop_G_dense}.
%square-integrable に関する同値性は \cite[]{MLeft}の証明と全く同様の議論で示せる。
%最後の主張は命題 \ref{prop_G_dense}より従う。
\end{proof}

\subsection{Construction of conformally flat disk 2-algebra}\label{sec_construction}
In this section, we construct a $\CEh$-algebra using Proposition \ref{prop_contraction_covariance}.
Many parts of the argument are parallel to \cite[Section 2.5]{MLeft}.
%多くの部分は\cite[Section ]{MLeft}とパラレルである.
Set $A^2(\bD)=\Hf$ and $A=\Sym(\Hf)$ for short.

Let $(g_1,g_2) \in \CEh(2)$.
Similarly to \eqref{eq_pa_phi_cont}, for $p,q \geq 0$ we define
\begin{align}
\hat{C}_{g_1,g_2}:\Hf^{\hotimes p} \hotimes \Hf^{\hotimes q}
\rightarrow
\Hf^{\hotimes p-1} \hotimes \Hf^{\hotimes q-1}
\label{eq_hat_C_def}
\end{align}
as follows.
If $p,q \geq 1$, then on the algebraic tensor product
$\Hf^{\otimes p}\otimes \Hf^{\otimes q}$ we set
\begin{align*}
\hat{C}_{g_1,g_2}&(v_1 \otimes \dots \otimes v_p,\,
w_1 \otimes \dots \otimes w_q)\\
&=
\sum_{\substack{p \geq i \geq 1\\ q \geq j \geq 1}}
{C}_{g_1,g_2}(v_i,w_j) (v_1 \otimes \dots \hat{v_i} \dots \otimes v_p)
\otimes
(w_1 \otimes \dots \hat{w_j} \dots \otimes w_q),
\end{align*}
for any $v_1 \otimes \dots \otimes v_p \in \Hf^{\otimes p}$ and
$w_1 \otimes \dots \otimes w_q \in \Hf^{\otimes q}$.

This operator is bounded and induces
a bounded linear map on the completions, as in \eqref{eq_hat_C_def}.
If $p=0$ or $q=0$, we set $\hat{C}_{g_1,g_2}=0$.
Similarly to Lemma \ref{lem_cont_sym}, 
the restriction of $\hat{C}_{g_1,g_2}$ induces a bounded linear map from $\Sym^p \Hf \hotimes \Sym^q(\Hf) \rightarrow \Sym^{p-1}\Hf \hotimes \Sym^{q-1}\Hf$ (see \cite[Lemma 2.32]{MLeft}).
Since $\hat{C}_{g_1,g_2}$ is a degree-lowering operator, for each $p,q \geq 0$ the action of
\begin{align*}
\exp(\hat{C}_{g_1,g_2}) = \sum_{k \geq 0} \frac{1}{k!} \hat{C}_{g_1,g_2}^k
\end{align*}
on $\Hf^p \otimes \Hf^q$ is a finite sum.
Therefore, $\exp(\hat{C}_{g_1,g_2}):A \otimes A \rightarrow A$ is well defined, and its restriction
to $\Hf^p \hotimes \Hf^q$ is a bounded linear operator.

%$\hat{C}_{g_1,g_2}$は次数を下げる作用素であるから各$p,q \geq 0$に対して、$\Sym^p \Hf \otimes \Sym^q \Hf$への
%\begin{align*}
%\exp(\hat{C}_{g_1,g_2}) = \sum_{k \geq 0} \frac{1}{k!} \hat{C}_{g_1,g_2}^k
%\end{align*}
%の作用は有限和になる。よって$\exp(\hat{C}_{g_1,g_2}):V \otimes V \rightarrow V$は well-defined であり、その
%$H^p \otimes H^q$への制限は bounded linear map である。

%For $(g_1,g_2) \in \CE(2)$, 線形写像
%\begin{align*}
%\hat{C}_{g_1,g_2}: V \otimes V \rightarrow V
%\end{align*}
%をその$\Sym^p \Hf \otimes \Sym^q \Hf$ with $p,q \geq 1$への制限が linear map $\Sym^p \Hf \otimes \Sym^p \Hf \rightarrow \Sym^{p-1}\Hf \otimes \Sym^{q-1} \Hf$で
%\begin{align*}
%\hat{C}_{g_1,g_2}(v_1 \otimes \dots \otimes v_p, w_1 \otimes \dots \otimes w_q)
%= \sum_{\substack{p \geq i \geq 1\\q \geq j \geq 1}} C_{g_1,g_2}(v_i,w_j)(v_1 \otimes \dots \hat{v_i} \dots \otimes v_p) \otimes  (w_1 \otimes \dots 
%\hat{w_j} \dots \otimes w_q)
%\end{align*}
%for any $v_1 \otimes \dots \otimes v_p \in \Sym^p \Hf$ and $w_1 \otimes \dots \otimes w_q \in \Sym^q \Hf$
%であり、$\Sym^p \Hf \otimes \Sym^q \Hf$ with $p$ または$q$がゼロの空間への制限は$0$写像であるように定義する。
%
%によって定まるものになるように定義する.
%Here $v_1 \otimes \dots \hat{v_i} \dots \otimes v_p$ denotes the vector obtained by omitting $v_i$.
%また$p$または$q$が$0$の場合は$\C=\Sym^0 \Hf$として、$\hat{C}_{g_1,g_2}=0$によって射を定める。

%ただし$\hat{v_i}$は$v_i$をのぞいたテンソルを表す。$\hat{C}_{g_1,g_2}$は有界な作用素の有限和であるから有界である。
%
%\begin{lem}\label{lem_cont_hat_cov}
%For any $h \in \fS$ and $a_1 \in \Sym^p H$, $a_2 \in \Sym^q H$,
%\begin{align*}
%(h \otimes h)\hat{C}_{g_1,g_2}(g_1a_1,g_2a_2) = \hat{C}_{hg_1,hg_2}(hg_1a_1,hg_2a_2).
%\end{align*}
%\end{lem}

Set $\rho_1=\rho: \CEh(1) \rightarrow \End A$.
Let $(g_1,\dots,g_n) \in \CEh(n)$ for $n \geq 2$.
For $i,j \in \{1,\dots,n\}$ with $i\neq j$, we denote by
$\exp(\hat{C}_{g_i,g_j}^{i,j})$ the operator on $A^{\otimes n}$ obtained by letting
$\exp(\hat{C}_{g_i,g_j})$ act on the $i$-th and $j$-th tensor factors.
Then, the operators $\exp(\hat{C}_{g_i,g_j}^{i,j})$ and
$\exp(\hat{C}_{g_k,g_l}^{k,l})$ commute with each other.
The operator
$\exp(\sum_{n \geq i > j \geq 1} \hat{C}_{g_i,g_j}^{i,j})$
is the composition of all these mutually commuting operators.
We also define $\bs:A^{\otimes n} \rightarrow A$ so that its restriction to
$\Sym^{p_1}\Hf \otimes \cdots \otimes\Sym^{p_n} \Hf$ coincides with the restriction of $\bs^{p_1+\dots+p_n}:\Hf^{\hotimes p_1+\dots+p_n} \rightarrow \Sym^{p_1+\dots+p_n}(\Hf)$.

We define a linear map
$\m_{g_1,\dots,g_n}: A^{\otimes n} \rightarrow A$
by the following composition:
\begin{align*}
A^{\otimes n}
\overset{\exp(\sum_{n \geq i > j \geq 1} \hat{C}_{g_i,g_j}^{i,j})}{\longrightarrow}
A^{\otimes n}
\overset{\rho_1(g_1)\otimes \dots \otimes \rho_1(g_n)}{\longrightarrow}
A^{\otimes n}
\overset{\bs}{\longrightarrow}
A.
\end{align*}
Finally, for $*=(\emptyset \rightarrow \bD) \in \CE(0)$, we define
\begin{align*}
\rho_0(*):\C \rightarrow A,\quad 1 \mapsto \va.
\end{align*}
\begin{thm}\label{thm_CF}
$(A,\m,\va)$ is a $\CEh$-algebra in $\Vect$ with the Hilbert space filtration $\Hf^k = \bigoplus_{p = 0}^k \Sym^p \Hf$ \cite[Definition 1.27]{MLeft}. Moreover, 
for any $k \geq 0$, the monoid representation $\rho_1: \CEh(1) \rightarrow \bB(\Hf^k)$ satisfies the conditions (U) and (D) in \cite[Definition 1.32]{MLeft}.
\end{thm}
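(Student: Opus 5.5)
The plan is to verify the operad axioms for $\m$ directly, reducing every identity to the covariance properties of the contractions in Proposition~\ref{prop_contraction_covariance}, the cocycle identity of Lemma~\ref{lem_partial_cocycle}, and the monoid property of Theorem~\ref{thm_semigroup_quantum}, in parallel with \cite[Section 2.5]{MLeft}. First I check the easy structural points. Equivariance under $S_n$ holds because $\bs$ is symmetric and $C_{g_i,g_j}(v,w)=C_{g_j,g_i}(w,v)$, since $G_{g_i,g_j}(z,w)=G_{g_j,g_i}(w,z)$. The unit axiom follows from $\rho_1(\id_\bD)=\id$. Compatibility with $\rho_0(*)=\va$ holds because $\hat C$ vanishes whenever one argument lies in $\Sym^0\Hf$, and $\rho_1(g)\va=\va$. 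Hence inserting $\va$ in the $i$-th slot simply forgets $g_i$. For the filtration, $\m_{g_1,\dots,g_n}$ maps $\Hf^{k_1}\otimes\cdots\otimes\Hf^{k_n}$ into $\Hf^{k_1+\cdots+k_n}$ and is bounded there. This is because each ingredient is bounded on each fixed-degree piece: $\hat C$ by Proposition~\ref{prop_G_dense}, $\rho_1$ by Theorem~\ref{thm_semigroup_quantum}, and $\bs$ is a projection. The exponentials are finite sums on each such piece.

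The core step is the composition axiom $\m_{f}\circ_i \m_{g}=\m_{f\circ_i g}$. Using symmetry, it suffices to treat $i=1$, $f=(f_1,\dots,f_n)$, $g=(g_1,\dots,g_m)$. The strategy is to commute all classical and quantum pieces to the right, as in the proof of Theorem~\ref{thm_semigroup_quantum}.
\begin{enumerate}
\item Write $\rho_1(f_1)\bs=\bs\,\rho_\cl(f_1)^{\otimes}\exp(\pa_{f_1})$ and show that $\exp(\pa_{f_1})$ applied to a symmetrized product of outputs of $\rho_1(g_k)$ splits into the self-contractions $\pa_{f_1}\circ(\rho_\cl(g_k)\otimes\rho_\cl(g_k))$ within each factor and the cross contractions $\pa_{f_1}\circ(\rho_\cl(g_k)\otimes\rho_\cl(g_l))$ between factors. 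This is the Leibniz/Wick property of a second-order contraction on the symmetric algebra.
\item Combine the self-terms with $\exp(\pa_{g_k})$ via Lemma~\ref{lem_partial_cocycle} to get $\exp(\pa_{f_1g_k})$. Combine the cross terms with $\hat C_{g_k,g_l}$ via Proposition~\ref{prop_contraction_covariance}(2) to get $\hat C_{f_1g_k,f_1g_l}$.
\item Push $\hat C_{f_1,f_j}$ past $\rho_\cl(g_k)$ using Proposition~\ref{prop_contraction_covariance}(1), which yields $\hat C_{f_1g_k,f_j}$. Also use that a contraction applied after $\bs$ to the first slot distributes over all the $m$ subfactors, so it becomes $\sum_k\hat C^{k,j}$.
\end{enumerate}
After these rearrangements, both sides become $\bs\circ\bigotimes\rho_\cl(\cdot)\circ\exp(\sum\pa)\circ\exp(\sum\hat C)$ with the same data. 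One also uses that all the relevant contraction operators commute, which holds because they act on distinct tensor pairs or are scalar pairings.

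Since every identity is between bounded maps on fixed-degree pieces, it suffices to check it on products of $E_a$'s, which span a dense subspace by Proposition~\ref{prop_dense}. On these vectors everything reduces to the pointwise identities of Proposition~\ref{prop_FG_equality}. I expect the main obstacle to be the combinatorial bookkeeping in step 1: showing that $\exp(\pa_{f_1})\circ\bs\circ\exp(\sum\hat C^{k,l}_{g_k,g_l})$ reorganizes exactly into the product of exponentials, with the right multiplicities and without double counting. I would handle this by working with generating vectors $E_{a}^{\otimes p}$, on which all operators act by explicit scalar factors.

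Finally, for conditions (U) and (D) of \cite[Definition 1.32]{MLeft}:
\begin{itemize}
\item (U) is the strongly continuous unitary $\mathrm{PSU}(1,1)$-representation on $\Hf^k$, which is the last assertion of Theorem~\ref{thm_semigroup_quantum}.
\item (D) concerns the dilations. Since $F_{B_{0,r}}=0$, we have $\rho_1(B_{0,r})=\rho_\cl(B_{0,r})$. This acts on $\Sym^p$ as the $p$-fold tensor power of the operator in Lemma~\ref{lem_dilation}. It is therefore a strongly continuous self-adjoint contractive semigroup, Hilbert--Schmidt for $r<1$ because a finite tensor power of a Hilbert--Schmidt operator is Hilbert--Schmidt, and it has simultaneous eigenvectors given by symmetrized monomials.
\end{itemize}
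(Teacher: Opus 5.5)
Your argument for the algebra axioms is essentially the paper's. The paper likewise rewrites $\m_{f\circ g}$ using Proposition~\ref{prop_contraction_covariance}(1) and (2), the monoid property of Theorem~\ref{thm_semigroup_quantum} (which packages Lemma~\ref{lem_partial_cocycle}), commutation of the contraction operators, and insertion of $\bs$. It also uses precisely the Wick splitting you flag as the main obstacle: $\rho_1(g)$ applied after $\bs$ equals the factorwise action of $\rho_1(g)$ followed by the cross contractions $\pa_g^{ij}$ between distinct factors.

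One caution for your bookkeeping. Test on $E_{a_1}\otimes\cdots\otimes E_{a_p}$ with independent points, not on $E_a^{\otimes p}$ with a single repeated $a$. The latter are not total in $\Sym^p\Hf$ for $p\geq 2$, since they are orthogonal to symmetric functions vanishing on the diagonal, such as $(z_1-z_2)^2$.

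The genuine gap is in (D). You verify it level by level: self-adjoint, contractive, strongly continuous, and Hilbert--Schmidt on each $\Sym^p\Hf$. The paper instead deduces (D) from the convergence of the trace of the dilation over all of $A$, namely $\mathrm{tr}|_A\,\rho_{\cl}(B_{0,r})=\prod_{n>0}(1-r^n)^{-1}$ for $0<r<1$. That is a statement summed over every particle number, and it does not follow from any fixed-$p$ property. For example, a one-particle contraction with eigenvalues $1,r^2,r^3,\dots$ is Hilbert--Schmidt with Hilbert--Schmidt symmetric powers, yet $\sum_p\mathrm{tr}_{\Sym^p}$ diverges.

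The missing step is short. The symmetrized monomials $\bs^p(z^{n_1}\otimes\cdots\otimes z^{n_p})$ form an orthogonal eigenbasis of $\Sym^p\Hf$ with eigenvalues $r^{\sum_i(n_i+1)}$. Summing over all multisets of all sizes gives the partition generating function $\prod_{k\geq1}(1-r^k)^{-1}$. This converges because the one-particle eigenvalues $r,r^2,r^3,\dots$ are summable and strictly less than $1$.
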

\begin{proof}
We first verify that $A$ is a $\CEh$-algebra (see 
\cite[Definition 1.10]{MLeft}).
The $S_n$-invariance is clear. The equality $\rho_1(\id_\bD)=\id_A$ also follows from the definition.
Let $(g_1,\dots,g_n,g_{n+1}) \in \CEh(n+1)$ and $(h_1,\dots,h_m) \in \CEh(m)$.
We will show that
\begin{align}
\m_{g_1,\dots,g_n,g_{n+1}h_1,\dots,g_{n+1}h_m}=
\m_{(g_1,\dots,g_n,g_{n+1})}\circ_{n+1} \m_{(h_1,\dots,h_m)}.\label{eq_def_of_operad}
\end{align}
For simplicity, we write $\hat{C}_{g_i,g_j}^{i,j}$ as $\hat{C}_{g_i,g_j}$, and we write $\rho_1(g_i)$ (resp. $\rho_\cl(g_i)$) simply as $g_i$ (resp. $g_i^\cl$).
The case $m=0$ in which \eqref{eq_def_of_operad} holds is clear, since $\hat{C}$ and $\rho_1$ act trivially on $\va$.
If $m \geq 1$, then by Proposition \ref{prop_contraction_covariance} we have
\begin{align*}
&\m_{g_1,\dots,g_n,g_{n+1}h_1,\dots,g_{n+1}h_m}\\
&=\bs\left((g_1 \otimes \dots \otimes g_n \otimes g_{n+1}h_1 \otimes \dots g_{n+1} h_m)
e^{\sum \hat{C}_{g_i,g_j}}
e^{\sum \hat{C}_{g_i,g_{n+1}h_j}}
e^{\sum \hat{C}_{g_{n+1}h_i,g_{n+1}h_j}}\right)\\
&=\bs\left((g_1 \otimes \dots \otimes g_n \otimes g_{n+1}h_1 \otimes \dots g_{n+1} h_m)
e^{\sum \hat{C}_{g_i,g_j}}
e^{\sum \hat{C}_{g_i,g_{n+1}}(\id\otimes h_j^\cl)}
e^{\sum \hat{C}_{h_i,h_j}+\pa_{g_{n+1}}(h_i^\cl\otimes h_j^\cl)}\right)\\
&=\bs\left((g_1 \otimes \dots \otimes g_n \otimes g_{n+1} \otimes \dots \otimes g_{n+1})
(\id \otimes h_1 \otimes \dots \otimes h_m)
e^{\sum \hat{C}_{g_i,g_j}}
e^{\sum \hat{C}_{g_i,g_{n+1}}(\id\otimes h_j^\cl)}
e^{\sum \hat{C}_{h_i,h_j}+\pa_{g_{n+1}}(h_i^\cl\otimes h_j^\cl)}\right)\\
&=\bs\left((g_1 \otimes \dots \otimes g_n \otimes g_{n+1} \otimes \dots \otimes g_{n+1})
e^{\sum \hat{C}_{g_i,g_j}}
e^{\sum \hat{C}_{g_i,g_{n+1}}}e^{\sum \pa_{g_{n+1}}^{ij}}
(\id \otimes h_1 \otimes \dots \otimes h_m)
e^{\sum \hat{C}_{h_i,h_j}}\right)\\
&=\bs\left((g_1 \otimes \dots \otimes g_n \otimes g_{n+1} \otimes \dots \otimes g_{n+1})e^{\sum \pa_{g_{n+1}}^{ij}}
e^{\sum \hat{C}_{g_i,g_j}}
e^{\sum \hat{C}_{g_i,g_{n+1}}}\bs
(\id \otimes h_1 \otimes \dots \otimes h_m)
e^{\sum \hat{C}_{h_i,h_j}}\right)
\end{align*}
%ここで$\pa_{g_{n+1}}^{ij}$は$h_i,h_j$が作用する$A$の成分上の contraction であり、最後の行ではwe have used the fact that, for a linear operator symmetric in its inputs, one may insert the symmetrization operator $\bs$.
%$g_{n+1}=\rho_\cl(g_{n+1})e^{\pa_{g_{n+1}}}$であるから$g_{n+1}\otimes \cdots \otimes g_{n+1}$の$A^{m}$への作用は
%異なる成分の間の縮約$\pa_{g_{n+1}}$が存在しないことに注意すると、上記の式は
Here $\pa_{g_{n+1}}^{ij}$ denotes the contraction on the $A$-components on which $h_i$ and $h_j$ act. 
In the last line, we have used the fact that for a linear operator symmetric in its inputs, one may insert the symmetrization operator $\bs$.

The operator 
$g_{n+1}\otimes \cdots \otimes g_{n+1}$ acts on $A^{m}$ without inducing any contractions $\pa_{g_{n+1}}$ between different tensor factors. 
The above expression simplifies to
\begin{align*}
&=\bs\left((g_1 \otimes \dots \otimes g_n \otimes {g}_{n+1})e^{\sum \hat{C}_{g_i,g_j}}
e^{\sum \hat{C}_{g_i,g_{n+1}}}\bs
(\id \otimes h_1 \otimes \dots \otimes h_m)
e^{\sum \hat{C}_{h_i,h_j}}\right)\\
&=\rho_{g_1,\dots,g_n,g_{n+1}}\circ_{n+1} \rho_{h_1,\dots,h_m}.
\end{align*}
By Proposition \ref{prop_monoid_classical} and Proposition \ref{prop_cocycle_vanish},
\begin{align*}
\rho_1|_{\fG}=\rho_\cl|_{\fG}: \fG \rightarrow \bB(\Hf^k)
\end{align*}
is a strongly continuous unitary representation.
Moreover, by Lemma~\ref{lem_dilation} and Proposition~\ref{prop_cocycle_vanish},
\begin{align*}
(0,1] \rightarrow \bB(\Hf^k),\quad r\mapsto \rho_\cl(B_{0,r})
\end{align*}
is a strongly continuous, self-adjoint, contractive representation.
A straightforward computation shows that its trace is
\begin{align*}
\mathrm{tr}|_{A} \rho_\cl(B_{0,r})
= \prod_{n > 0} \frac{1}{(1-r^{n})},
\end{align*}
which converges absolutely for $0<r<1$.
Hence condition~(D) of \cite[Definition~1.32]{MLeft} follows.
%By Proposition \ref{prop_monoid_classical} and Proposition \ref{prop_cocycle_vanish}より、
%\begin{align*}
%\rho_1|_{\fG}=\rho_\cl|_{\fG}: \fG \rightarrow B(\Hf^k)
%\end{align*}
%は強連続な unitary 表現である。
%また Lemma \ref{lem_dilation} and Proposition \ref{prop_cocycle_vanish}より、
%\begin{align*}
%(0,1] \rightarrow B(\Hf^k),\quad r\mapsto \rho_\cl(r^D)
%\end{align*}
%は強連続, self-adjoint, contraction representation であり、簡単な計算によりその trace は
%\begin{align*}
%\mathrm{tr}|_{A} \rho_\cl(q^D)
%= \prod_{n > 0} \frac{1}{(1-q^{n})}\Bigl|_{|q|<1}
%\end{align*}
%converges absolutely for $|q|<1$. よって \cite[Definition 1.32]{MLeft}の(D)が従う。
\end{proof}

Define a linear isomorphism $\cN:A \rightarrow A$ by
\begin{align*}
\cN|_{\Sym^p \Hf} = \sqrt{p!}\,\id_{\Sym^p \Hf}.
%\bn^{p} &=  \bs^p,\\
%\bn^{p_1,\dots,p_n} &= \sqrt{ \frac{p_1! \cdots p_n!}{(p_1+\dots+p_n)!}} \bs^{p_1,\dots,p_n}.
\end{align*}
Then $\cN|_{H^k}$ is a bounded linear map for each $k \geq 0$, while $\cN$ itself is not bounded.
Using this linear isomorphism, we can endow $A$ with a $\CEh$-algebra structure by setting
\begin{align}
\rho_{g_1,\dots,g_n}^\cN
= \cN \circ \rho_{g_1,\dots,g_n} \circ (\cN^{-1}\otimes \cdots \otimes \cN^{-1}).\label{eq_sqrt}
\end{align}
%The $\CE$-algebra $(A,\rho^\cN,\va)$ を考える重要性は affine Heisenberg vertex algebra との比較で明らかになる (see also Remark in \cite{MLeft}).
The importance of considering the normalized $\CEh$-algebra $(A,\rho^\cN,\va)$ becomes apparent when comparing it with the affine Heisenberg vertex algebra (see also \cite[Remark 2.35]{MLeft}).
By Lemma \ref{lem_CE2}, a $\CEh$-algebra determines a $\CE$-algebra by restriction.
By Theorem \ref{thm_CF} and \cite[Theorem 3.5]{MLeft}, we have:
\begin{thm}\label{thm_CF_algebra}
The $\CE$-algebra $\Sym A^2(\bD)$ defines a symmetric monoidal functor $A:\Disk \rightarrow  \Ind\,\Hilb$.
Moreover, its left Kan extension along $\Disk \hookrightarrow \Embc$
defines a symmetric monoidal functor
\begin{align*}
%H(\bullet, \Sym A^2(\bD))=
\mathrm{Lan}_{\Sym A^2(\bD)}:\Embc \rightarrow \Ind\,\Hilb.
\end{align*}
\end{thm}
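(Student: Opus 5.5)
The plan is to reduce Theorem~\ref{thm_CF_algebra} to the general machinery of \cite{MLeft}. Its main result, \cite[Theorem 3.5]{MLeft}, says: if a $\CE$-algebra in $\Vect$ has a Hilbert space filtration in the sense of \cite[Definition 1.27]{MLeft}, and its monoid representation satisfies conditions (U) and (D) of \cite[Definition 1.32]{MLeft}, then it yields a symmetric monoidal functor $\Disk \to \Ind\Hilb$ whose left Kan extension along $\Disk\hookrightarrow\Embc$ is symmetric monoidal. So the whole task is to check these hypotheses for the restriction of $(A,\m,\va)$ from $\CEh$ to $\CE$.

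First, I restrict the structure. By Proposition~\ref{prop_sub_CEh}, $\CE\subset\CEh$ as suboperads of $\CEc$ (closure under composition is Lemma~\ref{lem_CE2}). Hence pulling back the structure maps $\m$ of Theorem~\ref{thm_CF} along this inclusion gives a $\CE$-algebra on $A=\Sym A^2(\bD)$. The operad axioms \eqref{eq_def_of_operad}, $S_n$-invariance and unitality hold automatically, since they are identities already verified on the larger operad.

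Second, I transfer the analytic conditions, which are all stated in terms of elements of $\CE$.
\begin{itemize}
\item The filtration $\Hf^k$ is the same. Each $\m_{g_1,\dots,g_n}$ maps $\Hf^{k_1}\otimes\cdots\otimes\Hf^{k_n}$ boundedly into $\Hf^{k_1+\cdots+k_n}$. This holds because the map is a finite composite of bounded operators: finitely many terms of $\exp(\hat C)$, the operators $\rho_1(g_i)|_{\Hf^k}$ (bounded by Theorem~\ref{thm_semigroup_quantum}), and $\bs$.
\item Condition (U) uses only $\fG\subset\CE(1)$, and holds by Proposition~\ref{prop_monoid_classical} together with Proposition~\ref{prop_cocycle_vanish}.
\item Condition (D) uses only the dilations $B_{0,r}\in\CE(1)$ and the trace bound, both established in Theorem~\ref{thm_CF}.
\end{itemize}
If \cite[Definition 1.32]{MLeft} also requires continuity or compatibility of the multiplication maps under the $\fG$-action, I will check it on the dense spans of the $E_a$ vectors, using Propositions~\ref{prop_G_dense} and \ref{prop_contraction_covariance}.

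The main obstacle is confirming that the hypotheses of \cite[Theorem 3.5]{MLeft} match exactly what was proved, especially any positivity or Hilbert--Schmidt requirement on the dilations. For the dilations, Lemma~\ref{lem_dilation} shows that $\rho_\cl(B_{0,r})$ is Hilbert--Schmidt for $r<1$ on $A^2(\bD)$. I would extend this to each $\Sym^p$ by taking tensor powers, using the explicit eigenvalues $r^{n_1+\cdots+n_p+p}$. Once these points are confirmed, both assertions of the theorem follow directly from \cite[Theorem 3.5]{MLeft}.
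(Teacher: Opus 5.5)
Your proposal is correct and is the paper's argument. The paper restricts the $\CEh$-algebra of Theorem~\ref{thm_CF} along $\CE\subset\CEh$ (Proposition~\ref{prop_sub_CEh}, Lemma~\ref{lem_CE2}), observes that the filtration and conditions (U) and (D) proved there involve only elements of $\CE$, and applies \cite[Theorem 3.5]{MLeft}; the extra checks you plan (boundedness of each $\m_{g_1,\dots,g_n}$ on the filtration pieces, trace or Hilbert--Schmidt bounds on $\Sym^p$) are already covered by Theorem~\ref{thm_CF} and its proof, so they are harmless but not needed.
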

Thus we obtain invariants of two-dimensional Riemannian manifolds with coefficients in $\Sym A^2(\bD)$.
%よって$\Sym A^2(\bD)$を係数とする二次元リーマン多様体の不変量を得る。

Finally, we end this section by examining the complex conjugate of the $\CE$-algebra $\Sym A^2(\bD)$.
%An important quantity characterizing a $\CE$-algebra is the two-point correlation function, defined by
%$(\va, \rho_{B_{a,r},B_{b,s}}(e_0(z),e_0(z)))$
%for $a,b\in \bD$ and $1>r,s>0$, using 
%(the reason for considering this quantity will become clear in Section~\ref{sec_vertex}).
The following holds:
\begin{prop}\label{prop_conjugate}
For the $\CE$-algebra $\Sym A^2(\bD)$ and its complex conjugate,
\begin{align}
(\va, \rho_{B_{a,r},B_{b,s}}(e_0(z),e_0(z))) &= \frac{rs}{({a}-{b})^2},\label{eq_two_A}\\
(\va, \rho_{B_{a,r},B_{b,s}}^\J(e_0(z),e_0(z))) &= \frac{rs}{(\bar{a}-\bar{b})^2}\label{eq_two_A2}
\end{align}
for any $(B_{a,r},B_{b,s}) \in \CE(2)$ in Proposition \ref{prop_contraction_trans}.
\end{prop}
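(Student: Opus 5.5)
Unwinding the definition of $\m_{B_{a,r},B_{b,s}}$ on the input $e_0\otimes e_0$, with $e_0(z)=1\in\Sym^1 A^2(\bD)$, gives the result directly. The operator $\exp(\hat{C}_{B_{a,r},B_{b,s}})$ acting on $\Sym^1\Hf\otimes\Sym^1\Hf$ stops after the linear term, since $\hat{C}$ lowers each degree by one:
\begin{align*}
\exp(\hat{C})(e_0\otimes e_0)=e_0\otimes e_0 + C_{B_{a,r},B_{b,s}}(e_0\otimes e_0)\,\va\otimes\va .
\end{align*}
Next we apply $\rho_1(B_{a,r})\otimes\rho_1(B_{b,s})$. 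Each $B_{a,r}$ is affine, hence a linear fractional transformation, so by Proposition~\ref{prop_cocycle_vanish} we have $F_{B_{a,r}}=0$ and $\pa_{B_{a,r}}=0$. Thus $\rho_1=\rho_\cl$; it preserves the degree of $e_0$ and fixes $\va$. After symmetrization, the first term lies in $\Sym^2\Hf$ and the second in $\Sym^0\Hf=\C\va$.

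Pairing with $\va$ therefore picks out only the scalar $C_{B_{a,r},B_{b,s}}(e_0\otimes e_0)$. By Proposition~\ref{prop_contraction_trans} with $n=m=0$, this equals $rs\,(z-w)^{-2}|_{z-w=a-b}=rs/(a-b)^2$, which is \eqref{eq_two_A}. Alternatively, Proposition~\ref{prop_G_dense} gives $g_{0,0}=G(0,0)=rs/(a-b)^2$ directly from \eqref{eq_h_trans}.

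For \eqref{eq_two_A2}, Definition~\ref{def_conjugate} gives $\rho^\J_{B_{a,r},B_{b,s}}=\rho_{\J B_{a,r},\J B_{b,s}}$. Here $\J(B_{a,r})(z)=\overline{r\bar z+a}=rz+\bar a=B_{\bar a,r}$, using that $r$ is real. The pair $(B_{\bar a,r},B_{\bar b,s})$ is still in $\CE(2)$, because complex conjugation preserves the disjointness of closed disks; this is also the stability in Proposition~\ref{prop_sub_CEh}. Applying the first computation then yields $rs/(\bar a-\bar b)^2$.

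The only point needing care is the bookkeeping of the normalization. Both $\bs$ on $\Sym^0\otimes\Sym^0$ and $\va$ are normalized to $1$, so no stray factors appear, and the degree-$2$ part is orthogonal to $\va$. None of the steps poses a real obstacle.
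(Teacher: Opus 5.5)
Your proof is correct and follows the paper's argument. The vacuum component of $\rho_{B_{a,r},B_{b,s}}(e_0,e_0)$ is the single contraction $C_{B_{a,r},B_{b,s}}(e_0\otimes e_0)=rs/(a-b)^2$ from Proposition~\ref{prop_contraction_trans}, and the conjugate case reduces to it via $\J(B_{a,r})=B_{\bar a,r}$; you only spell out the unwinding of $\exp(\hat C)$, $\rho_1$ and $\bs$ that the paper leaves implicit (and $\exp(\pa_\phi)$ acts trivially on degrees $\leq 1$, so Proposition~\ref{prop_cocycle_vanish} is not even needed there).
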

\begin{proof}
By Proposition \ref{prop_contraction_trans}, we have
\begin{align*}
(\va, \rho_{B_{a,r},B_{b,s}}(e_0(z),e_0(z))) 
&= C_{B_{a,r},B_{b,s}}(e_0(z),e_0(z)) = \frac{rs}{(a-b)^2},
\end{align*}
and by the definition of the complex conjugate (Definition~\ref{def_conjugate}),
\begin{align*}
(\va, \rho_{B_{a,r},B_{b,s}}^\J(e_0(z),e_0(z))) 
&= (\va, \rho_{\J B_{a,r},\J B_{b,s}}(e_0(z),e_0(z))) \\
&= (\va, \rho_{B_{\ba,r}, B_{\bar{b},s}}(e_0(z),e_0(z))) 
= \frac{rs}{(\bar{a}-\bar{b})^2}.
\end{align*}
\end{proof}

Readers familiar with vertex operator algebras will recognize that \eqref{eq_two_A}
coincides with the two-point correlation function of the affine Heisenberg vertex algebra, while \eqref{eq_two_A2}
coincides with that of 
%its complex conjugate, namely the 
anti-holomorphic affine Heisenberg vertex algebra \cite{M1}.
We examine this correspondence in more detail in the next section.

%
%
%最後に$\CE$代数$\Sym A^2(\bD)$の complex conjugate を調べてこの章を終える.
%$\CE$代数を特徴づける重要な量は二点相関関数であり、命題\ref{prop_contraction_trans}の$a,b\in \bD$ and $1>r,s>0$を用いて$(\va, \rho_{B_{a,r},B_{b,s}}(e_0(z),e_0(z)))$によって定義される (なぜこの量を考えるかはSection \ref{sec_vertex}で明らかになる)。
%このとき次が成り立つ:
%\begin{prop}\label{prop_conjugate}
%$\CE$代数$\Sym A^2(\bD)$およびその複素共役に対して,
%\begin{align}
%(\va, \rho_{B_{a,r},B_{b,s}}(e_0(z),e_0(z))) &= \frac{rs}{({a}-{b})^2},\label{eq_two_A}\\
%(\va, \rho_{B_{a,r},B_{b,s}}^\J(e_0(z),e_0(z))) &= \frac{rs}{(\bar{a}-\bar{b})^2}.\label{eq_two_A2}
%\end{align}
%\end{prop}
%\begin{proof}
%By Proposition \ref{prop_contraction_trans}, we have
%\begin{align*}
%(\va, \rho_{B_{a,r},B_{b,s}}(e_0(z),e_0(z))) 
%&= C_{B_{a,r},B_{b,s}}(e_0(z),e_0(z))) = \frac{rs}{(a-b)^2}
%\end{align*}
%and by definition of the complex conjugate, Definition \ref{def_conjugate},
%\begin{align*}
%(\va, \rho_{B_{a,r},B_{b,s}}^\J(e_0(z),e_0(z))) 
%&= (\va, \rho_{\J B_{a,r},\J B_{b,s}}(e_0(z),e_0(z))) \\
%&= (\va, \rho_{B_{\ba,r},\J B_{\bar{b},s}}(e_0(z),e_0(z))) = \frac{rs}{(\bar{a}-\bar{b})^2}.
%\end{align*}
%\end{proof}
%
%頂点作用素代数になじみがある読者は、\eqref{eq_two_A}が affine Heisenberg vertex algebra の二点相関関数に、\eqref{eq_two_A2}
%が その complex conjugate である 反正則な affine Heisenberg vertex algebra \cite{M1}の二点相関関数に等しいことが分かるだろう。
%こうした対応を次の章でより詳しく調べる。

\section{Relation with affine Heisenberg vertex algebra}\label{sec_relation}
%この章では $\CE$代数と vertex operator algebra の関係を調べる。我々は一般に unitary full vertex operator algebra に対して、その ind Hilbert space としての完備化として $\CE$代数が構成できると予想している。この章ではこうした対応関係を、 affine Heisenberg vertex algebra と $\Sym A^2(\bD)$の対応を見ることで確立する。Section \ref{sec_vertex}では affine Heisenberg vertex algebra の unitary 構造に関する基本的な事項をまとめる。
%Section \ref{sec_dictionary}では、affine Heisenberg vertex algebra と Bergman 空間の間のヒルベルト空間としての isometric isomorphism を用いて、
%代数構造を比較する。
%In this section, we study the relationship between $\CEh$-algebras and vertex operator algebras.
In this section, we establish the correspondence between the affine Heisenberg vertex operator algebra $M(0)$ and
the $\CEh$-algebra $\Sym A^2(\bD)$.
Based on this result, we expect that for any unitary full vertex operator algebra, one can construct a $\CEh$-algebra as its ind-Hilbert space completion.

In Section~\ref{sec_vertex}, we review the basic facts about the unitary structure of the affine Heisenberg vertex operator algebra.
In Section~\ref{sec_dictionary}, we give an isometric isomorphism of Hilbert spaces between the completion of the unitary affine Heisenberg vertex algebra and $\Sym A^2(\bD)$ and compare their algebraic structures.

\subsection{Review on affine Heisenberg vertex algebra}\label{sec_vertex}
We begin by briefly recalling the definition of the affine Heisenberg vertex algebra.
For the definition of a vertex operator algebra and its basic properties, see \cite{FLM,FB,FHL,LL,Kac}.
Let 
\begin{align*}
\fh = \bigoplus_{n \in \Z}\C h \otimes t^n \oplus \C c
\end{align*}
be a Lie algebra which satisfies the following commutator relations:
\begin{align*}
[h \otimes t^n,h \otimes t^m]=n\,\delta_{n+m,0}c
\end{align*}
and $c$ is a center.
Let $M(0)$ be the induced representation of $\fh$ given by
\begin{align*}
h \otimes t^n\cdot \va &=0, \qquad \text{ for any }n \geq 0,\\
c \cdot \va &=\va.
\end{align*}
Denote by $h(n) \in \End M(0)$ the action of $h \otimes t^n$ on $M(0)$.
As a vector space, $M(0)$ is spanned by the basis $\{h(-k_1)h(-k_2)\dots h(-k_n)\va\}_{n \geq 0, k_1 \geq k_2 \geq \dots \geq k_n \geq 1}$. Then, one can define a unique vertex algebra structure on $M(0)$ such that $\va \in M(0)$ is the vacuum vector and
 the linear map
\begin{align*}
Y(-,z):M(0) &\rightarrow \End M(0)[[z^\pm]],\\
a &\mapsto Y(a,z) = \sum_{n\in\Z}a(n)z^{-n-1},
\end{align*}
called a {\it vertex operator}, satisfies
%$(M(0),Y(-,z))$ is a vertex algebra with the vacuum vector $\va \in M(0)$ and 
\begin{align*}
Y(h(-1)\va,z) = \sum_{n \in \Z} h(n)z^{-n-1} \in \End M(0)[[z^\pm]].
\end{align*}
Set
\begin{align}
h^+(z) &=\sum_{n \geq 0}h(n)z^{-n-1},\\
h^-(z)&=\sum_{n \geq 0} h(-n-1)z^{n}.
\end{align}
This gives a decomposition of the vertex operator $Y(h(-1)\va,z)$ into its creation operator $h^-(z)$ and annihilation operator $h^+(z)$.
%これは頂点作用素$Y(h(-1)\va,z)$の生成演算子$h^-(z)$と消滅演算子$h^+(z)$への分解を与える。
Set $\om = \ft h(-1)h(-1)\va$ and define $L(n)\in \End M(0)$ by $Y(\om,z) = \sum_{n \in \Z} L(n)z^{-n-2}$.
Then,
\begin{align}
L(n) = \ft \left(\sum_{k \geq 0} h(n-k)h(k) + \sum_{k<0} h(k)h(n-k)\right)\label{eq_Ln}
\end{align}
satisfies the Virasoro commutator relation:
\begin{align*}
[L(n),L(m)]=(n-m) L(n+m) + \frac{n^3-n}{12} \delta_{n+m,0}.
\end{align*}
The pair of the vertex algebra $M(0)$ and $\om$ is called the {\it affine Heisenberg vertex operator algebra} \cite{FLM}.
For $p>0$, let $M_p(0)$ be the subspace of $M(0)$ spanned by
\begin{align*}
M_p(0) = \{h(-k_1)\dots h(-k_p)\va\}_{k_1 \geq \dots \geq k_p \geq 1}
\end{align*}
and set $M_0(0) = \C\va$. 
In physics, $p$ is referred to as the {\it number of particles} in free field theory.
%$p$は物理において自由場理論の粒子数と呼ばれる。
Set
\begin{align*}
M(0)^p = \bigoplus_{k \geq 0}^p M_k(0),
\end{align*}
which is a filtration on $M(0)$.
The space $M(0)$ carries the structure of a $\Z_{\geq 0}$-graded vector space via the action of $L(0)$:
\begin{align*}
M(0)_n = \{v \in M(0) \mid L(0)v = n v\}.
\end{align*}
Here 
$L(0)h(-k_1)\dots h(-k_p)\va = (k_1+\dots+k_p)h(-k_1)\dots h(-k_p)\va$.
In particular, $M(0)_0 = \C \va$ and $M(0)_1 = \C h(-1)\va$.
By \eqref{eq_Ln} and $h(0)=0$, we have:
%$M(0)$は$L(0)$の作用によって$\Z_{\geq 0}$-graded vector space の構造を持つ:
%\begin{align*}
%M(0)_n = \{v \in M(0) \mid L(0)v =n v\},
%\end{align*}
%ここで $L(0)h(-k_1)\dots h(-k_p)\va = (k_1+\dots+k_p)h(-k_1)\dots h(-k_p)\va$. とくに$M(0)_0 =\R \va$ and $M(0)_1 = \R h(-1)\va$である。
%\eqref{eq_Ln}と$h(0)=0$より、we have:
\begin{prop}\label{prop_particle_number}
Let $p \geq 0$. Then,
\begin{align*}
L(n) M_p(0) \subset 
\begin{cases}
M_p(0) & (\text{ if }n =-1,0,1)\\
M_p(0)\oplus M_{p-2}(0) & (\text{ if }n \geq 2)\\
M_p(0)\oplus M_{p+2}(0) & (\text{ if }n \leq -2).
\end{cases}
\end{align*}
%If $n=\{-1,0,1\}$, then $L(n)$ は $M_p(0)$を$M_p(0)$に送る。一方で、$n \geq 2$ (resp. )ならば、$L(n)$は$M_p(0)$ を$M_p(0) \oplus M_{p-2}(0)$ に送り、$n \leq -2$ならば$$
\end{prop}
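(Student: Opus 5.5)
The plan is a direct computation from formula \eqref{eq_Ln}, counting how many creation and annihilation operators each term contains. Since $h(0)$ is central in $\fh$ and annihilates $\va$, it acts as $0$ on all of $M(0)$. Moreover, for $k\ge 1$ the operator $h(-k)$ raises the particle number by one, so $h(-k)M_p(0)\subset M_{p+1}(0)$. The operator $h(k)$ with $k\ge1$ acts on $h(-k_1)\cdots h(-k_p)\va$ as the derivation $k\,\partial/\partial h(-k)$, by the commutation relation $[h(k),h(-k_i)]=k\delta_{k,k_i}$ together with $h(k)\va=0$. Hence $h(k)M_p(0)\subset M_{p-1}(0)$, with $M_{-1}(0)=0$.

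Next I rewrite $L(n)$ as a sum of normally ordered quadratic terms $h(a)h(b)$ with $a+b=n$ and $h(0)$ dropped. Each term falls into one of three types:
\begin{itemize}
\item[(i)] $a,b\geq 1$: two annihilators, so the term maps $M_p(0)$ into $M_{p-2}(0)$.
\item[(ii)] $a,b\leq -1$: two creators, so the term maps $M_p(0)$ into $M_{p+2}(0)$.
\item[(iii)] one index positive and one negative: a mixed term, which preserves $M_p(0)$.
\end{itemize}

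Type (i) requires $n=a+b\geq 2$, and type (ii) requires $n\le -2$. In particular, for $n\in\{-1,0,1\}$ neither is possible. For $n=\pm1$ the only decompositions with $a,b\neq0$ would need $a,b$ of the same sign summing to $\pm1$, which is impossible; for $n=0$ we have $b=-a$. So only mixed terms survive, and $L(n)M_p(0)\subset M_p(0)$. For $n\ge2$ no type (ii) term appears, giving $M_p(0)\oplus M_{p-2}(0)$; symmetrically, for $n\le -2$ no type (i) term appears, giving $M_p(0)\oplus M_{p+2}(0)$.

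The only delicate point is bookkeeping the normal ordering in \eqref{eq_Ln}. For $n\neq0$, $[h(a),h(b)]=a\delta_{a+b,0}=0$, so the order of factors is irrelevant and no constant terms appear. For $n=0$ the expression is already normally ordered, so again no correction arises. With these two checks the classification of terms above is exact, and the proposition follows.
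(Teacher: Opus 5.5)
Your proposal is correct and matches the paper's argument, which reads the statement directly off formula \eqref{eq_Ln} using $h(0)=0$: every surviving term $h(a)h(b)$ with $a+b=n$, $a,b\neq 0$, is either mixed (preserving $M_p(0)$), or has two annihilators (possible only for $n\geq 2$), or has two creators (possible only for $n\leq -2$). Your normal-ordering check is harmless but not actually needed, since a mixed product preserves $M_p(0)$ in either order and any scalar correction would also preserve it.
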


\begin{rem}
%命題\ref{prop_particle_number}にあるように一般にVirasoro の作用は粒子数を保存しない。
%$n \geq 2$ における二次微分 \eqref{eq_Ln}は調和コサイクルによる二次微分 \eqref{eq_pa_phi_cont}に対応する。
%また $\{L(n)\}_{n=-1,0,1}$が粒子数を保存することは$\{L(n)\}_{n=-1,0,1}$が$\mathrm{PSL}_2\C =\Conf^+(S^2)$を生成し、このような変換に対しては Proposition \ref{prop_cocycle_vanish}によって、$\rho_\cl =\rho_1$となることが対応する。こうした事実の
%distribution の正規化と Green 関数の境界条件からくる解釈は\cite{Mfactorization}を参照されたい。
As stated in Proposition \ref{prop_particle_number}, the Virasoro algebra do not preserve the particle number in general.
In fact, for $n \geq 2$, the second-order differential operator \eqref{eq_Ln} corresponds to the second-order differential operator \eqref{eq_pa_phi_cont} arising from the harmonic cocycle.
Moreover, the fact that $\{L(n)\}_{n=-1,0,1}$ preserves the particle number corresponds to the fact that $\{L(n)\}_{n=-1,0,1}$ generates $\mathrm{PSL}_2\C = \Conf^+(S^2)$, and for such transformations Proposition~\ref{prop_cocycle_vanish} implies that $\rho_1=\rho_\cl$.
For an interpretation of these in terms of the boundary conditions of the Green's function, see \cite{Mfactorization}.
\end{rem}

%この補題は後で見るように、調和関数によって定義される二次微分
%\begin{align*}
%\pa_{H_\phi}:P^p(S^\cl(\bD)) \rightarrow P^{p-2}(S^\cl(\bD))
%\end{align*}
%と関係している。実際、$\Conf(S^2)\cong \mathrm{PSL}_2\C$は$L(-1),L(0),L(1)$で生成されるため、$\phi \in \Conf(S^2)$のとき、$H_\phi=0$となることが、補題の粒子数の保存と対応する。

It is well-known that $M(0)$ admits a unique symmetric bilinear form $(-,-):M(0) \otimes M(0) \rightarrow \C$ such that: (see for example \cite{FHL,Li})
\begin{itemize}
\item
$(\va,\va)=1$.
\item
For any $a,b,c \in M(0)$,
\begin{align}
(Y(a,z)b,c) = (b,Y(e^{zL(1)}(-z^{-2})^{L(0)}a,z^{-1})c)
\label{eq_def_inv}
\end{align}
holds as formal power series.
\end{itemize}
Define an anti-linear map $\theta:M(0) \rightarrow M(0)$ by
\begin{align*}
\theta(h(-k_1)\dots h(-k_n)\va)= (-1)^n h(-k_1)\dots h(-k_n)\va
\end{align*}
for any $k_1 \geq \dots \geq k_n \geq 1$.
Then, $\theta$ is a vertex operator algebra anti-linear automorphism
 which satisfies $\theta^2=\id_{M(0)}$.
Define sesquilinear form $(-,-)_M :M(0) \otimes M(0) \rightarrow \C$ defined by 
$( a, b )_M = (\theta(a),b)$ for $a,b \in M(0)$.
Then, $(-,-)_M$ is positive-definite \cite{DL}.

\begin{rem}\label{rem_positive}
A (full) vertex operator algebra equipped with a nondegenerate invariant bilinear form \eqref{eq_def_inv} and an anti-linear involution $\theta$ such that the associated sesquilinear form is positive definite is called a unitary (full) vertex operator algebra \cite{DL,CKLW,AMT}.
The affine Heisenberg vertex algebra is an example of a unitary vertex operator algebra.
%このような非退化な不変双線形形式\eqref{eq_def_inv}と involution $\theta$を持ち、それによって定義される sesqui-linear form が正定値になるようなVOA はunitary vertex operator algebra と呼ばれる\cite{DL,CKLW}. affine Heisenberg vertex algebra は unitary VOA の一例である。
\end{rem}

%We simply write $M(0)$ for $M(0)_\C$.
Let $\overline{M(0)^p}$ denote the Hilbert space completion of $M(0)^p$ with respect to the inner product $(-,-)_M$.
Note that by \eqref{eq_def_inv}, $M_p(0)$ and $M_q(0)$ are mutually orthogonal if $p \neq q$.
The following proposition can be generalized into any unitary full vertex operator algebra (see  also \cite{AMT}).
\begin{prop}\label{prop_vertex_conv}
Let $v \in M(0)^p$ and $w \in M(0)^q$.
Then, for any $\zeta \in \bD \setminus \{0\}$, the formal power series
\begin{align*}
Y(v,z)w = \sum_{n \in \Z} v(n)w z^{-n-1}
\end{align*}
converges in norm $(-,-)_M$ upon substituting $z=\zeta$, and hence defines an element of the Hilbert space $\overline{M(0)^{p+q}}$, that is,
$\sum_{n \in \Z} \norm{v(n)w}_M^2 |\zeta|^{-2n-2} <\infty$ holds.
\end{prop}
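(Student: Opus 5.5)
The plan is to reduce to monomials, write $Y(v,z)w$ as a finite sum of Wick-type terms, and bound each term in the Fock space norm. I use that distinct $L(0)$-eigenspaces of $M(0)$ are mutually orthogonal for $(-,-)_M$. This follows from \eqref{eq_def_inv} together with $L(0)$ being self-adjoint for $(-,-)_M$, which holds because $\theta$ commutes with $L(0)$.

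By bilinearity it suffices to treat monomials
\begin{align*}
v=h(-a_1-1)\cdots h(-a_p-1)\va,\qquad w=h(-b_1-1)\cdots h(-b_q-1)\va .
\end{align*}
If $v$ has weight $\Delta_v$ and $w$ has weight $\Delta_w$, then $v(n)w\in M(0)_{\Delta_v+\Delta_w-n-1}$. The terms of the series therefore lie in mutually orthogonal finite-dimensional subspaces. Hence norm convergence at $z=\zeta$ is equivalent to $\sum_n\norm{v(n)w}_M^2|\zeta|^{-2n-2}<\infty$, and this is the quantity to bound.

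To compute $Y(v,z)w$, use the standard free field formula
\begin{align*}
Y(v,z)=\frac{1}{a_1!\cdots a_p!}:\pa^{a_1}h(z)\cdots\pa^{a_p}h(z):,\qquad h(z)=h^+(z)+h^-(z).
\end{align*}
In the normal ordered product every $h^+$ stands to the right of every $h^-$. Acting on $w$, each annihilation factor $\pa^{a_i}h^+(z)$ either kills a creation mode $h(-b_j-1)$ of $w$, producing an explicit Laurent monomial in $z$ of the form $c\,z^{-a_i-b_j-2}$, or it passes through and annihilates $\va$. By Wick's theorem, $Y(v,z)w$ is then a finite sum, indexed by partial matchings between the $a_i$ and the $b_j$, of terms
\begin{align*}
c\, z^{-N}\,\Big(\prod_{i\in I}\pa^{a_i}h^-(z)\Big)\, w',
\end{align*}
where $I$ is the set of unmatched indices of $v$ and $w'$ is the monomial $w$ with the matched modes removed. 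This identity holds as formal series in $z$, and at most $p+q$ creation operators occur.

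For each term, evaluate at $z=\zeta$. The factor $\pa^{a}h^-(\zeta)$ is the creation operator $\sum_{k\ge a}\frac{k!}{(k-a)!}\zeta^{k-a}h(-k-1)$. Since $\norm{h(-k-1)\va}_M^2=k+1$, its one-particle vector has squared norm
\begin{align*}
\sum_k \Big(\frac{k!}{(k-a)!}\Big)^2(k+1)|\zeta|^{2(k-a)},
\end{align*}
which is finite because $|\zeta|<1$. I would then invoke the elementary Fock space estimate: for creation operators $a^*(f_i)$ attached to one-particle vectors $f_i$, and $u$ an $s$-particle vector,
\begin{align*}
\norm{a^*(f_1)\cdots a^*(f_r)u}\le \sqrt{(r+s)!/s!}\;\prod_i\norm{f_i}\;\norm{u}.
\end{align*}
This shows that each term, summed over all its modes, converges absolutely in $\overline{M(0)^{p+q}}$. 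A finite sum of absolutely convergent series converges. Orthogonality of the weight spaces then identifies the limit with $\sum_n v(n)w\,\zeta^{-n-1}$ and yields the stated square-summability.

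I expect the main obstacle to be the bookkeeping that matches the formal Laurent series $\sum_n v(n)w z^{-n-1}$ with the sum of the Wick terms evaluated at $\zeta$, and in particular justifying that the operators $\pa^{a}h^-(\zeta)$ extend to bounded maps $\overline{M(0)^{s}}\to\overline{M(0)^{s+1}}$. The Fock estimate above is the tool I would use for that extension. Once it is in place, everything else is a finite sum of norm-convergent pieces.
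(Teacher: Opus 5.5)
Your proof is correct, but it takes a genuinely different route from the paper.

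\textbf{The paper's route.} The paper never expands $Y(v,z)w$. It reduces, via the spanning of $M(0)$ by $L(-1)$-descendants of quasi-primary vectors \cite{DLM}, to $Y(\frac{1}{l!}L(-1)^l v,z)w$ with $v$ quasi-primary. It then uses the invariant form \eqref{eq_def_inv} and $\theta$ to identify the generating series $\sum_n\binom{-n-1}{l}^2\norm{v(n)w}_M^2(z_1z_2)^{-n-l-1}$ with a matrix coefficient of $Y(\theta(v),z_1^{-1})Y(v,z_2)$ between $w$ and $w$. That matrix coefficient is the expansion of an element of $\C[z_1^{\pm},z_2^{\pm},(1-z_1z_2)^{-1}]$, so positivity of the coefficients gives convergence at $z_1=\bar\zeta$, $z_2=\zeta$.

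\textbf{Your route.} You instead use the free-field structure. Wick's theorem writes $Y(v,z)w$ as finitely many terms $c\,z^{-N}\prod_{i\in I}\pa^{a_i}h^-(z)w'$; the paper uses the same expansion only later, in the proof of Theorem~\ref{thm_identification}. The bosonic bound $\norm{a^*(f)u}\le\sqrt{s+1}\,\norm{f}\,\norm{u}$ then controls each term.

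The bookkeeping issue you flag is settled by applying that bound mode by mode, with $f_i=h(-k_i-1)\va$. The multi-index sum is then dominated by a constant times $\prod_i\sum_{k\ge a_i}\frac{k!}{(k-a_i)!}\sqrt{k+1}\,|\zeta|^{k-a_i}<\infty$. So it converges absolutely in norm and can be regrouped by total degree in $z$, which is exactly the coefficientwise match with $\sum_n v(n)w\,\zeta^{-n-1}$.

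\textbf{What each approach buys.} Your argument is elementary and self-contained: it needs neither the rationality of matrix coefficients nor \cite{DLM}. It gives the limit explicitly, and it yields the stronger bound $\sum_n\norm{v(n)w}_M|\zeta|^{-n-1}<\infty$. The paper's argument uses only general features of a unitary VOA: the invariant form, positivity, and rationality of correlation functions. That is why the paper can claim the result for arbitrary unitary full VOAs, whereas your argument is tied to the Heisenberg free field.
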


\begin{proof}
Recall that a vector $v \in M(0)$ is called {\it quasi-primary} of weight $\Delta \in \Z_{\geq 0}$ if $L(1)v=0$ and $L(0)v=\Delta v$.
The underlying vector space of any unitary VOA is spanned by $\{L(-1)^n v\}_{n \geq 0}$, where $v$ ranges over all quasi-primary vectors (see  \cite{DLM}).
By linearity, it suffices to prove the proposition for 
$Y(\frac{1}{l!}L(-1)^l v,z)w$ with $l \geq 0$, $v \in M(0)_\Delta$, and $w \in M(0)_{\Delta'}$ such that $v$ is quasi-primary.
Since
\begin{align*}
Y\left(\frac{1}{l!}L(-1)^l v,z\right)w
= \frac{\pa_z^l}{l!} Y(v,z)w
= \sum_{n \in \Z} \binom{-n-1}{l} v(n)w z^{-n-1-l},
\end{align*}
and $v(n)w \in M(0)_{\Delta+\Delta'-n-1}$,
it suffices to show that
\begin{align*}
\sum_{n \in \Z} \left(\binom{-n-1}{l}\right)^2 
\norm{v(n)w}_M^2 
|\zeta|^{-2n-2l-2} < \infty.
\end{align*}
Let $z_1,z_2$ be formal variables.
By \eqref{eq_def_inv},
\begin{align*}
&\frac{1}{(l!)^2}(Y(L(-1)^l v,z_1)w, Y(L(-1)^l v,z_2)w)_M\\
&=\frac{1}{(l!)^2} \pa_{z_1}^l \pa_{z_2}^l (Y(v,z_1)w, Y(v,z_2)w)_M\\
&=\frac{1}{(l!)^2} \pa_{z_1}^l \pa_{z_2}^l 
(w,Y( e^{L(1)z_1}(-z_1^{-2})^{L(0)} \theta(v),z_1^{-1}) Y(v,z_2)w)_M\\
&=\frac{1}{(l!)^2} \pa_{z_1}^l \pa_{z_2}^l 
(-z_1^{-2})^{\Delta}
(w,Y(\theta(v),z_1^{-1}) Y(v,z_2)w)_M\\
&=\frac{1}{l!}(-1)^{\Delta}
\sum_{n \in \Z}\binom{-n-1}{l}
\pa_{z_1}^l  z_1^{-2\Delta} z_2^{-n-l-1}
(w,Y(\theta(v),z_1^{-1}) v(n)w)_M\\
&=(-1)^{\Delta}
\sum_{n \in \Z}\left(\binom{-n-1}{l}\right)^2
z_1^{-n-l-1} z_2^{-n-l-1}
(w,\theta(v)(2\Delta-n-2)v(n)w)_M.
\end{align*}
It is a well-known fact that this formal power series coincides with the expansion of a rational polynomial in $\C[z_1^\pm,z_2^\pm,(1-z_1z_2)^{-1}]$ and converges absolutely in the region $|z_1|^{-1}>|z_2|$ (see, for example, \cite{FB,LL}).
Taking into account the anti-linearity in the first variable of $(-,-)_M$, we obtain
\begin{align*}
&\frac{1}{(l!)^2}(Y(L(-1)^l v,z_1)w, Y(L(-1)^l v,z_2)w)_M\Bigl|_{z_1=z_2=\zeta}\\
&=(-1)^{\Delta}
\sum_{n \in \Z}\left(\binom{-n-1}{l}\right)^2
z_1^{-n-l-1} z_2^{-n-l-1}
(w,\theta(v)(2\Delta-n-2)v(n)w)_M \Bigl|_{z_1=\overline{\zeta},z_2=\zeta}\\
&=(-1)^{\Delta}
\sum_{n \in \Z}\left(\binom{-n-1}{l}\right)^2
|\zeta|^{-2n-2l-2}
(w,\theta(v)(2\Delta-n-2)v(n)w)_M,
\end{align*}
which converges absolutely since $|\zeta|^{-1}>1>|\zeta|$.
Comparing coefficients, we obtain
\begin{align*}
\norm{v(n)w}_M^2
= (-1)^{\Delta}(w,\theta(v)(2\Delta-n-2)v(n)w)_M,
\end{align*}
and the proposition follows.
\end{proof}

\subsection{Relation between $\CE$-algebra and affine Heisenberg vertex algebra}\label{sec_dictionary}
In this section, we discuss the relationship between $M(0)$ and the $\CEh$-algebra $\Sym A^2(\bD)$.

Note that by an easy computation, we have
\begin{align*}
(h(-k-1)^n \va,h(-k-1)^n \va)_M= n! (k+1)^n 
\end{align*}
for any $k \geq 0$ and $n>0$.
In particular, $\norm{h(-k-1)\va}_M= \sqrt{k+1}$, and we would like to identify this vector with $\sqrt{k+1}e_k(z) = (k+1)z^k \in A^2(\bD)$.
Set
\begin{align*}
h_j(z) = \sqrt{j+1}e_j(z) \in A^2(\bD).
\end{align*}

Motivated by this observation, for any $p \geq 0$, define a linear map
\begin{align*}
\Psi: M_p(0) \rightarrow \Sym^p(A^2(\bD))
\end{align*}
by, for any $n \geq 1$, $p= k_1+\dots+k_n$, and $i_1>i_2>\dots>i_n \geq 0$,
\begin{align*}
\Psi(h(-i_1-1)^{k_1}\dots h(-i_n-1)^{k_n})
= \sqrt{p!}\,\bs^p\bigl((h_{i_1}(z))^{\otimes k_1}\otimes \cdots \otimes (h_{i_n}(z))^{\otimes k_n}\bigr).
\end{align*}
Here $\bs^p$ denotes the symmetrization operator (the orthogonal projection \eqref{eq_symetrizer})
$A^2(\bD)^{\hotimes p} \rightarrow \Sym^p A^2(\bD)$.
Taking the direct sum over $p$, we obtain a linear map
\begin{align*}
\Psi: M(0) \rightarrow \Sym(A^2(\bD)).
\end{align*}

%
%この章では$M(0)$と$\CE$代数$\Sym A^2(\bD)$の関係を議論する。
%
%Note that by an easy computation, we have
%\begin{align*}
%(h(-k-1)^n \va,h(-k-1)^n \va)= n! (k+1)^n 
%\end{align*}
%for any $k \geq 0$ and $n>0$. とくに$\norm{h(-k-1)\va}= \sqrt{k+1}$であり、我々はこのベクトルを$\sqrt{k+1}e_k(z) = (k+1)z^k \in A^2(\bD)$と同定したい。そこでSet
%\begin{align*}
%h_j(z) = \sqrt{j+1}e_j(z) \in A^2(\bD).
%\end{align*}
%このような観察に動機付けられて for any $p \geq 0$, define a linear map
%\begin{align*}
%\Psi: M_p(0) \rightarrow \mathrm{Sym}^p(A^2(\bD))
%\end{align*}
%by, for any $n \geq 1$, $p= k_1+\dots+k_n$ and $i_1>i_2>\dots>i_n \geq 0$,
%\begin{align*}
%\Psi(h(-i_1-1)^{k_1}\dots h(-i_n-1)^{k_n}) = \sqrt{p!}\bs^p((h_{i_1}(z))^{\otimes k_1}\otimes \cdots \otimes (h_{i_n}(z))^{\otimes k_n} ).
%\end{align*}
%ここで$\bs^p$は対称化作用素(直行射影 \eqref{eq_symetrizer}) $A^2(\bD)^{\hotimes p} \rightarrow \Sym^p A^2(\bD)$.
%これらの直和により$\Psi: M(0) \rightarrow \mathrm{Sym}(A^2(\bD))$を定義する。
\begin{lem}\label{lem_Psi}
The linear map $\Psi:M(0) \rightarrow \mathrm{Sym}(A^2(\bD))$ is isometric with respect to $(-,-)_M$ and $(-,-)_A$.
%Moreover, the image of the map $\Psi$ は$\CE$代数の $\mathrm{Sym}(A^2(\bD))$ の algebraic core である。
\end{lem}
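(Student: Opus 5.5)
The plan is to compare Gram matrices on explicit orthogonal bases on both sides. Let $(-,-)_A$ be the inner product on $\Sym A^2(\bD)$ induced from the Hilbert space tensor products, with different $\Sym^p$ mutually orthogonal. First, $M_p(0)\perp M_q(0)$ for $p\neq q$ (noted after Remark \ref{rem_positive}), and $\Psi(M_p(0))\subset \Sym^p A^2(\bD)$. So it suffices to prove isometry on each $M_p(0)$ separately.

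On $M_p(0)$, I will use the PBW-type basis $v_{\mathbf{k}}=h(-i_1-1)^{k_1}\cdots h(-i_n-1)^{k_n}\va$ with $i_1>\dots>i_n\geq 0$ and $\sum k_j=p$. I will show that these vectors are mutually orthogonal for $(-,-)_M$ and compute their norms.

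\emph{Step 1: orthogonality and norms in $M(0)$.} By \eqref{eq_def_inv} applied to $a=h(-1)\va$ (weight $1$, quasi-primary) together with $\theta$, the adjoint of $h(n)$ with respect to $(-,-)_M$ is $h(-n)$. This holds up to a sign, which is fixed so that the form is positive, i.e. $h(n)^\dagger=h(-n)$. Given this, the standard Fock space computation with $[h(m),h(-m)]=m$ yields
\begin{align*}
(v_{\mathbf{k}},v_{\mathbf{k}'})_M=\delta_{\mathbf{k},\mathbf{k}'}\prod_{j=1}^n k_j!\,(i_j+1)^{k_j}.
\end{align*}
To get this, commute annihilators to the right, where they kill $\va$, and induct on $p$. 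This is consistent with the paper's remark $(h(-k-1)^n\va,h(-k-1)^n\va)_M=n!(k+1)^n$.

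\emph{Step 2: the same computation in $\Sym^p A^2(\bD)$.} The $h_i=(i+1)z^i$ are mutually orthogonal with $\norm{h_i}^2=i+1$. Write $u=h_{i_1}^{\otimes k_1}\otimes\cdots\otimes h_{i_n}^{\otimes k_n}$. Since $\bs^p$ is an orthogonal projection, $(\bs^p u,\bs^p u')=(u,\bs^p u')=\frac{1}{p!}\sum_{\sigma}(u,\sigma u')$. This vanishes unless $\mathbf{k}=\mathbf{k}'$. When they are equal, exactly $\prod_j k_j!$ permutations fix $u$, and each such term contributes $\prod_j (i_j+1)^{k_j}$. Hence $\norm{\sqrt{p!}\,\bs^p u}^2=\prod_j k_j!(i_j+1)^{k_j}$, which matches Step 1.

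Comparing the Gram matrices on these bases then gives the isometry, and injectivity follows.

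The only step I expect to be delicate is Step 1, specifically extracting from \eqref{eq_def_inv} and the sign convention of $\theta$ that $h(n)^\dagger=h(-n)$ for $(-,-)_M$. The rest is bookkeeping.
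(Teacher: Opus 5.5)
Your proposal is correct and follows essentially the same route as the paper. Both arguments compute the $(-,-)_M$ Gram matrix of the PBW monomials using $h(n)^\dagger=h(-n)$, obtaining $\prod_j k_j!(i_j+1)^{k_j}$, and match it with the norms of $\sqrt{p!}\,\bs^p\bigl(h_{i_1}^{\otimes k_1}\otimes\cdots\otimes h_{i_n}^{\otimes k_n}\bigr)$. The sign you flag is not actually delicate: \eqref{eq_def_inv} with the weight-one quasi-primary $a=h(-1)\va$ gives $(h(n)b,c)=-(b,h(-n)c)$, and since $\theta$ is an automorphism with $\theta(h(-1)\va)=-h(-1)\va$, this minus sign cancels, giving $h(n)^\dagger=h(-n)$ directly without appealing to positivity.
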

\begin{proof}
Let $n,m \geq 1$, $p= k_1+\dots+k_n = l_1+\dots+l_m$ and $i_1>i_2>\dots>i_n \geq 0$,  $j_1>j_2>\dots>j_m \geq 0$.
By \eqref{eq_def_inv}, we have
\begin{align*}
( &h(-i_1-1)^{k_1}\dots h(-i_n-1)^{k_n}\va,h(-j_1-1)^{l_1}\dots h(-j_m-1)^{l_m}\va)_M\\
 &=
( \va, h(i_1+1)^{k_1}\dots h(i_n+1)^{k_n}h(-j_1-1)^{l_1}\dots h(-j_m-1)^{l_m}\va )_M\\
&= \begin{cases}
k_1! (i_1+1)^{k_1} k_2!(i_2+1)^{k_2} \dots k_n! (i_n+1)^{k_n},
 & (n=m, i_s=j_s, k_s=l_s \text{ for all }s=1,\dots,n) \\
0, & \text{(otherwise)}.
\end{cases}
\end{align*}
Noting that $(e_i(z),e_j(z))_A = \delta_{i,j}$, we have
\begin{align*}
\norm{\bs(e_{i_1}(z)^{\otimes k_1}\otimes \cdots \otimes e_{i_n}(z)^{\otimes k_n} )}_A^2
&=\frac{k_1!k_2!\dots k_n!}{p!}
\end{align*}
Hence, $\Psi$ is an isometry.
\end{proof}
In particular, the isometry $\Psi$ induces isometric isomorphisms of Hilbert spaces
\begin{align*}
\overline{M_p(0)} \overset{\cong}{\rightarrow} \Sym^p A^2(\bD)
\quad\text{ and }\quad
\overline{M(0)^p} \overset{\cong}{\rightarrow} \Hf^p.
\end{align*}
By Proposition \ref{prop_vertex_conv}, for any $\zeta \in \bD$ and $v \in M(0)^p$, we have
$Y(v,z)\va \Bigl|_{z=\zeta} \in \overline{M(0)^p}$.
%とくにThe isometry $\Psi$はヒルベルト空間の間の isometric isomorphism
%\begin{align*}
%\overline{M_p(0)} \overset{\cong}{\rightarrow} \Sym^p A^2(\bD) \quad\text{ and }\quad
%\overline{M(0)^p} \overset{\cong}{\rightarrow} \Hf^p
%\end{align*}
%を誘導する。
%Proposition \ref{prop_vertex_conv}より任意の$\zeta \in \bD$と$v\in M(0)^p$に対して、$Y(v,z)\va \Bigl|_{z=\zeta} \in \overline{M(0)^p}$が成り立つ。
\begin{lem}\label{lem_vertex_kernel}
For any $\zeta \in \bD$ and $n \geq 0$,
\begin{align*}
\Psi(Y(h(-n-1)\va,z) \va |_{z=\zeta}) = \Psi\left(\frac{1}{n!} \pa_z^n h^-(z)\va |_{z=\zeta}\right) = \frac{1}{n!}\pa_{\zeta}^n E_{\zeta}(z) \in A^2(\bD).
\end{align*}
\end{lem}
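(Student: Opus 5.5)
The plan has three steps: reduce the vertex operator to a power series in creation modes, apply $\Psi$ termwise using isometry and continuity, and recognize the resulting series as the Taylor expansion of the derivative of the reproducing kernel.

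\textbf{Reduction to creation modes.} We use the standard vertex algebra identity $Y(h(-n-1)\va,z)=\frac{1}{n!}\pa_z^n Y(h(-1)\va,z)$. This follows from $h(-n-1)\va=\frac{1}{n!}L(-1)^n h(-1)\va$ together with the $L(-1)$-derivative property. Applied to $\va$, every annihilation mode $h(k)$ with $k\geq 0$ kills $\va$, so $Y(h(-1)\va,z)\va=h^-(z)\va=\sum_{m\ge0}h(-m-1)\va\, z^m$. Differentiating gives
\begin{align*}
\frac{1}{n!}\pa_z^n h^-(z)\va=\sum_{k\ge 0}\binom{n+k}{n} h(-n-k-1)\va\, z^{k},
\end{align*}
which proves the first equality as an identity of formal series.

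\textbf{Substitution and applying $\Psi$.} For $\zeta\in\bD$ this series converges in $\overline{M(0)^1}$. One can see this directly: $\norm{h(-m-1)\va}_M=\sqrt{m+1}$, so the squared norms of the terms are $\binom{n+k}{n}^2(n+k+1)|\zeta|^{2k}$, which is summable for $|\zeta|<1$. Alternatively, Proposition~\ref{prop_vertex_conv} gives the convergence. By Lemma~\ref{lem_Psi}, $\Psi$ extends to an isometry $\overline{M(0)^1}\to\Hf^1$, and $\Psi(h(-m-1)\va)=h_m(z)=(m+1)z^m$ because $p=1$ and $\sqrt{1!}\,\bs^1=\id$. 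Continuity of $\Psi$ lets us apply it termwise:
\begin{align*}
\Psi\Bigl(\tfrac{1}{n!}\pa_z^n h^-(z)\va|_{z=\zeta}\Bigr)=\sum_{k\ge0}\binom{n+k}{n}(n+k+1)\,\zeta^k z^{n+k}\in A^2(\bD).
\end{align*}

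\textbf{Matching with the kernel.} From the proof of Lemma~\ref{lem_kernel_derivative},
\begin{align*}
\pa_\zeta^n E_\zeta(z)=\sum_{k\ge0}\frac{(n+k+1)!}{k!}\zeta^k z^{n+k}.
\end{align*}
Dividing by $n!$ and using $\frac{(n+k+1)!}{k!\,n!}=\binom{n+k}{n}(n+k+1)$, the coefficients agree with the previous display.

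The only point requiring care is the middle step. We must check that the series converges in norm, not merely as a formal series, so that applying the bounded extension of $\Psi$ termwise is legitimate. The explicit norm estimate above settles this.
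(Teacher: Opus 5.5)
Your proof is correct and follows essentially the paper's route. The only difference is how you get the creation-mode expansion: the paper writes $Y(L(-1)^n h,z)\va = L(-1)^n e^{zL(-1)}h(-1)\va$ and uses $L(-1)^{k+n}h(-1)\va=(k+n)!\,h(-k-n-1)\va$, whereas you differentiate $h^-(z)\va$; from there both arguments apply $\Psi$ termwise and match the Taylor expansion of $\pa_\zeta^n E_\zeta(z)$. Your explicit norm-convergence check fills in a step the paper leaves implicit. Summability of the squared norms suffices because the terms have distinct $L(0)$-weights and are therefore orthogonal; alternatively, the norms themselves are summable.
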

\begin{proof}
Since
\begin{align*}
Y(L(-1)^n h(-1)\va,z) \va |_{z=\zeta}= L(-1)^n \exp(\zeta L(-1))h(-1)\va = \sum_{k \geq 0} \frac{\zeta^{k}}{k!} L(-1)^{k+n}h(-1)\va
\end{align*}
and $L(-1)^{k+n}h(-1)\va = (k+n)!h(-k-n-1)\va$, 
\begin{align*}
\Psi(Y(L(-1)^n h,z) \va |_{z=\zeta}) = 
\sum_{k \geq 0} 
\frac{(k+n+1)!}{k!}z^{k+n}\zeta^{k}=\pa_\zeta^n E_\zeta(z).
\end{align*}
%Hence, the assertion follows from $h(-n-1)\va = \frac{1}{n!}L(-1)^n h$.
\end{proof}

Let $\zeta \in \bD$ with $\zeta \neq 0$. Then, there is $r,s>0$ such that
\begin{align}
\overline{B_r(\zeta)} \cap \overline{B_s(0)} =\emptyset\qquad \text{and}\qquad B_r(\zeta),B_s(0) \subset \bD.
\label{eq_sep_z}
\end{align}
Then, $(B_{\zeta,r}, B_{0,s}) \in \CE(2)$ by Proposition \ref{prop_contraction_trans}.
%\eqref{eq_def_B}.
\begin{thm}\label{thm_identification}
Let $\zeta \in \bD$ with $\zeta \neq 0$
and $1>r,s >0$ satisfy \eqref{eq_sep_z}.
Then, for the normalized $\CEh$-algebra \eqref{eq_sqrt}, for any $v_1,v_2 \in M(0)$,
\begin{align}
\m_{(B_{\zeta,r}, B_{0,s})}^\cN(\Psi(v_1),\Psi(v_2))= \Psi(Y(r^{L(0)} v_1,z)s^{L(0)} v_2 |_{z=\zeta})
\label{eq_comparison_thm}
\end{align}
as elements in $\Sym \,A^2(\bD)$.
\end{thm}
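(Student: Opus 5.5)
The plan is to reduce \eqref{eq_comparison_thm} to a comparison of Wick expansions on both sides. Both sides are bilinear, so it suffices to take monomials
\[
v_1=h(-i_1-1)\cdots h(-i_p-1)\va,\qquad v_2=h(-j_1-1)\cdots h(-j_q-1)\va .
\]

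\textbf{Step 1: the normalization.} Since $\Psi$ of a monomial equals $\sqrt{p!}\,\bs^p(h_{i_1}\otimes\cdots\otimes h_{i_p})$, we have
\[
\Psi(v_1)=\cN\bigl(\bs^p(h_{i_1}\otimes\cdots\otimes h_{i_p})\bigr).
\]
Hence the $\cN^{-1}$ in \eqref{eq_sqrt} strips the factors $\sqrt{p!}$ and $\sqrt{q!}$. The outer $\cN$, composed with $\bs$, is exactly $\Psi$ applied to the commutative product of creation operators. Concretely, for $v=h(-a_1-1)\cdots h(-a_k-1)\va$ one has $\cN\bs(h_{a_1}\otimes\cdots\otimes h_{a_k})=\Psi(v)$, and this extends by continuity to infinite convergent sums inside each $\Hf^k$. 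Since $B_{\zeta,r}$ and $B_{0,s}$ are affine, Proposition~\ref{prop_cocycle_vanish} gives $\pa_{B}=0$ and $\rho_1=\rho_\cl$. The left side therefore becomes
\[
\Psi\circ\bigl(\text{commutative product}\bigr)\circ(\rho_\cl(B_{\zeta,r})\otimes\rho_\cl(B_{0,s}))\circ\exp(\hat C_{B_{\zeta,r},B_{0,s}})
\]
applied to $(h_{i_1}\otimes\cdots)\otimes(h_{j_1}\otimes\cdots)$.

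\textbf{Step 2: match the single-particle data.} First, Lemma~\ref{lem_translation_basis} gives
\[
\rho_\cl(B_{\zeta,r})h_k=\tfrac{r^{k+1}}{k!}\pa_\zeta^kE_\zeta ,
\]
which by Lemma~\ref{lem_vertex_kernel} equals $\Psi$ of the creation part $\tfrac{1}{k!}\pa_z^k h^-(z)\va|_{z=\zeta}$ of the field $Y(r^{L(0)}h(-k-1)\va,z)$; here $r^{L(0)}$ contributes the factor $r^{k+1}$. Second, $\rho_\cl(B_{0,s})h_k=s^{k+1}h_k$, which matches $s^{L(0)}$. Third, Proposition~\ref{prop_contraction_trans} gives
\[
C(h_n,h_m)=\frac{r^{n+1}s^{m+1}}{n!\,m!}\,\pa_z^n\pa_w^m(z-w)^{-2}\Big|_{z-w=\zeta}.
\]
I will check by a direct commutator computation that this is exactly the scalar $\bigl[\tfrac{1}{n!}\pa_z^nh^+(z),\,h(-m-1)\bigr]|_{z=\zeta}$, rescaled by $r^{n+1}s^{m+1}$. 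This uses $[h(k),h(-m-1)]=k\,\delta_{k,m+1}$ and the expansion of $\pa^n\pa^m(z-w)^{-2}$ at $w=0$.

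\textbf{Step 3: Wick's theorem on the VOA side.} For the free boson one has
\[
Y(v_1,z)=\,:\!\prod_a \tfrac{1}{i_a!}\pa_z^{i_a}h(z)\!:\,.
\]
Applying it to $s^{L(0)}v_2$ and moving the annihilation parts $h^+$ to the right, $Y(r^{L(0)}v_1,z)s^{L(0)}v_2$ becomes a sum over partial matchings between the $p$ factors of $v_1$ and the $q$ factors of $v_2$. Each matched pair contributes the contraction scalar from Step 2. Each unmatched factor of $v_1$ contributes a creation field $\tfrac{1}{i!}\pa^ih^-(\zeta)$, and each unmatched factor of $v_2$ contributes $s^{j+1}h(-j-1)$. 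On the other side, $\exp(\hat C)$ expands as $\sum_k\tfrac{1}{k!}\hat C^k$, and $\tfrac{1}{k!}\hat C^k$ is precisely the sum over matchings of size $k$, with the $k!$ cancelling the orderings. Comparing term by term, using Step 2 and the fact that $\Psi$ turns commutative products of creation operators into $\cN\bs$, gives the identity.

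\textbf{Main obstacle.} I expect the hard part to be analytic rather than combinatorial. The VOA side is an infinite series, since each $h^-(\zeta)$ is an infinite sum. I will justify interchanging $\Psi$ with the product of these series in three steps: Proposition~\ref{prop_vertex_conv} gives norm convergence in $\overline{M(0)^{p+q}}$; $\Psi$ extends to an isometry of the completions; and $\bs$ together with the tensor product are bounded on each $\Hf^k$, so the product of norm-convergent expansions of the $E$-kernels converges to the corresponding symmetrized tensor. The remaining combinatorics is just the matching count, which I expect to be routine.
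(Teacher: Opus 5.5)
Your proposal is correct and is essentially the paper's proof: both sides are expanded over partial matchings (the paper's subsets $S\subset[n]$ with injections $T\colon S\hookrightarrow[m]$), the single-particle data are matched via Lemmas~\ref{lem_translation_basis}, \ref{lem_vertex_kernel} and \ref{lem_dilation}, the contraction scalars are matched via Lemma~\ref{lem_two_point}, and the $1/k!$ in $\exp(\hat C)$ cancels the $k!$ orderings. Your convergence argument (extending $\Psi$ to the completions and using boundedness of $\bs$ and of tensor products on each $\Hf^k$) makes rigorous a step the paper leaves implicit. One further point should be stated explicitly, as the paper does: the inner symmetrizers $\bs^p,\bs^q$ on the inputs can be dropped, because the composite $\bs\circ(\rho_\cl\otimes\rho_\cl)\circ\exp(\hat C)$ is symmetric within each group of factors.
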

The following Lemma follows immediately from Proposition \ref{prop_contraction_trans}:
%Proposition \ref{prop_contraction_trans}より直ちに次が分かる:
\begin{lem}\label{lem_two_point}
For any $n,m \geq 0$,
\begin{align*}
\hat{C}_{(B_{\zeta,r}, B_{0,s})}(h_n(z), h_m(z)) = (-1)^n r^{n+1}s^{m+1}\frac{(n+m+1)!}{n!m!} \zeta^{-n-m-2}.
\end{align*}
\end{lem}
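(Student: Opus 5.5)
The plan is to reduce the lemma to Proposition~\ref{prop_contraction_trans} and then compute two derivatives explicitly.

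First, I note that for one-particle vectors $h_n, h_m \in \Sym^1 A^2(\bD) = A^2(\bD)$ the operator $\hat{C}_{(B_{\zeta,r},B_{0,s})}$ has only the single term $i=j=1$ in its defining sum. Its target $\Sym^0\otimes\Sym^0$ is identified with $\C\va\otimes\C\va\cong\C$. Hence
\begin{align*}
\hat{C}_{(B_{\zeta,r},B_{0,s})}(h_n,h_m)=C_{(B_{\zeta,r},B_{0,s})}(h_n\otimes h_m).
\end{align*}
By hypothesis \eqref{eq_sep_z}, $\overline{B_r(\zeta)}\cap\overline{B_s(0)}=\emptyset$, so Proposition~\ref{prop_contraction_trans} applies with $a=\zeta$ and $b=0$. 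Since $h_n=\sqrt{n+1}\,e_n$, it gives
\begin{align*}
C_{(B_{\zeta,r},B_{0,s})}(h_n\otimes h_m)=\frac{r^{n+1}}{n!}\frac{s^{m+1}}{m!}\,\pa_z^n\pa_w^m\frac{1}{(z-w)^2}\Bigl|_{z-w=\zeta}.
\end{align*}

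Second, I compute the derivatives. Differentiating in $z$ gives
\begin{align*}
\pa_z^n(z-w)^{-2}=(-1)^n(n+1)!\,(z-w)^{-n-2}.
\end{align*}
Each $w$-derivative of $(z-w)^{-k}$ produces $+k\,(z-w)^{-k-1}$, with no sign change. Applying $\pa_w^m$ therefore contributes the factor $(n+2)(n+3)\cdots(n+m+1)$. Altogether
\begin{align*}
\pa_z^n\pa_w^m(z-w)^{-2}=(-1)^n(n+m+1)!\,(z-w)^{-n-m-2}.
\end{align*}
Evaluating at $z-w=\zeta$ and multiplying by the prefactor $\frac{r^{n+1}s^{m+1}}{n!\,m!}$ yields the claimed formula.

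As a cross-check, I would verify the answer against Proposition~\ref{prop_G_dense}. There the same number is the Taylor coefficient $g_{n,m}$ of $G(z,w)=rs\,(rz+\zeta-sw)^{-2}$ from \eqref{eq_h_trans}. Expanding around $z=w=0$ gives exactly $r^{n+1}s^{m+1}\frac{1}{n!m!}\pa_z^n\pa_w^m(z-w)^{-2}|_{z-w=\zeta}$, since $r^n s^m$ come out of the chain rule. The only point I expect to need care is the bookkeeping of signs and factorials; there is no analytic obstacle, because Proposition~\ref{prop_contraction_trans} already guarantees boundedness under the separation hypothesis.
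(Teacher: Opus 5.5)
Your proof is correct and follows the paper's route. The paper states that the lemma follows immediately from Proposition~\ref{prop_contraction_trans}. Your computation $\pa_z^n\pa_w^m(z-w)^{-2}=(-1)^n(n+m+1)!\,(z-w)^{-n-m-2}$, together with your Taylor-coefficient cross-check via $g_{n,m}$, is exactly the bookkeeping that this reduction leaves to the reader.
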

%\begin{proof}
%Since 
%\begin{align*}
%h_{(T_\zeta r^D, s^D)}(z,w) &= \frac{rs}{(\zeta+rz-sw)^2} = rs\sum_{n,m \geq 0}c_{n,m}(rz)^n (sw)^m\\
%& = rs\sum_{n,m \geq 0} \frac{1}{\sqrt{(n+1)(m+1)}}c_{n,m} r^n e_n(z) s^m e_m(w),
%\end{align*}
%%By Lemma \ref{lem_conjugate_cor}, 
%we have
%\begin{align*}
%r^{-n-1}s^{-m-1}&\hat{C}_{(T_\zeta r^D, s^D)}(\sqrt{n+1}e_n(z), \sqrt{m+1}e_m(z)) 
% =c_{n,m}\\
% &= \frac{\pa_z^n}{n!}\frac{\pa_w^m}{m!} \frac{1}{(\zeta+z-w)^2}\Biggl|_{z=w=0}=(-1)^n \frac{(n+m+1)!}{n!m!}\zeta^{-n-m-2}.
%\end{align*}
%\end{proof}

\begin{proof}[proof of Theorem \ref{thm_identification}]
Let $n,m \geq 1$ and $i_1 \geq \dots i_n \geq 0$ and $j_1 \geq \dots \geq j_m \geq 0$.
Set
\begin{align*}
\Delta = (i_1+1) +\dots+ (i_n+1) \qquad\text{and}\qquad \Delta' = (j_1+1) +\dots+ (j_m+1)
\end{align*}
and $[n]=\{1,2,\dots,n\}$, and for a subset $S \subset [n]$ denote its complement $[n] \setminus S$ by $S'$.
Set
\begin{align*}
C_z^{i,j} = \left[\frac{\pa_z^i}{i!} h^+(z), h(-j-1)\right] =(-1)^i \frac{(i+j+1)!}{i! j!} z^{-i-j-2}.
\end{align*}
%It is important to note that $C_z^{i,j}|_\zeta$ coincides with Lemma \ref{lem_two_point}.
Since $h(-s-1)\va = \frac{1}{s!} L(-1)^{s} h$, it follows from the definition of $M(0)$ that
\begin{align*}
&Y(r^{L(0)}h(-i_1-1) \cdots h(-i_n-1) \va,z) s^{L(0)}h(-j_1-1)\cdots h(-j_m-1)\va\\
&=r^{\Delta}s^{\Delta'}\sum_{S \subset [n]}\prod_{p \in S'}\left(\frac{\pa_z^{i_p}}{i_p!} h^-(z)\right) \prod_{s\in S} 
\left(\frac{\pa_z^{i_s}}{i_s!} h^+(z)\right)h(-j_1-1)\cdots h(-j_m-1)\va\\
&=r^{\Delta}s^{\Delta'}\sum_{S \subset [n]}\sum_{T:S \hookrightarrow [m]}
\left(\prod_{s \in S} C_z^{i_s,j_{T(s)}} \right)
\left(\prod_{p \in S'}\frac{\pa_z^{i_p}}{i_p!} h^-(z)\right) \left(\prod_{q \in [m] \setminus T(S)} h(-j_q-1) \right)\va.
\end{align*}
Here $T:S \hookrightarrow [m]$ ranges over all injections from $S$ to $[m]$.
Moreover, $S$ ranges over subsets with $\#S \leq m$; otherwise the second expression vanishes.
By Lemma \ref{lem_vertex_kernel},
\begin{align*}
&\Psi(Y(r^{L(0)} h(-i_1-1) \cdots h(-i_n-1) \va,z)s^{L(0)} h(-j_1-1)\cdots h(-j_m-1)\va|_{z=\zeta})\\
&=r^{\Delta}s^{\Delta'}\sum_{S \subset [n]}\sum_{T:S \hookrightarrow [m]}
\left(\prod_{s \in S} C_\zeta^{i_s,j_{T(s)}} \right)
\sqrt{(n+m-2\#S)!}\bs^{n+m-2\#S}\left(\bigotimes_{p \in S'}\frac{\pa_z^{i_p}}{i_p!} E_\zeta(z) \otimes \bigotimes_{q \in [m] \setminus T(S)} 
h_{j_q}(z) \right).
\end{align*}

On the other hand, using Proposition~\ref{prop_cocycle_vanish}, we compute
\begin{align*}
&\m_{(B_{\zeta,r}, B_{0,s})}^\cN (\Psi(h(-i_1-1) \cdots h(-i_n-1) \va),\Psi(h(-j_1-1)\cdots h(-j_n-1)\va))\\
&=\m_{(B_{\zeta,r}, B_{0,s})}^\cN(\sqrt{n!}\bs^n(h_{i_1}(z) \otimes \cdots \otimes h_{i_n}(z) ),
\sqrt{m!}\bs^m(h_{j_1}(z) \otimes \cdots \otimes h_{j_m}(z) )\\
&=\sqrt{n!m!}\cN \m_{(B_{\zeta,r}, B_{0,s})}(\cN^{-1}
\bs^n(h_{i_1}(z) \otimes \cdots \otimes h_{i_n}(z) ),\cN^{-1}\bs^m(h_{j_1}(z) \otimes \cdots \otimes h_{j_m}(z) ))\\
&=\cN \m_{(B_{\zeta,r}, B_{0,s})}(\bs^n(h_{i_1}(z) \otimes \cdots \otimes h_{i_n}(z) ),\bs^m(h_{j_1}(z) \otimes \cdots \otimes h_{j_m}(z) ))\\
&=\cN \bs
\left(\rho_\cl(B_{\zeta,r})\otimes \rho_\cl(B_{0,s})\right)
 \exp( \hat{C}_{(B_{\zeta,r}, B_{0,s})})(\bs^n(h_{i_1}(z) \otimes \cdots \otimes h_{i_n}(z) ),\bs^m(h_{j_1}(z) \otimes \cdots \otimes h_{j_m}(z) ))\\
&=\sum_{k=0}^{\mathrm{min} \{n,m\}} \frac{\sqrt{(n+m-2k)!}}{k!} \bs^{n+m-2k}\left(\rho_\cl(B_{\zeta,r})\otimes \rho_\cl(B_{0,s})\right)\\
&\times
\hat{C}_{(B_{\zeta,r}, B_{0,s})}^k (\bs^n(h_{i_1}(z) \otimes \cdots \otimes h_{i_n}(z) ),\bs^m(h_{j_1}(z) \otimes \cdots \otimes h_{j_m}(z) ))\\
&=\sum_{k=0}^{\mathrm{min} \{n,m\}} \frac{\sqrt{(n+m-2k)!}}{k!} \bs^{n+m-2k}\left(\rho_\cl(B_{\zeta,r})\otimes \rho_\cl(B_{0,s})\right)\\
&\times \hat{C}_{(B_{\zeta,r}, B_{0,s})}^k (h_{i_1}(z) \otimes \cdots \otimes h_{i_n}(z),h_{j_1}(z) \otimes \cdots \otimes h_{j_m}(z))
\end{align*}
By Lemma \ref{lem_translation_basis}, Lemma \ref{lem_vertex_kernel}, and Lemma \ref{lem_dilation}, we have
\begin{align*}
\rho_1(B_{\zeta,r})(h_i(z)) &=  \frac{r^{i+1}}{i!}\pa_\zeta^i E_\zeta(z),\\
\rho_1(B_{0,s})(h_j(z)) &= s^{j+1} h_j(z).
\end{align*}
Since there are $k!$ contractions in $\hat{C}^k$ specified by $S$ and $T$, the claim follows from Lemma \ref{lem_two_point}.

\end{proof}

\begin{cor}\label{cor_simple}
The $\CE$-algebra $\Sym A^2(\bD)$ is simple.
\end{cor}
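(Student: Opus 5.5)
Let $I\subset \Sym A^2(\bD)$ be a nonzero closed ideal of the $\CE$-algebra. Here ``closed'' means that $I\cap \Hf^k$ is closed in $\Hf^k$ for every $k$, and ``ideal'' means that $\m_{g_1,\dots,g_n}(a_1,\dots,a_n)\in I$ whenever some $a_i\in I$. The aim is to show $I=\Sym A^2(\bD)$. By the ideal property it suffices to show $\va\in I$: given $\va \in I$ and any $v\in \Sym A^2(\bD)$, the operation $\m_{(B_{0,s},B_{\zeta,r})}(\va,v)$ lies in $I$. Taking $r\to 1$ with the first disk shrinking, or better using the unary operations $\rho_1(\phi)$ together with density, gives everything.

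The plan has three steps.

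\textbf{Step 1: reduce to a vector in $I$ with nonzero vacuum component.} Take $0\neq x\in I$. Apply the dilation $\rho_1(B_{0,r})=\rho_\cl(B_{0,r})$, which lies in $I$ since unary operations preserve ideals. In the orthonormal eigenbasis of Lemma \ref{lem_dilation}, and using Theorem \ref{thm_identification}, this is the operator $r^{L(0)}$ on the Heisenberg side, with eigenvalues $r^{\mathrm{weight}}$. Pass to the lowest-weight component via $r^{-w_0}\rho(B_{0,r})x\to x_{w_0}$ as $r\to 0$. This convergence holds in $\Hf^k$, and $I\cap\Hf^k$ is closed, so $x_{w_0}\in I$. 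Hence $I$ contains a nonzero homogeneous vector $v$ of finite weight, and via $\Psi$ we may regard $v$ as an element of $M(0)$.

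\textbf{Step 2: lower the particle number to reach $\va$.} By Theorem \ref{thm_identification},
\[
\m^\cN_{(B_{\zeta,r},B_{0,s})}(\Psi(h_j),\Psi(v))=\Psi\bigl(Y(r^{L(0)}h(-j-1)\va,\zeta)s^{L(0)}v\bigr)\in I .
\]
Apply dilations once more to isolate individual $\zeta$-coefficients; alternatively, vary $\zeta$ and integrate against $\zeta^{k}$ on a circle, using the closedness of $I$ in $\Hf^k$ to pass to the limit. This shows that $\Psi(h(n)v)\in I$ for all $n$, up to the fixed nonzero factors $r^{\cdot}s^{\cdot}$. Since $M(0)$ is an irreducible $\fh$-module, a suitable sequence of annihilation operators $h(n)$, $n\ge1$, sends $v$ to a nonzero multiple of $\va$. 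Hence $\va\in I$.

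\textbf{Step 3: generate everything from $\va$.} Once $\va\in I$, the same identity with $v=\va$ yields $\Psi(h(-n)w)\in I$ for all $w\in \Psi^{-1}(I)$ and all $n\ge 1$. Inductively, this shows $\Psi(M(0))\subset I$. Since $\Psi(M(0))\cap \Hf^k$ is dense in $\Hf^k$ (Lemma \ref{lem_Psi}) and $I\cap\Hf^k$ is closed, we conclude $I=\Sym A^2(\bD)$.

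The main obstacle is the coefficient extraction in Step 2. The operations in $\CE(2)$ only allow $\zeta$ with $\overline{B_r(\zeta)}\cap\overline{B_s(0)}=\emptyset$, so the available $\zeta$ form an annulus region. Contour integration over a circle $|\zeta|=c$ within this region therefore requires that the $\Hf^{k}$-valued function $\zeta\mapsto \m(\cdot,\cdot)$ be continuous, hence Bochner integrable. This continuity follows from Proposition \ref{prop_vertex_conv}, since the series converges in norm and depends holomorphically on $\zeta$. Annihilation modes $h(n)$ with $n\ge0$ appear as coefficients of $\zeta^{-n-1}$ in the Laurent expansion, and these can be extracted by integrating over a circle of fixed radius.
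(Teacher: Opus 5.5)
Your argument is correct, but it takes a different route from the paper's proof.

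\textbf{The paper's route.} The paper never works with an ideal of $\Sym A^2(\bD)$ directly. It considers the radical $\{v\in M(0)\mid (\va,Y(a,z)v)_M=0 \text{ for all } a\}$ and observes that it is a vertex-algebra ideal, hence zero because $M(0)$ is simple. Theorem~\ref{thm_identification} transports this to the vanishing of the algebraic two-point null space $N^{\alg}_{\text{2-pt}}(\Sym A^2(\bD))$. The paper then invokes the general criterion \cite[Corollary 3.11]{MLeft}: a vanishing two-point null space forces simplicity.

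\textbf{Your route.} You show directly that every nonzero closed ideal contains $\va$, and then all of $\Psi(M(0))$.
\begin{enumerate}
\item The dilations $\rho_1(B_{0,r})$, which act as $r^{L(0)}$ under $\Psi$, together with closedness of $I\cap\Hf^k$, make $I$ stable under weight projections.
\item Extracting modes over the annulus of admissible $\zeta$ makes $I$ stable under every $h(n)$.
\item The Fock-space structure of $M(0)$ finishes the argument: annihilators reach the vacuum, and creators generate everything.
\end{enumerate}

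\textbf{Comparison.} The paper's proof is shorter and isolates a general principle: simplicity of the VOA plus a comparison theorem suffices. The analytic work is packaged inside the external corollary. Yours is self-contained and makes that analysis explicit, at the cost of relying on the specific Heisenberg structure rather than just simplicity.

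\textbf{Two small points.}
\begin{itemize}
\item Theorem~\ref{thm_identification} concerns $\m^\cN$, not $\m$. You should state that $\cN$ is an isomorphism of $\CE$-algebras $(A,\m)\to(A,\m^\cN)$. It restricts to a bounded bijection with bounded inverse on each $\Hf^k$, so $\cN(I)$ is a closed ideal for $\m^\cN$, and you may work there.
\item The contour integration you flag as the main obstacle can be avoided. For fixed $\zeta$, the coefficients $\zeta^{-n-1}\Psi(h(n)v)$ have pairwise distinct $L(0)$-weights. You can therefore peel them off one at a time, lowest weight first, by the same dilation limit as in Step 1, subtracting each extracted piece before taking the next.
\end{itemize}
The remark about ``$r\to 1$'' in your opening paragraph can simply be dropped, since Step 3 already does that job.
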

\begin{proof}
Set
\begin{align*}
I = \{v \in M(0) \mid (\va, Y(a,z)v )_M =0 \text{ for any }a\in M(0)\}.
\end{align*}
By definition of vertex algebra, it is easy to show that $I$ is an ideal of $M(0)$.
Since $M(0)$ is a simple vertex algebra, $I=0$. By Theorem \ref{thm_identification}, $(\va, Y(a,z)v )_M =0$ if and only if
\begin{align*}
(\va, \rho_{(B_{\zeta,r},B_{0,s})(\Psi(a),\Psi(v))})_A=0.
\end{align*}
Hence, by \cite[Corollary 3.11]{MLeft}, $N_{\text{2-pt}}^\alg(\Sym A^2(\bD))=0$ and $\Sym A^2(\bD)$ is simple.
\end{proof}

In general, for any dimension $d \geq 2$, the algebraic core of a $\CEf_d$-algebra \cite[Definition 1.34]{MLeft} can, under suitable conditions, be endowed with an algebraic structure analogous to a (full) vertex algebra.
Physically, this corresponds to the fact that the conformal field theory admits an operator product expansion and satisfies the bootstrap equations.
We call such a structure a {\it full vertex $d$-algebra}.
The definition of a full vertex $d$-algebra and its relationship with $\CEf_d$-algebras will be discussed in \cite{Mvertex}.

\appendix


\begin{thebibliography}{100}

%\bibitem[ABD]{ABD}
%T.~Abe, G. ~Buhl and C.~Dong, Rationality, regularity, and C2-cofiniteness, Trans. Amer. Math. Soc., 356, 2004, ({\bf8}).
%


 \bibitem[AF1]{AF1}
 {D.~Ayala and J.~Francis},
  {Factorization homology of topological manifolds},
{Journal of Topology}, {\bf 8}, {(4)},
 {1045--1084}, {2015}.

\bibitem[AF2]{AF2}
{D.~Ayala and J.~Francis},
The cobordism hypothesis, arXiv:{1705.02240}.



\bibitem[AGT]{AGT} 
M. S.~Adamo, L.~ Giorgetti, and Y.~Tanimoto, Wightman fields for two dimensional conformal field theories with pointed representation category. Comm. Math. Phys., {\bf404}, (3), 1231--1273, 2023.

\bibitem[AMT]{AMT}
M.S.~Adamo, Y.~Moriwaki and Y.~Tanimoto,
Osterwalder-Schrader axioms for unitary full vertex operator algebras, arXiv:2407.18222.
%
%
%\bibitem[Bar]{Bar}
%D.~Bar-Natan, On associators and the Grothendieck-Teichmuller group. I, Selecta Math. (N.S.), 4, 1998, ({\bf2}), 183--212.
%




\bibitem[Bo1]{B1} R.E.~Borcherds,
Vertex algebras, {K}ac-{M}oody algebras, and the {M}onster,
Proc. Nat. Acad. Sci. U.S.A.,
{\bf83}, 1986, {(10)},
{3068--3071}.



%
%\bibitem[Bu]{Bu}
%G.~Buhl, A spanning set for VOA modules, J. Algebra, 254, 2002, ({\bf1}).
%
\bibitem[Bo2]{B2}
{R.E.~Borcherds},
{Monstrous moonshine and monstrous {L}ie superalgebras},
{Invent. Math.}, {\bf109}, {1992}, {(2)}, 405--444.

\bibitem[Br]{Bru}
{D.~Bruegmann},
{Vertex Algebras and Costello-Gwilliam Factorization Algebras}, {2020},
{arXiv:2012.12214}.


\bibitem[BD]{BD}
{A.~Beilinson and V.~Drinfeld},{Chiral Algebras},
 {American Mathematical Society},
{AMS Colloquium Publications},
 {51}, {2004}.


\bibitem[BKLR]{BKLR}
{M.~Bischoff, Y.~Kawahigashi, R.~Longo and K-H.~Rehren},
{Phase Boundaries in Algebraic Conformal QFT},
{Communications in Mathematical Physics},
{\bf342}, {(1)}, {1--45}, {2016}.





\bibitem[BPZ]{BPZ}
{A. A. ~Belavin, A. M.~Polyakov, A. B.~ Zamolodchikov},
{Infinite conformal symmetry in two-dimensional quantum field theory},
{Nuclear Phys. B}, {\bf241}, {1984}, {(2)}, {333--380}.



%\bibitem[BBP]{BBP}
%T.~Brendle, N.~Broaddus and A.~Putman,
%The mapping class group of connect sums of $S^2 \times S^1$, Trans. Amer. Math. Soc. {\bf376}, 2023, 2557--2572.



\bibitem[CG1]{CG1}
{K.~Costello and O.~Gwilliam},
{Factorization Algebras in Quantum Field Theory. Volume 1},
{New Mathematical Monographs},
{\bf31}, {Cambridge University Press}, {2016}.

\bibitem[CG2]{CG2}
{K.~Costello and O.~Gwilliam},
{Factorization Algebras in Quantum Field Theory. Volume 2},
{New Mathematical Monographs},
{\bf41}, {Cambridge University Press}, {2021}.





%\bibitem[CG]{CG17}
%S.N.~Curry, A.R.~Gover, An Introduction to Conformal Geometry and Tractor Calculus, with a view to Applications in General Relativity, Asymptotic Analysis in General Relativity, London Math Soc Lecture Note Series,  Cambridge University Press, 2018, 86--170.

\bibitem[CKLW]{CKLW}
S.~Carpi, Y.~Kawahigashi, R.~Longo, and M.~Weiner, 
From vertex operator algebras to conformal nets and back, Mem. Amer. Math. Soc., {\bf254}, (1213), 2018.

\bibitem[CN]{CN}
{J. H.~Conway and S. P.~Norton},
{Monstrous Moonshine},
{Bull. London Math. Soc.}, {\bf11},
{(3)}, {308--339}, {1979}.

\bibitem[CS]{CS}
{D.~Calaque and C.~Scheimbauer}, 
{A note on the $(\infty,n)$-category of cobordisms}, {Algebraic \& Geometric Topology}, {\bf19}, {2019}, {533--655}.



%
%\bibitem[CKLR]{CKLR}
%T.~Creutzig, S.~Kanade, A.R.~Linshaw and D.~Ridout,
%Schur-Weyl duality for Heisenberg cosets. Transform. Groups, 24 ({\bf2}) 2019, 301--354.
%
%
%\bibitem[CKM]{CKM22}
%T.~Creutzig, S.~Kanade and R.~McRae. Gluing vertex algebras, Adv. Math., 396:108174, 2022.
%
%
%\bibitem[DM]{DM}
%C.~Dong and G.~Mason, Rational vertex operator algebras and the effective central charge. Int. Math. Res. Not., ({\bf 56}), 2004, 2989--3008.

%
%\bibitem[Dr1]{Dr1}
% {V.G.~Drinfeld},
%{Quasi-{H}opf algebras}, {Algebra i Analiz},
%{\bf 1}, {1989}, {(6)}, {114--148}.
%
%\bibitem[Dr2]{Dr2}
%{V.G.~Drinfeld},
%{On quasitriangular quasi-{H}opf algebras and on a group that
%              is closely connected with {${\rm Gal}(\overline{\bf Q}/{\bf
%              Q})$}}, {Algebra i Analiz},
%{\bf 2}, {1990}, {(4)}, {149--181}.


%\bibitem[Dai]{Dai}
%X.~ Dai, Eta invariant and conformal cobordism,
%Annals of Global Analysis and Geometry 27, 333--340.

\bibitem[DGT]{DGT}
%@article{DamioliniGibneyTarasca2021,
{C.~Damiolini, A.~Gibney and N.~Tarasca},
{Conformal blocks from vertex algebras and their connections on $\overline{\mathcal{M}}_{g,n}$},
{Geometry \& Topology}, {\bf25}, {(5)}, {2021}.

\bibitem[DKRV]{DKRV}
F.~David, A.~Kupiainen, R.~Rhodes, and V.~Vargas,
{Liouville Quantum Gravity on the {R}iemann Sphere},
{Communications in Mathematical Physics}, {342},
 {(3)}, {869--907}, {2016}.

\bibitem[DL]{DL}
C.~Dong and X.~Lin, Unitary vertex operator algebras. J. Algebra, 397, 2014, 252--277.


\bibitem[DLM]{DLM}
{C.~Dong, Z.~Lin, and G.~Mason},
{On vertex operator algebras as {$\mathfrak{sl}_2$}-modules},
{Pacific Journal of Mathematics},
{\bf178}, {1}, {51--77}, {1997}.

%
%\bibitem[EPPRSV]{EPPRSV}
%{S.~El-Showk, M.F.~Paulos, D.~Poland, S.~Rychkov, D.~Simmons-Duffin and A.~Vichi}, {Solving the 3d {I}sing model with the conformal bootstrap
%              {II}. {$c$}-minimization and precise critical exponents}, {J. Stat. Phys.},
%{\bf 157}, {2014}, {(4)-(5)}, {869--914}.

%\bibitem[Fr]{Fr}
%B.~Fresse, Homotopy of operads and Grothendieck-Teichmuller groups. Part 1, Mathematical 
%Surveys and Monographs, 217, American Mathematical Society, Providence, RI, 2017.

\bibitem[FB]{FB}
{E.~Frenkel and D.~Ben-Zvi},
{Vertex algebras and algebraic curves}, {Mathematical Surveys and Monographs}, {\bf88}, {Second},
{American Mathematical Society, Providence, RI}, {2004}.

\bibitem[FLM]{FLM}
I. ~Frenkel, J. ~Lepowsky, and A. ~Meurman, 
{Vertex operator algebras and the {M}onster}, {Pure and Applied Mathematics},
{\bf134}, {Academic Press, Inc., Boston, MA}, {1988}.


%\bibitem[FGG]{FGG}
%{S.~Ferrara and A.F.~ Grillo and R.~Gatto},
%{Tensor representations of conformal algebra and conformally
%covariant operator product expansion}, {Ann. Physics},
%{\bf{76}}, {1973}, {161--188}.
%
\bibitem[FHL]{FHL} I. ~Frenkel, Y. ~Huang and J. ~Lepowsky,
On axiomatic approaches to vertex operator algebras and modules, Mem. Amer. Math. Soc., {\bf104}, 1993, (494).

%\bibitem[FMS]{FMS}
%{P.~Di Francesco, P.~ Mathieu and D.~S\'{e}n\'{e}chal},
%{Conformal field theory}, {Graduate Texts in Contemporary Physics}, {Springer-Verlag, New York}, {1997}.
%
%\bibitem[FR]{FR}
%S.~F\"{o}rste, D.~Roggenkamp, Current-current deformations of conformal field theories, and WZW models, J. High Energy Phys. 5 (2003).

\bibitem[FRS]{FRS}
J.~Fuchs, I.~Runkel and C.~Schweigert, Conformal correlation functions, Frobenius algebras and triangulations, Nucl. Phys. {\bf624}
2002, 452--468.
%
%\bibitem[GKRV]{GKRV1}
%C.~Guillarmou, A.~Kupiainen, R.~Rhodes, and V.~Vargas,
%{Conformal bootstrap in {L}iouville theory},
%{Acta Mathematica}, {\bf233},
% {(1)}, {33--194}, {2024}.
% 
 
\bibitem[GW]{GW}
L.~G{\aa}rding and A.S.~Wightman,
{Fields as Operator-valued Distributions in Relativistic Quantum Theory}, {Arkiv f{\"o}r Fysik},
{\bf28}, {129--189}, {1964}.

 
 
%
%\bibitem[Go]{G}
%{P.~Goddard},
%{Meromorphic conformal field theory},
%{Infinite-dimensional {L}ie algebras and groups ({L}uminy-{M}arseille, 1988)},
%{Adv. Ser. Math. Phys.}, {\bf{7}}, {556--587}, {1989}.
%

%\bibitem[Gi]{GilM}
%O.~Gil-Medrano, On the Yamabe Problem concerning the compact locally conformally flat manifolds,
%Journal of Functional Analysis, {\bf66}, (1), 1986, 42--53.

%\bibitem[Hu1]{H1}
%Y.-Z.~Huang, A theory of tensor products for module categories for a
%vertex operator algebra, IV, J. Pure Appl. Alg., {\bf100}, (1995), 173--216.
%\bibitem[Hu2]{H2}
%Y.-Z.~Huang, Virasoro vertex operator algebras, (nonmeromorphic)
%operator product expansion and the tensor product theory, J. Alg., {\bf182}, (1996), 201--234.
%\bibitem[Hu3]{H3}
%Y.-Z. Huang, Vertex operator algebras and the Verlinde conjecture, Comm. Contemp. Math., {\bf10}, 2008, 103--154.
%\bibitem[Hu4]{H4}
%Y.-Z.~Huang, Rigidity and modularity of vertex tensor categories, Comm. Contemp. Math. {\bf10}, 2008, 871--911.
%
%\bibitem[Hu]{H2} Y.-Z. Huang, Differential equations and intertwining operators, Commun. Contemp. Math., {\bf7}, 2005, (3), 375--400.
%

% 
\bibitem[HK]{HK}
{Y.-Z.~Huang, L.~Kong},
{Full field algebras}, {Comm. Math. Phys.},
{\bf272}, {2007}, {(2)}, {345--396}.

%\bibitem[Ha]{Hatcher}
%A.~Hatcher, On the Diffeomorphism Group of $S^1 \times S^2$, Proc. AMS, Vol. 83, (2), 1981, pp. 427--430.
%
%\bibitem[Hum]{Hum}
%J. E.~Humphreys,
%Introduction to Lie algebras and representation theory, Graduate texts in mathematics, {\bf9}, Springer-Verlag, 1973.

\bibitem[HKZ]{HKZ}
{H.~Hedenmalm, B.~Korenblum, and K.~Zhu},
{Theory of Bergman Spaces}, {Graduate Texts in Mathematics},
{\bf199}, {Springer}, {New York, NY}, {2000}.




%\bibitem[HST]{HST}
%H.~Hohnhold, S.~Stolz and P.~Teichner, From minimal geodesics to supersymmetric field theories, A celebration of the mathematical legacy of Raoul Bott, 207274, CRM Proc. Lecture Notes, 50, Amer. Math. Soc., Providence, RI, 2010.


\bibitem[HT1]{HT1}
{A.G.~Henriques and J.E.~Tener},
{The Segal-Neretin semigroup of annuli}, {arXiv:2410.05929}.

\bibitem[HT2]{HT2}
{A.G.~Henriques and J.E.~Tener},
{Integrating positive energy representations of the Virasoro algebra}, {arXiv:2506.08684}.

\bibitem[Ka]{Kac}
{V.G.~Kac}, {Vertex Algebras for Beginners},
{University Lecture Series},
{\bf10}, {(2)}, {1998}.
 
%\bibitem[HK1]{HK1}
%{Y.-Z.~Huang, L.~Kong},
%{Full field algebras}, {Comm. Math. Phys.},
%{\bf272}, {2007}, {(2)}, {345--396}.
%
%\bibitem[HK2]{HK2}
%{Y.-Z.~Huang, L.~Kong}, Open-string vertex algebra, category and operad. Commun. Math. Phys.,
%{\bf 250}, 2004, 433--471.
% 
% \bibitem[HL]{HL}
%Y.Z. Huang and J. Lepowsky, Tensor products of modules for a vertex operator algebra and vertex
%tensor categories, Progr. Math., 123, 349--383.

%\bibitem[HL1]{HL1}
%Y.-Z.~Huang and J.~Lepowsky, A theory of tensor products for module categories for a vertex operator algebra, I, Selecta Mathematica (New Series), {\bf1}, 1995, 699--756.
%\bibitem[HL2]{HL2}
%Y.-Z.~Huang and J.~Lepowsky, A theory of tensor products for module categories for a vertex operator algebra, II, Selecta Mathematica (New Series), {\bf1}, 1995, 757--786.
%\bibitem[HL3]{HL3}
%Y.-Z.~Huang and J.~Lepowsky, A theory of tensor products for module categories for a vertex operator algebra, III, J. Pure Appl. Alg.,
%{\bf100}, 1995, 141--171.
%
%\bibitem[HM]{HM23}
%G.~H\"{o}hn, S.~M\"{o}ller, Classification of Self-Dual Vertex Operator Superalgebras of Central Charge at Most 24,
%arXiv:2303.17190 [math.QA].


%\bibitem[Id]{Id}
%{N.~Idrissi},{Swiss-cheese operad and {D}rinfeld center},
% {Israel J. Math.}, {\bf221}, {2017},
%{(2)}, {941--972}.
%
%\bibitem[Kui1]{Kuiper}
%N. H.~Kuiper, On conformally flat spaces in the large, Ann. of Math., {\bf 50}, 1949, 916--924.
%
%
%\bibitem[Kui2]{Kuiper2}
%N. H. ~Kuiper, On Compact Conformally Euclidean Spaces of Dimension >2, Ann. of Math.,
%{\bf 52}, (2), 1950, 478--490.

%\bibitem[Kul]{Kul}
%{R. S.~ Kulkarni}, {On the principle of uniformization}, {Journal of Differential Geometry}, {\bf13},
%{1978}, {109--138}.
%%
%
%\bibitem[KS]{KS}
%M.~Kashiwara and P.~Schapira, Categories and Sheaves, Grundlehren der Mathematischen Wissenschaften 332, 
%Springer (2006).

%
%\bibitem[Ko1]{Ko1}
%L.~Kong, Open-closed field algebras, Comm. Math. Phys, {\bf280}, 2008, (1), 207--261.
%%
%\bibitem[Ko2]{Ko2}
%L.~Kong, Full field algebras, operads and tensor categories, Adv. Math, {\bf213}
%(2007) 271--340.
%
%\bibitem[Ko2]{Ko2}
%L.~Kong, Cardy condition for open-closed field algebras, Comm. Math. Phys, {\bf283}, 2008, 25--92.

%
%\bibitem[KL]{KL}
%{D.~Kazhdan and G.~Lusztig},
%{Tensor structures arising from affine {L}ie algebras. {I},
%              {II}}, {J. Amer. Math. Soc.},
%{\bf 6}, {1993}, {(4)}, {905--947, 949--1011}.

%
%\bibitem[KRV]{KRV}
%A.~Kupiainen, R.~Rhodes, and V.~Vargas, 
%{Integrability of {L}iouville theory: proof of the {DOZZ} formula},
%{Annals of Mathematics}, {191},
%{(1)}, {81--166}, {2020}.


%\bibitem[KS1]{KS1}
%{H.~Kajiura and J.~Stasheff}, {Homotopy algebras inspired by classical open-closed string
%              field theory}, {Comm. Math. Phys.}, {\bf263}, {2006}, {(3)}, {553--581}.
%
%\bibitem[KS2]{KS2}
%{H.~Kajiura and J.~Stasheff}, {Open-closed homotopy algebra in mathematical physics}, {J. Math. Phys.},
% {\bf47}, {2006}, {(2)}.
% 
%\bibitem[KS3]{KS3}
%A.~Klimyk and K.~ Schmüdgen, Quantum groups and their representations, Texts and Monographs in Physics, Springer-Verlag, Berlin, (1997)
%
%\bibitem[Lam69]{Lam69}
%J.~Lambek, Deductive systems and categories. {II}. Standard constructions and closed categories,
%{Category {T}heory, {H}omology {T}heory and their
%              {A}pplications, {I} ({B}attelle {I}nstitute {C}onference,
%              {S}eattle, {W}ash., 1968, {V}ol. {O}ne)}, {Lecture Notes in Math.},
%{\bf 86},{1969}, {76--122}.


%
%\bibitem[Li1]{Li}
%H. ~Li, {Symmetric invariant bilinear forms on vertex operator algebras}, {J. Pure Appl. Algebra}, {\bf96}, {1994}, {(3)}, {279--297}.
%
%\bibitem[Li2]{Li2}
% {H.~Li,},
%{Local systems of vertex operators, vertex superalgebras and
%              modules}, {J. Pure Appl. Algebra}, {\bf 109}, {1996}, {(2)}, {143--195}.


\bibitem[Lu1]{Lurie2}
 {J.~Lurie}, {Higher Algebra}, {online version}, {December 22, 2025}.
 
 \bibitem[Lu2]{Lurie3}
 {J.~Lurie}, {On the Classification of Topological Field Theories},
{Current Developments in Mathematics, 2008}, {129--280},
 {2009}, {International Press of Boston}.

%
%\bibitem[Lu]{Lu2}
%{G.~Lusztig},
%{Monodromic systems on affine flag manifolds},
%{Proc. Roy. Soc. London Ser. A},
%{\bf 445}, {1994}, {(1923)}, {231--246};
%Erratum, {Proc. Roy. Soc. London Ser. A}, {\bf 450}, 1995, 731-732.


%\bibitem[La1]{Laudenbach}
%F.~Laudenbach, Sur les 2-sph\`eres d’une vari\'et\'e de dimension 3, Ann. of Math. (2) 97 (1973),
%57--81.
%\bibitem[La2]{Laudenbach2}
%F.~Laudenbach, Topologie de la dimension trois: homotopie et isotopie, Soci\'et\'e Math\'ematique
%de France, Paris, 1974.

%\bibitem[Lio]{Liouville}
%J.~Liouville, Extension au cas des trois dimensions de la questions du trace geographic, in Application de l'Anlyse a la Geometrie, Paris, 1850, 609--616. 

\bibitem[Li]{Li}
H. ~Li, {Symmetric invariant bilinear forms on vertex operator algebras}, {J. Pure Appl. Algebra}, {\bf96}, {1994}, {(3)}, {279--297}.

\bibitem[LL]{LL}
J. ~Lepowsky and H. ~Li, {Introduction to vertex operator algebras and their representations}, {Progress in Mathematics},
{\bf227}, {Birkh\"{a}user Boston, Inc., Boston, MA}, {2004}.

\bibitem[LM]{LM}
M.~L\"{u}scher and G.~Mack,
Global conformal invariance in quantum field theory, Comm. Math. Phys., 41, 203--234, 1975.

%\bibitem[LS]{LS}
%C. ~Lam and H. ~Shimakura, {Quadratic spaces and holomorphic framed vertex operator algebras of central charge 24}, {Proc. Lond. Math. Soc. (3)}, {\bf{104}}, {2012}, {(3)}, {540--576}.

%\bibitem[Ma]{Ma} 
%J.~Maldacena, {The large {$N$} limit of superconformal field theories and
%supergravity}, {Adv. Theor. Math. Phys.}, 
%{\bf2}, {1998}, {2}, {231--252}.
%
%\bibitem[Mac]{Mac} 
%S.~MacLane, Categories for the working mathematician, Graduate Texts in Mathematics, Vol. 5, Springer-Verlag, New York-Berlin, 1971, ix+262.
%
%\bibitem[Mas]{Mas} 
%G.~Mason, Lattice subalgebras of strongly regular vertex operator algebras,
%Conformal Field Theory, Automorphic Forms and Related Topics, Springer, 2014, 31--53 (arXiv:1110.0544v1
%[math.QA]).
%
%\bibitem[MMS]{MMS} 
%{M.~Markl, S.~ Shnider, and J.~Stasheff},
%{Operads in algebra, topology and physics},
%{Mathematical Surveys and Monographs},{\bf96}, {2002}, {x+349}.
%
%\bibitem[Mi1]{Mi1} 
%A.~Milas, Weak modules and logarithmic intertwining operators for vertex operator algebras, Contemp. Math., 297, 201--225.
%\bibitem[Mi2]{Mi2} 
%A.~ Milas, Logarithmic intertwining operators and vertex operators, Comm. Math. Phys., 277, 2008,
%({\bf2}), 497--529.


%\bibitem[Ma]{Matsumoto}
%S.~Matsumoto, Foundations of Flat Conformal Structure,
%Adv. Stud. Pure Math., {\bf 20}, Aspects of Low Dimensional Manifolds, 1992, 167--261.

\bibitem[Mo1]{M1} 
Y.~Moriwaki, Two-dimensional conformal field theory, full vertex algebra and current-current deformation, Adv. Math, 427, (2023).
%
%\bibitem[Mo2]{M2} Y.~Moriwaki, Full vertex algebra and bootstrap -- consistency of four point functions in 2d CFT,
%arXiv:2006.15859 [math.QA].
%
%\bibitem[Mo4]{M4} Y.~Moriwaki, Code conformal field theory and framed algebra, arXiv:2104.10094.
%
%\bibitem[Mo5]{M5}
%Y.~Moriwaki, 
%{Quantum coordinate ring in {WZW} model and affine vertex
%              algebra extensions}, {Selecta Math. (N.S.)}, {\bf 28},
%{2022}, {(4)}, {Paper No. 68, 49}.
%
%\bibitem[Mo2]{M6}
%Y.~Moriwaki, 
%{Vertex operator algebra and parenthesized braid operad}, arXiv:2209.10443.
%
%\bibitem[Mo2]{M8}
%Y.~Moriwaki, 
%{Consistency of operator product expansions of Boundary 2d CFT and Swiss-Cheese operad}, arXiv:2410.02648.

\bibitem[Mo2]{MLeft}
Y.~Moriwaki, Conformally flat factorization homology in Ind-Hilbert spaces and Conformal field theory, arXiv:2602.08729.


\bibitem[Mo3]{Mfactorization}
Y.~Moriwaki, 
Prefactorization algebras for the conformal Laplacian: Central charge and Hilbert Fock space, arXiv:2602.17549.

\bibitem[Mo4]{Mvertex}
Y. Moriwaki, On functorial conformal field theory and higher dimensional full vertex algebras, to appear.





%\bibitem[Mo7]{M7} 
%Y.~Moriwaki, Genus of vertex algebras and mass formula, Math. Z., {\bf 299}, (2021), 1473 -- 1505.
%
%\bibitem[MS1]{MS1}
%G. ~Moore and N. ~Seiberg, Polynomial equations for rational conformal field theories, Phys. Lett. {\bf 212}, 1988, 451--460.
\bibitem[MS]{MS}
G. ~Moore and N. ~Seiberg, Classical and quantum conformal field theory, Comm. Math. Phys. {\bf 123}, 1989, 177--254.

\bibitem[Ni]{Nis}
Y.~Nishinaka, Factorization envelopes and enveloping vertex algebras, arXiv:2512.07635.

%
%\bibitem[NT]{NT}
%K.~ Nagatomo and A.~Tsuchiya, Conformal field theories associated to regular chiral vertex operator
%algebras. I. Theories over the projective line, Duke Math. J., 128, 2005, ({\bf 3}), 393--471.
%
%\bibitem[OS1]{OS1}
% {K.~Osterwalder and R.~Schrader},
%{Axioms for {E}uclidean {G}reen's functions},
%{Comm. Math. Phys.}, {\bf 31}, {1973}, {83--112}.
%\bibitem[OS2]{OS2}
% {K.~Osterwalder and R.~Schrader},
%Axioms for Euclidean Green’s functions. II. Comm. Math. Phys., {\bf42}, 1975, 281--305.

%\bibitem[PS]{PS}
%{M. E. ~Peskin and D. V.~Schroeder},
%{An introduction to quantum field theory},
%{Addison-Wesley Publishing Company, Advanced Book Program, Reading, MA}, {1995}.

%\bibitem[Ploc]{Polc}
%J. Polchinski, String theory. Vol. I, Cambridge Monographs on Mathematical Physics, Cambridge University Press, Cambridge, 1998.

%\bibitem[Sc]{Sch08}
%M.~Schottenloher, A mathematical introduction to conformal field theory, second ed., Lecture Notes in Physics, {\bf759}, Springer-Verlag, Berlin, 2008.


\bibitem[RSS]{RSS}
%@article{RadnellSchippersStaubach2010,
{D.~Radnell, E.~Schippers and W.~Staubach},
{A Hilbert manifold structure on the Weil–Petersson class Teichmüller space of bordered Riemann surfaces},
{Communications in Contemporary Mathematics},
{\bf17}, {(4)}, {2015}.
%
%{The {T}eichm\"uller space of a bordered {R}iemann surface as a complex {H}ilbert manifold},
%{Journal of Functional Analysis},
%{\bf259}, {(10)}, {2653--2674}, {2010}.


\bibitem[Sc]{Sc}
{C.I.~Scheimbauer},
{Factorization Homology as a Fully Extended Topological Field Theory},
{2014}, {ETH Z{\"u}rich}, {Doctoral Thesis (Dr.\ sc.\ ETH Zurich)}.

\bibitem[Se]{Segal}
G.~Segal, The definition of conformal field theory. Topology, geometry and quantum field theory, 421--577, London Math. Soc. Lecture Note Ser., 308, Cambridge Univ. Press, Cambridge, 2004.

%\bibitem[Sh]{Shen}
%Y.~Shen, On Grunsky operator, Sci. China Ser., A 50(12), 1805--1817 (2007).
%
%\bibitem[SW]{SW}
%E. M.~Stein and G.~Weiss, Introduction to Fourier Analysis on Euclidean Spaces, Princeton Mathematical Series, {\bf32}, Princeton University Press, 1971.

\bibitem[SS]{SS}
{E.~Schippers and W.~Staubach},
{Weil--Petersson Teichm\"uller Theory of Surfaces of Infinite Conformal Type},
{In the Tradition of Thurston III}, {Springer}, {2024}, {169--247}.

%\bibitem[ST]{ST}
%S.~Stolz and P.~Teichner, Supersymmetric field theories and generalized cohomology, Mathematical foundations of quantum field theory and perturbative string theory., {\bf83}, Proc. Sympos. Pure Math. Amer. Math. Soc., Providence, RI, 2011, 279--340.


\bibitem[TT]{TT}
{L.A.~Takhtajan and L-P.~Teo},
{Weil-Petersson Metric on the Universal Teichm{\"u}ller Space}, {Memoirs of the American Mathematical Society},
 {183}, {861}, {2006}.
 
 
 \bibitem[TUY]{TUY}
{A.~Tsuchiya and K.~Ueno and Y.~Yamada},
{Conformal field theory on universal family of stable curves with gauge symmetries},
{Advanced Studies in Pure Mathematics}, {\bf19},
 {459--566}, {1989}.


%
%\bibitem[Ta]{Ta}
%D. E. Tamarkin, Formality of chain operad of little discs, Lett. Math. Phys., 66, 2003, ({\bf1-2}), 65--72.

%
%\bibitem[RRTV]{RRTV}
%R.~Rattazzi, V.S.~Rychkov, E.~Tonni and A.~Vichi,
%{Bounding scalar operator dimensions in 4{D} {CFT}},
%{J. High Energy Phys.}, {2008}, {\bf12}.
%
%
\bibitem[Vi]{Vic}
%@article{Vicedo2026FullUEVA,
{B.~Vicedo}, {Full universal enveloping vertex algebras from factorisation}, {Annales Henri Poincar\'e}, {2026}.


%\bibitem[Wh1]{Wh1}
%J. H. C.~Whitehead, On Certain Sets of Elements in a Free Group, Proc. London Math. Soc., (2) 41, 1936, no. 1, 48--56.
%\bibitem[Wh2]{Wh2}
%J. H. C.~Whitehead, On equivalent sets of elements in a free group, Ann. of Math. (2) 37, 1936, 4, 782--800.

%\bibitem[We]{We}
%{S.~Weinberg}, {The quantum theory of fields. {V}ol. {II}}, {Modern applications}, {Cambridge University Press, Cambridge}, {1996}.
%
%\bibitem[ZZ]{ZZ}
%{A.~Zamolodchikov and Al.~Zamolodchikov},
%{Conformal bootstrap in {L}iouville field theory},
%{Nuclear Phys. B}, {\bf477}, {1996}, {(2)}, {577--605}.
%


\bibitem[Zh]{Zh}
{Y.~Zhu}, {Modular invariance of characters of vertex operator algebras},
{J. Amer. Math. Soc.}, {\bf9}, {1996}, {(1)}, {237--302}.
\end{thebibliography}
\end{document}